\let\@wraptoccontribs\wraptoccontribs
\renewcommand*\env@matrix[1][*\c@MaxMatrixCols c]{%
  \hskip -\arraycolsep
  \let\@ifnextchar\new@ifnextchar
  \array{#1}}
\def\Gal{\mathop{\rm Gal}\nolimits}
\def\CVD{{\hfill\hfil{\lower 2pt\hbox{\vrule\vbox to 7pt
{\hrule width  5pt\varphifill\hrule}\varphirule}}}\par}
\DeclareMathOperator{\PrePer}{PrePer}
\DeclareMathOperator{\Per}{Per}
\newcommand{\mysetminus}{\mathbin{\fgebackslash}}
\newtheorem{theorem}{Theorem}[section]
\newtheorem{lemma}[theorem]{Lemma}
\newtheorem{proposition}[theorem]{Proposition}
\newtheorem{proposition-definition}[theorem]{Proposition-Definition}
\theoremstyle{definition}
\theoremstyle{remark}
\newtheorem{remark}{Remark}
\theoremstyle{theorem}
\theoremstyle{remark}
\title[Irreducible Polynomials in semigroups]{Irreducible polynomials in quadratic semigroups}    
\begin{document}
\author{Wade Hindes, Reiyah Jacobs, and Peter Ye}  
\maketitle

\begin{abstract} We construct many irreducible polynomials within semigroups generated by sets of the form $S=\{x^2+c_1,\dots,x^2+c_s\}$ under composition.  
\end{abstract} 
\section{Introduction}
Let $K$ be a field and let $S=\{f_1,\dots, f_s\}$ be a set of polynomials with $f_i\in K[x]$. Then one can form the semigroup $M_S$ generated by $S$ under composition, i.e., the set of all polynomials of the form $f=\theta_1\circ\dots\circ\theta_n$ for some $n\geq1$ and some $\theta_i\in S$. These semigroups have gained increased interest in both dynamical systems and number theory in recent years, as they naturally generalize the process of iterating a single function; see \cite{Tits,Cabrera,DoyleFiliTobin,Me:1stCount,Mello,Pakovich1,Pakovich2} for results relating the structure of $M_S$ to preperiodic points, invariant probability measures, and height functions. In addition and of particular interest for this paper, the semigroup $M_S$ can be used to construct arboreal representations of Galois groups, i.e., Galois groups that arise as subgroups of the automorphism group of a tree; see \cite{Odoni,Cubics,FeragutiPagano,Jones, Stoll} for results in the case of iterating a single function and see \cite{MeVivRafe,Me:LeftRightTotal} for recent results in the more general situation. However, nearly all of the known results on arboreal representations rely on the underlying polynomials in the semigroup being irreducible (a natural assumption in Galois theory in general). Nevertheless, this can be quite tricky to prove in the general semigroup situation. In this paper, we make some progress on this problem in the case when $M_S$ is generated by quadratic polynomials of the form $x^2+c$ with integral coefficients. In particular, we show that if $S$ contains at least two such irreducible quadratics, then many polynomials in $M_S$ are irreducible in $\mathbb{Q}[x]$. For simplicity here and throughout (unless explicitly stated otherwise), irreducible means irreducible in $\mathbb{Q}[x]$. Moreover, the superscript $N$ below in $\phi^N$ denotes iteration.

 \vspace{.15cm}  

\begin{theorem}\label{thm:main} Let $c_1,\dots, c_s$ be distinct integers, let $S=\{x^2+c_1,\dots,x^2+c_s\}$, and let $M_S$ be the semigroup generated by $S$ under composition. Moreover, assume that $S$ contains at least two irreducible polynomials. Then there exist $N\geq2$ and $\phi_i,\phi_j\in S$ such that \vspace{.05cm} 
\[\{\phi_i^N\circ\phi_j\circ \phi_i\circ F\,:\, F\in M_S\} \vspace{.05cm} \]
is a set of irreducible polynomials.\vspace{.25cm}    
\end{theorem}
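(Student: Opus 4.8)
The plan is to fix the two required irreducible polynomials as $\phi_i = x^2+a$ and $\phi_j = x^2+b$ (so $-a$ and $-b$ are non-squares, and in particular $a,b\neq 0$), to abbreviate the fixed prefix word as $W=\phi_i^N\circ\phi_j\circ\phi_i$, and to analyze $\Phi_F=W\circ F$ via Capelli's lemma. Recall that over a field $k$ the polynomial $\phi_c\circ H=H^2+c$ is irreducible exactly when $-c$ is a non-square in $k$ and $H(x)-\sqrt{-c}$ is irreducible over $k(\sqrt{-c})$. Peeling $\Phi_F$ from the outside one factor at a time produces a tower of quadratic extensions $\mathbb{Q}=K_0\subset K_1\subset\cdots\subset K_{N+2}$, whose very existence encodes the non-square conditions contributed by the prefix $W$, and leaves a single bottom requirement: that $F(x)-\theta$ be irreducible over $K_{N+2}=\mathbb{Q}(\theta)$, where $\theta$ is the algebraic number produced at the end of the peeling. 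A short chase of the defining relations $\phi_i(\theta_1)=0$, $\phi_i(\theta_m)=\theta_{m-1}$, $\phi_j(\theta_{N+1})=\theta_N$ shows $W(\theta)=0$, i.e. $\theta$ is a root of $W$ itself. Thus the problem splits cleanly into (i) the $F$-free statement that $W$ is irreducible over $\mathbb{Q}$, and (ii) the statement that $F(x)-\theta$ is irreducible over $\mathbb{Q}(\theta)$ for every $F\in M_S$.

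For part (i) I would choose $a$, $b$, and $N$ so that $W$ is irreducible, which is the usual stability-type assertion that the critical-orbit values of $W$ are non-squares. The potentially infinite list of conditions coming from the $N$ trailing copies of $\phi_i$ is dispatched by a sandwiching estimate: once $|v|>(|a|+1)/2$, the integer $v^2+a$ lies strictly in the open interval $\bigl((|v|-1)^2,(|v|+1)^2\bigr)$ and differs from $|v|^2$, hence is a non-square; since the relevant values only grow under $x\mapsto x^2+a$, all but finitely many conditions hold automatically, leaving a finite verification to be arranged by the choice of generators and of $N$.

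Part (ii) is where uniformity over all $F$ must come from, and I expect it to be the crux. The key structural input is that every $F\in M_S$ collapses modulo $2$: since $x^2+c\equiv(x+\bar c)^2$ in $\mathbb{F}_2[x]$, one has $F\equiv(x+\sigma_F)^{\deg F}\pmod 2$ for some $\sigma_F\in\{0,1\}$. Fixing a prime $\mathfrak p\mid 2$ of $\mathbb{Q}(\theta)$ and translating $x$ by $\sigma_F$, the polynomial $F(x)-\theta$ acquires constant term $F(\sigma_F)-\theta$; crucially, when $\theta\in\mathfrak p$ one has $F(\sigma_F)\equiv\sigma_F^{\deg F}+\sigma_F\equiv 0\pmod 2$, so $F(\sigma_F)$ is even \emph{for every} $F$. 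Hence, provided $\theta$ has odd $\mathfrak p$-adic valuation strictly below $v_{\mathfrak p}(2)$, the term $\theta$ dominates the even integer $F(\sigma_F)$, the Newton polygon of $F(x)-\theta$ at $\mathfrak p$ is a single segment of slope with odd numerator and denominator $\deg F$, and $F(x)-\theta$ is irreducible over $\mathbb{Q}(\theta)$ — uniformly in $F$, as required.

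The hard part will therefore be the purely local analysis at $2$: I must choose $\phi_i$, $\phi_j$, and $N$ so that $\theta$ has odd valuation, below the ramification, at some prime above $2$. Because $\theta$ is a root of $W$ we have $N_{\mathbb{Q}(\theta)/\mathbb{Q}}(\theta)=\pm W(0)$, so this valuation is governed by $v_2(W(0))$ together with the splitting of $2$ in the tower; the natural target is to make $2$ totally ramified in $\mathbb{Q}(\theta)$ — equivalently, that every radicand occurring in the tower has odd valuation at $2$, which simultaneously secures the non-square conditions of part (i) — and to force $v_2(W(0))$ odd by a congruence analysis of the critical value. I anticipate that carrying this out for all admissible $S$, including the awkward cases where both available irreducible generators are odd or $\equiv 3\pmod 4$, will demand a careful case analysis modulo $8$, and that this is exactly the purpose of the inserted factor $\phi_j$ (supplying a second constant $b\neq a$ to break bad valuation patterns) and of taking $N$ large.
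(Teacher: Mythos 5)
Your Capelli reduction is sound: $W\circ F$ is irreducible iff $W$ is irreducible over $\mathbb{Q}$ and $F(x)-\theta$ is irreducible over $\mathbb{Q}(\theta)$ for a root $\theta$ of $W$, and the Newton-polygon criterion you state for part (ii) (all interior coefficients of $F(x+\sigma_F)$ even, constant term of valuation $v_{\mathfrak p}(\theta)$ odd and below $v_{\mathfrak p}(2)$) would indeed give irreducibility uniformly in $F$. But the entire weight of the proof rests on the step you defer --- producing, for \emph{every} admissible $S$, a choice of ordering and of $N$ for which some prime $\mathfrak p\mid 2$ of $\mathbb{Q}(\theta)$ satisfies $0<v_{\mathfrak p}(\theta)<e_{\mathfrak p}$ with $v_{\mathfrak p}(\theta)$ odd --- and this is not merely unfinished: it is unachievable in the form you propose for explicit families. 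Since $\sum_{\mathfrak p\mid 2}f_{\mathfrak p}v_{\mathfrak p}(\theta)=v_2(W(0))$ and $W(0)=\phi_i^N(\phi_j(c_i))$, take $S=\{x^2+3,\,x^2+11\}$ (both irreducible): for either ordering the orbit alternates parity, and when the value is even it is $\equiv 1+c_i\equiv 4\pmod 8$, so $v_2(W(0))\in\{0,2\}$ for every ordering and every $N$. Your stated target of making $v_2(W(0))$ odd is therefore impossible here, and the fallback (an odd $v_{\mathfrak p}(\theta)$ extracted from an even norm valuation via ramification) would require controlling the splitting of $2$ through an $(N{+}2)$-story tower, which you have not attempted and which also fails outright when the top radicand $-c_i\equiv 1\pmod 8$ makes the first layer split at $2$. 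A purely $2$-adic sufficient condition cannot carry this theorem for arbitrary integers $c_i$.

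There is a second, related gap in your part (i). The sandwiching estimate handles large orbit values, but the dangerous case is precisely when $\phi_j(\phi_i(0))$ (or later, $\phi_j(\phi_i(b))$) is \emph{preperiodic} for $\phi_i$ and falls into a cycle containing a square: then $\phi_i^N(\cdot)$ is that square for infinitely many $N$, and no choice of $N$ rescues you; moreover you do not get to "arrange the generators," since $S$ is given. This is exactly where the paper's global machinery enters: Lemma 4.1 uses Siegel's theorem on the genus-one curve $Y^2=\phi^2(X)$ plus canonical heights to show that a square value $\phi^N(a)$ for large $N$ forces $a$ preperiodic and forces $\phi$ to have a square \emph{periodic} point; Proposition 2.2 classifies such $\phi$ as $c=s^2-s^4$ or $c=-1-s^2-s^4$ and pins down $\PrePer(\phi,\mathbb{Q})$; and Section 4 determines the integral points on roughly $48$ curves and systems to show that outside the pairs $(-1,-3)$ and $(s^2-s^4,-1-s^2-s^4)$ the obstruction cannot occur in both orderings simultaneously (the exceptional pairs being handled by the extra inserted $\phi_i$). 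The existence of these exceptional pairs, parametrized by integral points rather than by congruence classes, is strong evidence that the obstruction to irreducibility here is genuinely global and cannot be detected by residues modulo powers of $2$.
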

\begin{remark} Let $\nu$ be probability measure on $S=\{x^2+c_1,\dots, x^2+c_s\}$, and and assume that $\nu(\phi)>0$ for all $\phi\in S$ (so that no maps have probability zero). Now extend $\nu$ to a probability measure $\bar{\nu}$ on the set $\Phi_S:=\{\gamma=(\theta_1,\theta_2, \dots)\;\vert\,\theta_n\in S\}$ of infinite sequences of elements of $S$ via the product measure. Then one can attach to each infinite sequence $\gamma$ an arboreal representation $\rho_\gamma: \Gal(\overline{\mathbb{Q}}/\mathbb{Q})\rightarrow\Aut (T_{\gamma,0})$, where $T_{\gamma,0}$ is the infinite, regular, $2$-ary rooted tree of preimages of $0$ under the sequence of maps defining $\gamma$; see \cite{Feraguti,MeVivRafe} for details. Then, it follows from Theorem \ref{thm:main} above that if the $c_i$ are integral and $S$ contains at least $2$ irreducible maps, then an arboreal representation acts transitively with \emph{positive probability},
\[\bar{\nu}\big(\{\gamma\in\Phi_S\,:\,\text{$\rho_\gamma\big( \Gal(\overline{\mathbb{Q}}/\mathbb{Q})\big)$ acts transitively on $T_{\gamma,0}$} \}\big)>0.\]
Equivalently, a random sequence $\gamma\in\Phi_S$ is \emph{stable} over $\mathbb{Q}$ (i.e., for a sequence $\gamma=(\theta_m)_{m\geq1}$, the polynomials $\theta_1\circ\dots \circ\theta_m$ are irreducible for all $m\geq1$) with positive probability \cite{Feraguti,MeVivRafe}.             
\end{remark} 
\begin{remark} It may be that having one irreducible polynomial is enough to force stability with positive probability. However, if one uses the the standard irreducibility test for iterative quadratic sequences (looking for squares in the adjusted critical orbit, see Proposition \ref{prop:stability} below), then the theorem above is optimal: when $S=\{x^2-4,x^2-12\}$,  \emph{every} infinite sequence in $S$ except for the constant sequence $(x^2-12, x^2-12,\dots)$, has a square somewhere in its adjusted critical orbit. Therefore, the stability test in Proposition \ref{prop:stability} fails with probability one in this case. \vspace{.25cm}    
\end{remark} 

The main tool we use to establish the existence of many irreducible polynomials in semigroups generated by quadratics is the following classification theorem, which roughly says that if a pair of maps $(\phi_1,\phi_2)=(x^2+c_1,x^2+c_2)$ has the property that there exist large iterates $N_1$ and $N_2$ and points $b_1,b_2\in\mathbb{Z}$ such that $\phi^{N_1}(\phi_2(b_1))$ and $\phi_2^{N_2}(\phi_1(b_2))$ are \emph{both} rational squares, then $(\phi_1,\phi_2)$ is of a very special form; see Lemma \ref{lem:Per-square} and Proposition \ref{prop:nosquares} to see why this formulation is equivalent to the statement below and see Proposition \ref{prop:stability} for the salience of squares in this context. In what follows, given a polynomial $\phi\in\mathbb{Q}[x]$, we let $\Per(\phi,\mathbb{Q})$ denote the set of rational periodic points of $\phi$, i.e, the set of all $P\in\mathbb{Q}$ such that $\phi^n(P)=P$ for some $n\geq1$. Likewise, we let $\PrePer(\phi,\mathbb{Q})$ denote the set of rational preperiodic points for $\phi$, i.e., the set of all $P\in\mathbb{Q}$ such that $\phi^n(P)\in\Per(\phi,\mathbb{Q})$ for some $n\geq1$. Then we have the following classification theorem leading to irreducible polynomials in semigroups: \vspace{.2cm}          

\begin{theorem}\label{thm:classification} Let $\phi_1=x^2+c_1$ and $\phi_2=x^2+c_2$ for some distinct integers $c_i\in\mathbb{Z}$. Moreover, assume the following conditions hold: \vspace{.2cm}  
\begin{enumerate} 
\item[\textup{(1)}] Both $\phi_1$ and $\phi_2$ have periodic points that are rational squares. \vspace{.2cm}  
\item[\textup{(2)}] Both $\phi_1(\mathbb{Z})\cap\PrePer(\phi_2,\mathbb{Q})$ and $\phi_2(\mathbb{Z})\cap\PrePer(\phi_1,\mathbb{Q})$ are non-empty. \vspace{.2cm}    
\end{enumerate} 
Then, up to reordering, either 
\[(c_1,c_2)=(-1,-3)\;\;\;\text{or}\;\;\; (c_1,c_2)=(s^2-s^4,-1-s^2-s^4)\] 
for some $s\in\mathbb{Z}$. \vspace{.2cm}   
\end{theorem}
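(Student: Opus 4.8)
The plan is to convert both hypotheses into explicit Diophantine constraints on $(c_1,c_2)$ and then run a ``squeeze between consecutive squares'' argument that forces the two parameters attached to $\phi_1$ and $\phi_2$ to coincide.

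First I would use condition~(1) together with Lemma~\ref{lem:Per-square}, which classifies the $c$ for which $x^2+c$ has a rational square periodic point, to show that each $c_i$ has exactly one of two shapes: either $c_i=a^2-a^4$ for some integer $a\ge 0$, in which case $a^2$ is a \emph{fixed} square point, or $c_i=-1-b^2-b^4$ for some integer $b\ge 0$, in which case $b^2$ lies in a \emph{$2$-cycle}, namely $b^2\leftrightarrow-(b^2+1)$. Call these the fixed type and the $2$-cycle type. Next I would compute $\PrePer(\phi_i,\mathbb{Q})$ explicitly in each type: every rational preimage is obtained by solving $x^2=(\text{point})-c_i$, which is a perfect square only finitely often, so (away from a few small values of the parameter)
\[
\PrePer(\phi,\mathbb{Q})=\{\pm a^2,\ \pm(a^2-1)\}
\quad\text{(fixed type)},\qquad
\PrePer(\phi,\mathbb{Q})=\{\pm b^2,\ \pm(b^2+1)\}
\quad\text{(}2\text{-cycle type)}.
\]
In particular every rational preperiodic point of a fixed-type map has absolute value at most $a^2$, and of a $2$-cycle-type map at most $b^2+1$; these size bounds are the crucial inputs.

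I would then read condition~(2) as the existence of integers $u,v$ with $\phi_1(u)\in\PrePer(\phi_2,\mathbb{Q})$ and $\phi_2(v)\in\PrePer(\phi_1,\mathbb{Q})$. Since $c_2\approx-(\mathrm{param}_2)^4$ while every element of $\PrePer(\phi_1,\mathbb{Q})$ is bounded by $(\mathrm{param}_1)^2$, the quantity $v^2=\phi_2(v)-c_2$ is trapped in a short window lying between consecutive squares $(m^2)^2$ and $(m^2+1)^2$, where $m=\mathrm{param}_2$; being a perfect square, it must equal one of the (at most two) squares in that window. That equality translates immediately into a relation between $\mathrm{param}_1$ and $\mathrm{param}_2$. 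Running the same squeeze with $\phi_1,\phi_2$ interchanged (using $u^2=\phi_1(u)-c_1$) covers the opposite size regime. Carrying this out in the three cases fixed/fixed, fixed/$2$-cycle, and $2$-cycle/$2$-cycle should show that the two parameters must be equal; in the fixed/fixed and $2$-cycle/$2$-cycle cases equal parameters give $c_1=c_2$, which is excluded by distinctness, the sole escape being the exceptional solution $(a,b)=(0,1)$ in the $2$-cycle/$2$-cycle case, which yields $(c_1,c_2)=(-1,-3)$. In the fixed/$2$-cycle case equal parameters $a=b=s$ produce precisely the family $(s^2-s^4,\,-1-s^2-s^4)$, and reordering accounts for the $2$-cycle/fixed case.

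The main obstacle I anticipate is the second step rather than the squeeze itself, which is elementary once the sets are known: one must prove that the four listed points are genuinely \emph{all} of the rational preperiodic points, uniformly in the parameter and including the finitely many degenerate small cases (where listed points collide, or an unexpected preimage appears because $a^4-1$ or $b^4+1$ happens to be a perfect square). If $\PrePer$ were any larger, the size bounds underpinning the squeeze would break. I would therefore handle the small parameters by direct computation and use the non-existence of rational cycles of period $\ge 3$ for $x^2+c$ to exclude extra cycles, leaving only the controlled growth of the preimage tree to verify.
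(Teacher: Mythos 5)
Your strategy is the same as the paper's: Proposition \ref{prop:portraits} (not Lemma \ref{lem:Per-square}, which is a height argument --- a minor misattribution) supplies exactly the two shapes $c=s^2-s^4$ and $c=-1-s^2-s^4$ together with the explicit four-element sets $\PrePer(\phi,\mathbb{Q})$, condition (2) is then translated into pairs of quartic--square Diophantine equations in the two parameters, and the core tool is the same ``trapped between consecutive squares'' (Runge-type) estimate that forces $s^2=t^2$ outside of boundary cases. So the skeleton is right and the final accounting of where $(-1,-3)$ and the family $(s^2-s^4,-1-s^2-s^4)$ come from is essentially correct.

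The genuine gap is in your claim that, once $\PrePer$ is known, ``the squeeze itself\dots is elementary'' and only ``finitely many degenerate small cases'' remain to be checked by hand. Three pieces of residual work are not covered by that plan. First, the boundary sub-cases $s=0$ or $t=0$ are \emph{not} finite checks: they leave the other parameter free and reduce to finding all integral points on genus-one quartics such as $y^2=t^4+t^2+1$, $y^2=t^4-t^2+1$ and $y^2=t^4+t^2+2$, which the paper settles by exhibiting birational rank-zero elliptic curves and computing their Mordell--Weil groups (see, e.g., Lemmas \ref{lem:case1.3}, \ref{lem:case2.1}, \ref{lem:case2.3}). Second, even after the squeeze forces $s^2=t^2$, one is left with one-parameter equations $y^2=t^4\pm1$, $y^2=t^4+2$, $x^2=s^4+1$ and $x^2=s^2(s^2-2)$, again handled by rank-zero curves or congruences mod $4$ (Lemmas \ref{lem:case1.4}, \ref{lem:case2.9}, \ref{lem:case3.2}); these are precisely the relations your squeeze produces, and they do not resolve themselves. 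Third, several sign combinations (e.g., $x^2+s^2-s^4=-1-t^2$ in Lemmas \ref{lem:case2.13}--\ref{lem:case2.16}, and Lemmas \ref{lem:case3.3}, \ref{lem:case3.9}, \ref{lem:case3.11}, \ref{lem:case3.12}, \ref{lem:case3.15}) are not cleanly amenable to the squeeze at all and are instead eliminated by reduction modulo $4$ or $8$. None of this invalidates your approach --- it is exactly what the paper does --- but your proposal as written stops short of the arithmetic that actually closes out roughly half of the $48$ cases, and the ``direct computation of small parameters'' you allot for it would not suffice.
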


\begin{remark} We call a pair of polynomials $\phi_1$ and $\phi_2$ satisfying properties (1) and (2) in Theorem \ref{thm:classification} above \emph{exceptional pairs}. The proof of the classification of exceptional pairs may be found in section \ref{sec:exceptionalpairs}. The argument involves mixing dynamical techniques (e.g., explicit characterizations of periodic points for quadratic polynomials) with arithmetic techniques for finding all integral and rational points on many curves and surfaces (via Runge's method, analyzing Mordell Weil groups of Jacobians, etc.).  \vspace{.2cm}       
\end{remark}  

\textbf{Acknowledgements:} We thank Rafe Jones and Michael Stoll for conversations related to the work in this paper. We also thank the Mathworks summer program at Texas State University for supporting this collaboration. Finally, the first author thanks MSRI for their support during the spring of 2022, when this project began.  

\medskip

\section{Background: arithmetic with quadratic sequences}\label{sec:background}

The first thing we need is a convenient irreducibility test for iterates. In what follows, let $K$ be a field of characteristic not $2$, let $S=\{x^2+c_1,\dots, x^2+c_s\}$ be a set of quadratic polynomials (with common critical point zero) for some $c_i\in K$, and let $M_S$ be the semigroup generated by $S$ under composition. Then given a polynomial $f=\theta_1\circ\dots\circ\theta_n\in M_S$ for some $\theta_i\in S$, it suffices to check that the corresponding \emph{adjusted critical orbit}, \vspace{.05cm}  
\[-\theta_1(0),\; \theta_1(\theta_2(0))\,,\;\theta_1(\theta_2(\theta_3(0)))\,,\; \dots\;,\;\theta_1(\theta_2(\dots(\theta_n(0)))), \vspace{.05cm}\] 
contains no squares in $K$; see \cite[Proposition 6.3]{Me:LeftRightTotal}. \vspace{.1cm} 

\begin{proposition}\label{prop:stability} Let $K$ be a field of characteristic not $2$, let $S=\{x^2+c_1,\dots, x^2+c_s\}$ for some $c_i\in K$, and suppose that $f=\theta_1\circ\dots\circ\theta_n\in M_S$ satisfies the following properties: \vspace{.15cm} 
\begin{enumerate} 
\item[\textup{(1)}] $-\theta_1(0)$ is not a square in $K$, \vspace{.15cm} 
\item[\textup{(2)}] $\theta_1(\theta_2(\dots(\theta_m(0))))$ is not a square in $K$ for all $1\leq m\leq n$. \vspace{.15cm}
\end{enumerate}
Then $f$ is irreducible in $K[x]$.   \vspace{.05cm} 
\end{proposition}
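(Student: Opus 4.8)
The plan is to induct on the length $n$ of the composition, building $f$ from the outside in through the left-truncations $f_m := \theta_1\circ\cdots\circ\theta_m$, and to control irreducibility at each stage by a substitution lemma together with a norm computation. The basic observation driving the recursion is that $f_m = f_{m-1}\circ\theta_m = f_{m-1}(x^2+c_m)$, so that passing from $f_{m-1}$ to $f_m$ is exactly the operation $h(x)\mapsto h(x^2+c_m)$ applied to $h=f_{m-1}$; note also that each $f_m$ is monic of degree $2^m$, a fact I will use for a parity argument below.

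The key ingredient I would isolate and prove first is the following substitution lemma: if $h\in K[x]$ is irreducible over $K$ with a root $\beta$ (so $L:=K(\beta)=K[x]/(h)$ has degree $\deg h$ over $K$), then $h(x^2+c)$ is irreducible over $K$ if and only if $\beta-c$ is not a square in $L$. To prove it I would factor $h(x^2+c)=\prod_i(x^2+c-\beta_i)$ over $\overline{K}$, where the $\beta_i$ are the conjugates of $\beta$, so that $\gamma:=\sqrt{\beta-c}$ is a root of $h(x^2+c)$. Since $\beta=\gamma^2+c\in K(\gamma)$ we have $L\subseteq K(\gamma)$, whence $[K(\gamma):K]=[K(\gamma):L]\cdot\deg h$ is a multiple of $\deg h$; and $[K(\gamma):L]=2$ precisely when $x^2-(\beta-c)$ is irreducible over $L$, i.e.\ when $\beta-c$ is not a square in $L$. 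In that case $\gamma$ has degree $2\deg h=\deg h(x^2+c)$ over $K$, which forces the monic polynomial $h(x^2+c)$ to equal the minimal polynomial of $\gamma$ and hence to be irreducible; conversely, if $\beta-c$ is a square in $L$ then $h(x^2+c)$ visibly factors.

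To feed this into the induction I would convert the condition ``$\beta-c_m$ is not a square in $L$'' into the orbit condition by means of the one-directional norm implication: if $\beta-c_m=\gamma^2$ for some $\gamma\in L$, then $N_{L/K}(\beta-c_m)=N_{L/K}(\gamma)^2$ is a square in $K$, so it suffices to show that this norm is \emph{not} a square. Writing $f_{m-1}(y)=\prod_i(y-\beta_i)$ for $L=K(\beta)$ with $\beta$ a root of $f_{m-1}$ gives
\[ N_{K(\beta)/K}(\beta-c_m)=\prod_i(\beta_i-c_m)=(-1)^{\deg f_{m-1}}f_{m-1}(c_m)=f_{m-1}(c_m), \]
the last equality because $\deg f_{m-1}=2^{m-1}$ is even for $m\geq2$. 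Finally $f_{m-1}(c_m)=f_{m-1}(\theta_m(0))=(\theta_1\circ\cdots\circ\theta_m)(0)=\theta_1(\theta_2(\cdots(\theta_m(0))))$, which is exactly the $m$-th term of the adjusted critical orbit.

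Assembling these pieces, the induction runs cleanly: the base case $f_1=x^2+c_1$ is irreducible because $-c_1=-\theta_1(0)$ is not a square (condition (1)); and for each $m\geq2$, assuming $f_{m-1}$ irreducible, condition (2) says $\theta_1(\theta_2(\cdots(\theta_m(0))))$ is not a square, so by the norm computation $\beta-c_m$ is not a square in $K(\beta)$, and the substitution lemma then gives that $f_m=f_{m-1}(x^2+c_m)$ is irreducible. Taking $m=n$ yields that $f$ is irreducible. The only real obstacle I anticipate is the substitution lemma itself, specifically making the tower/degree count in the ``if'' direction fully rigorous (ensuring $[K(\gamma):K]=2\deg h$ so that $h(x^2+c)$ is the minimal polynomial of $\gamma$); the norm step and the orbit bookkeeping are routine once that lemma is in hand, and this is presumably the content of the cited \cite[Proposition 6.3]{Me:LeftRightTotal}.
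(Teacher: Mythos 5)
Your proof is correct. Note that the paper does not actually prove this proposition at all --- it simply cites \cite[Proposition 6.3]{Me:LeftRightTotal} --- and your argument (Capelli's lemma for the substitution $h\mapsto h(x^2+c_m)$, combined with the norm computation $N_{K(\beta)/K}(\beta-c_m)=(-1)^{2^{m-1}}f_{m-1}(c_m)=f_{m-1}(c_m)$ for $m\geq 2$) is exactly the standard Stoll--Jones argument that underlies the cited result, so you have in effect supplied the missing proof. Two small points worth making explicit: the identification of $N_{K(\beta)/K}(\beta-c_m)$ with $\prod_i(\beta_i-c_m)$ uses that $f_{m-1}$ is separable, which holds because its derivative is a nonzero polynomial when $\operatorname{char}K\neq 2$; and your induction only invokes hypothesis (2) for $2\leq m\leq n$ together with (1), so the $m=1$ instance of (2) is superfluous --- consistent with the paper's own definition of the adjusted critical orbit, whose first entry is $-\theta_1(0)$ rather than $\theta_1(0)$.
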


Hence, our strategy to construct irreducible polynomials $f\in M_S$ will be to construct $f$'s whose adjusted critical orbit (defined above) contains no squares. In particular, we focus on polynomials of the form $f=\phi^N\circ g$ for some $\phi\in S$ and some $g\in M_S$ (i.e., the first few letters of $f$ are the same); in this case, one can apply techniques and tools from arithmetic dynamics (applied to the single polynomial $\phi$) to gain information about the possible images of $f$. For example, for generic $\phi$ and large $N$, if $\phi^N(a)$ is a square, then $a$ must be a preperiodic point for $\phi$; see Lemma \ref{lem:Per-square} below. In particular, if $N$ is large enough, then $\phi^N(a)$ must be a \emph{periodic point} of $\phi$. Hence, $\phi$ has a rational (or integral) periodic point that is a square. This property should be quite restrictive, and indeed when $\phi=x^2+c$ for some $c\in\mathbb{Z}$, then we can characterize the possible values of $c$ explicitly.      

\begin{proposition}\label{prop:portraits} Let $\phi=x^2+c$ for some $c\in\mathbb{Z}$. If $\Per(\phi,\mathbb{Q})$ contains a rational square, then one of the following statements must be true: \vspace{.2cm} 
\begin{enumerate}
\item[\textup{(1)}]  $c=s^2-s^4$ for some $s\in\mathbb{Z}$ and $\PrePer(\phi,\mathbb{Q})=\{\pm{s^2},\pm{(1-s^2)}\}$.  \vspace{.25cm}   
\item[\textup{(2)}] $c=-1-s^2-s^4$ for some $s\in\mathbb{Z}$ and $\PrePer(\phi,\mathbb{Q})=\{\pm{s^2},\pm{(1+s^2)}\}$.   
\end{enumerate}   
\end{proposition}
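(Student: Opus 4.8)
The plan is to lean on the hypothesis $c\in\mathbb{Z}$, which forces every rational preperiodic point to be an integer and thereby makes the entire argument elementary, sidestepping the deep theorems on rational periodic points of quadratic polynomials. First I would record the integrality: if $P$ is rational preperiodic, say $\phi^m(P)=\phi^n(P)$ with $m>n$, then $P$ is a root of the monic integer polynomial $\phi^m(x)-\phi^n(x)$, hence an algebraic integer, hence an integer. In particular a rational \emph{square} periodic point has the form $s^2$ with $s\in\mathbb{Z}$, and $\PrePer(\phi,\mathbb{Q})\subseteq\mathbb{Z}$.

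Next I would bound the period of any rational periodic point by $2$. Given an integer cycle $a_0\to a_1\to\cdots\to a_{n-1}\to a_0$, the identity $a_{i+1}-a_{j+1}=(a_i-a_j)(a_i+a_j)$ shows $|a_i-a_j|$ divides $|a_{i+1}-a_{j+1}|$; running the divisibility around the cycle forces all the differences $|a_i-a_j|$ to be equal, and hence $|a_i+a_j|=1$ for every pair $i\ne j$. For a cycle of length $\ge 3$, summing this relation over three distinct elements equates an even integer with a sum of three terms each $\pm 1$, a contradiction; so no rational cycle has length $\ge 3$. Thus the square periodic point $s^2$ is either fixed or of exact period $2$. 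In the fixed case $s^2=(s^2)^2+c$ gives $c=s^2-s^4$; in the period-$2$ case the companion point $Q$ satisfies $s^2+Q=-1$ (the exact period-$2$ points being the roots of $x^2+x+(c+1)$), so $Q=-1-s^2$ and $c=s^2\cdot Q-1=-1-s^2-s^4$. This already produces the two families.

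It then remains to compute $\PrePer(\phi,\mathbb{Q})$ exactly. For each family I would first pin down all rational cycles and then run through backward orbits, since $\PrePer$ is the union of the backward orbits of the periodic points. When $c=s^2-s^4$, the fixed-point discriminant $1-4c=(2s^2-1)^2$ is a square (fixed points $s^2$ and $1-s^2$), while the period-$2$ discriminant $-3-4c=(2s^2-1)^2-4$ is strictly between consecutive squares, so there is no rational $2$-cycle; by the period bound these two fixed points are the only periodic points. Since $\phi^{-1}(s^2)=\{\pm s^2\}$ and $\phi^{-1}(1-s^2)=\{\pm(1-s^2)\}$, the only genuinely new points $-s^2$ and $-(1-s^2)$ have no further rational preimages because $s^4-2s^2$ and $s^4-1$ each lie strictly between consecutive squares for $|s|\ge 2$, yielding $\{\pm s^2,\pm(1-s^2)\}$. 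The family $c=-1-s^2-s^4$ is entirely parallel: $1-4c$ lies strictly between consecutive squares (no rational fixed point), so the $2$-cycle $\{s^2,-1-s^2\}$ is the unique cycle, and the backward orbit terminates because the preimage conditions $s^4+2s^2+2$ and $s^4+1$ for the new points $1+s^2$ and $-s^2$ again fall strictly between consecutive squares, giving $\{\pm s^2,\pm(1+s^2)\}$.

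The main obstacle is this final step, namely proving the listed sets \emph{equal} $\PrePer$ rather than merely contain it. It reduces to a short list of ``strictly between consecutive squares'' inequalities that kill all further preimages, together with a separate direct check of the few small values of $s$ (around $c=0$ and $c=-1$, where the four listed points collide and the inequalities degenerate to equalities) that the generic estimates do not cover. By contrast, the steps producing the form of $c$ are essentially forced once integrality is in hand.
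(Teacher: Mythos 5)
Your proposal is correct, and it reaches the classification by a genuinely more elementary route than the paper. Where the paper imports two external results --- the fact that rational periodic points of monic integer polynomials are integral of period $1$, $2$, or $4$ (\cite[Exercise 2.20]{Silverman}) and Morton's theorem excluding rational $4$-cycles for $x^2+c$ --- you derive the period bound $\le 2$ from scratch via the identity $a_{i+1}-a_{j+1}=(a_i-a_j)(a_i+a_j)$, the resulting equality of differences around the cycle, and the parity contradiction $2(a+b+c)=\pm1\pm1\pm1$; this is self-contained and exploits the specific shape of $x^2+c$. The backward-orbit analysis also diverges: the paper rules out preimages of $s^2+1$ in the $2$-cycle family by passing to the hyperelliptic curve $t^2=s^4+2s^2+2$, computing with Magma that the associated elliptic curve $Y^2=X^3-4X^2-4X$ has Mordell--Weil group $\mathbb{Z}/2\mathbb{Z}$, whereas you simply observe $(s^2+1)^2<s^4+2s^2+2<(s^2+2)^2$ for all $s$, which disposes of the equation with no computation at all; similarly your ``strictly between consecutive squares'' bounds replace the paper's mod-$4$ and factoring arguments for $s^4-2s^2$, $s^4-1$, and $s^4+1$, and your discriminant computations $(2s^2-1)^2-4$ and $4s^4+4s^2+5$ replace the paper's mod-$4$ exclusion of a coexisting cycle of the other period. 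The trade-off is that the paper's cited results give the general period bound that is reused verbatim in Lemma \ref{lem:Per-square}, while your argument is tailored to $x^2+c$; on the other hand your version eliminates the computer-algebra dependence entirely. Do make sure the final write-up carries out the small-$s$ checks you flag (the cases $c=0$ and $c=-1$, where $s^4+1=1$ is a square and the four listed points collapse to three), since several of your between-consecutive-squares inequalities are strict only for $|s|\ge 2$ or $s\ne 0$; the paper handles these by setting $c=0,-1$ aside at the outset.
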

\begin{proof} First if $c=0$, then $\phi(x)=x^2$ and $\PrePer(x^2,\mathbb{Q})=\{0,\pm{1}\}$. In particular, $\phi$ fits the form of case (1) above with $s=0$ (modulo the fact that $\pm{s^2}$ is a single point. Likewise if $c=-1$, then $\phi(x)=x^2-1$ and $\PrePer(x^2-1,\mathbb{Q})=\{0,\pm{1}\}$. In particular, $\phi$ fits the form of case (2) above with $s=0$ (again, modulo the fact that $\pm{s^2}$ is a single point. Therefore, from this point forward we assume that $c\neq0,-1$. Now, it is known that if $f\in\mathbb{Z}[x]$ is a monic polynomial and $P\in\Per(\phi,\mathbb{Q})$, then $P\in\mathbb{Z}$ and $P$ has period $1$, $2$ or $4$; see, for instance,  \cite[Exercise 2.20]{Silverman}, a consequence of examining periods modulo small primes. On the other hand, \cite[Theorem 4]{Morton} implies that $x^2+c\in\mathbb{Q}[x]$ have no rational points of period $4$. Combining these results, if $\phi=x^2+c$ for some $c\in\mathbb{Z}$, then any rational periodic point of $\phi$ is integral and has period at most $2$. Therefore, if $\Per(\phi,\mathbb{Q})$ contains a rational square, then $P=s^2$ for some $s\in\mathbb{Z}$ and either $P$ is a fixed point or a point of exact period $2$. From here, we proceed in cases: \\[5pt] 
\textbf{Case(1):} Suppose that $P=s^2$ is a fixed point. Then $(s^2)^2+c=s^2$ and so $c=s^2-s^4$ as in statement (1) of Proposition \ref{prop:portraits}. On the other hand, any other fixed point $x$ satisfies $0=x^2+s^2-s^4-x=(x-s^2)(x-(1-s^2))$, so that the complete list of fixed points of $\phi$ are $s^2$ and $1-s^2$ (which are distinct for non-zero $s$). Now if $\phi$ had an a point $t$ of exact period $2$, then $t$ is an integer and   
\[0=\phi^2(t)-t=(t^2+c)^2+c-t=(t^2-t+c)(t^2+t+c+1).\]
However, the left factor corresponds to a fixed point, so that $s^2-s^4=c=-1-t-t^2$. However, this equation has no solutions $(s,t)\in(\mathbb{Z}/4\mathbb{Z})^2$. Therefore, no such $t\in\mathbb{Z}$ exists and the only rational periodic points of $\phi$ are $s^2$ and $1-s^2$. Next, to determine the rational preperiodic points of $\phi$, we examine the preimages of $s^2$ and $1-s^2$. It is clear that $\phi^{-1}(s^2)=\{\pm{s^2}\}$. Moreover, if $y\in\mathbb{Q}$ and $y^2+s^2-s^4=\phi(y)=-s^2$, then $y\in\mathbb{Z}$ and 
\[y^2=s^4+2s^2=s^2(s^2+2).\]
However, $s\neq0$ (since we assume that $c\neq0$) so that $s^2+2=z^2$ for some $z\in\mathbb{Z}$. But this equation has no solutions mod $4$. Hence, $\{\pm{s^2}\}$ is the complete set of rational preimages of $s^2$ under iteterates of $\phi$. Now for the preimages of $1-s^2$. Note first that $\phi^{-1}(1-s^2)=\{\pm{(1-s^2)}\}$. On the other hand, if $y\in\mathbb{Q}$ and $y^2+s^2-s^4=\phi(y)=-(1-s^2)$, then $y\in\mathbb{Z}$ and $y^2=s^4-1$. However, this equation has no solutions: $(y-s^2)(y+s^2)=-1$ implies $y-s^2=1$ and $y+s^2=-1$ or $y-s^2=-1$ and $y+s^2=1$. But in either case, adding the equations implies $y=0$. But then $s^2=-1$, a contradiction. Therefore, $\{\pm{(1-s^2)}\}$ is the complete set of rational preimages of $1-s^2$ under iteterates of $\phi$. Hence, $\PrePer(\phi,\mathbb{Q})=\{\pm{s^2},\pm{(1-s^2)}\}$ as claimed.      
\\[5pt] 
\textbf{Case(2):} Suppose that $P=s^2$ is a point of exact period $2$. Then 
\[0=\phi^2(s^2)-s^2=(c + s^4 - s^2)(c + s^4 + s^2 + 1).\]
However, the left factor is non-zero since $P$ is not a fixed point. Thus, $c=-1-s^2-s^4$ as in statement (2) of Proposition \ref{prop:portraits}. On the other hand, if $\phi$ has another point $t$ if period $2$, then (repeating the calculation above) implies $-1-t-t^2=c=-1-s^2-s^4$ so that $0=(t-s^2)(t+s^2+1)$. Hence, the only rational points of exact period $2$ are $s^2$ and $-(s^2+1)$. In fact, $\phi$ maps one to the other and vice versa. Likewise, if $\phi$ has a rational fixed point $t$, then $t$ is integral and $t-t^2=c=-1-s^2-s^4$. However, this equation has no solutions $(s,t)\in(\mathbb{Z}/4\mathbb{Z})^2$. Therefore, no such $t\in\mathbb{Z}$ exists and the only rational periodic points of $\phi$ are $s^2$ and $-(s^2+1)$. Next, to determine the rational preperiodic points of $\phi$, we examine the preimages of $s^2$ and $-(s^2+1)$. It is easy to check that $\phi^{-1}(s^2)=\{\pm{(s^2+1)}\}$ and $\phi^{-1}(-(s^2+1))=\{\pm{s^2}\}$. On the other hand, if $t\in\mathbb{Q}$ is such that $\phi(t)=s^2+1$, then $t$ is integral and 
\[t^2=s^4+2s^2+2.\]   
However, the hyperelliptic curve $C$ with affine model $t^2=s^4+2s^2+2$ is birational over $\mathbb{Q}$ to the elliptic curve $E: y^2 = x^3 - 4x^2 - 4x$. Moreover, we compute with Magma that $E(\mathbb{Q})\cong\mathbb{Z}/2\mathbb{Z}$. Therefore $\#C(\mathbb{Q})=2$, corresponding to the two points at infinity. Hence, there are no $t\in\mathbb{Q}$ for which $\phi(t)=s^2+1$. Likewise, if $t\in\mathbb{Q}$ is such that $\phi(t)=-s^2$, then $t$ is integral and 
\[(t-s^2)(t+s^2)=1.\]
Therefore, $t-s^2=1=t+s^2$ or $t-s^2=-1=t+s^2$, which in either case implies that $s=0$. However, this contradicts our assumption that $c\neq-1$; this case was handled at the beginning of the proof. Hence, we deduce that $\PrePer(\phi,\mathbb{Q})=\{\pm{s^2},\pm{(1+s^2)}\}$ as claimed. \vspace{.25cm}               
\end{proof}  

\section{Proof of Main Theorem}
In this section we prove our main result, Theorem \ref{thm:main}, on the existence of many irreducible polynomials within semigroups generated by sets of the form $S=\{x^2+c_1,\dots, x^2+c_s\}$. To do this, we assume Theorem \ref{thm:classification} on the classification of exceptional pairs and leave the proof of this fact to Section \ref{sec:exceptionalpairs}. First, as  Proposition \ref{prop:stability} makes clear, to show that a polynomial $f\in M_S$ is irreducible, it suffices to show that an associated adjusted critical orbit contains no squares. As a first step in this direction, we note that if $\phi=x^2+c\in\mathbb{Z}$ and $a\in\mathbb{Z}$ are such that $\phi^N(a)$ is a square for some large $N$ (depending on $\phi$), then $a$ must be preperiodic for $\phi$ and $\phi$ must have a square periodic point. \vspace{.1cm}    

\begin{lemma}\label{lem:Per-square} Let $\phi=x^2+c$ for some $c\in\mathbb{Z}\mysetminus\{0,-1\}$. Then there exists an iterate $N=N_\phi\geq2$ such that if $\phi^N(a)$ is a square in $\mathbb{Q}$ for some $a\in\mathbb{Z}$, then $a\in\PrePer(\phi,\mathbb{Q})$ and $\phi$ has a rational periodic point that is a square. \vspace{.1cm}         
\end{lemma}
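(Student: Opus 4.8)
The plan is to reduce the statement to a single \emph{factorization identity} together with a uniform escape estimate for the integral dynamics of $\phi$. Suppose $\phi^N(a)$ is a square in $\mathbb{Q}$ for some $a\in\mathbb{Z}$. Since $\phi\in\mathbb{Z}[x]$ and $a\in\mathbb{Z}$, the value $\phi^N(a)$ is a nonnegative integer which is a rational square, so $\phi^N(a)=y^2$ for some $y\in\mathbb{Z}$. Writing $b=\phi^{N-1}(a)\in\mathbb{Z}$ and using $\phi^N(a)=b^2+c$, I would rewrite this as $(y-b)(y+b)=c$. Because $c$ is a fixed nonzero integer, this forces $y-b$ and $y+b$ to be a complementary pair of divisors of $c$; in particular $b$ lies in the finite set $B_c:=\{(d_2-d_1)/2 : d_1d_2=c\}\cap\mathbb{Z}$, whose elements are bounded solely in terms of $c$. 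Thus the \emph{only} way $\phi^N(a)$ can be a square is for $\phi^{N-1}(a)$ to land in a fixed finite set depending only on $c$.

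Second, I would produce a single iterate $N_1=N_1(\phi)$ such that every integer that is not preperiodic escapes past the threshold $T:=\max_{b\in B_c}|b|$ within $N_1$ steps. The essential point here is \emph{uniformity} over all $a$, and I would obtain it geometrically rather than through canonical heights (which need not admit a uniform positive lower bound on the integers). Let $R_c$ denote the larger real fixed point of $\phi$. For integers $x$ with $|x|>R_c$ one has $\phi(x)>|x|$, and since $\phi(x)-x$ is then a positive integer, the orbit of such $x$ is strictly increasing and crosses any fixed threshold $T$ in at most $T$ steps; meanwhile there are only finitely many integers in $[-R_c,R_c]$, and each non-preperiodic one leaves this interval after a bounded number of steps. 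Taking $N_1$ to be the maximum of these finitely many escape-plus-crossing times yields the bound: if $a$ is a non-preperiodic integer, then $|\phi^{n}(a)|>T$ for all $n\ge N_1$.

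Combining the two steps forces preperiodicity. Fix $N-1\ge N_1$. If $\phi^N(a)=y^2$, then $\phi^{N-1}(a)=b\in B_c$ satisfies $|b|\le T$, which by the escape estimate is impossible unless $a\in\PrePer(\phi,\mathbb{Q})$; note that every rational preperiodic point of $\phi=x^2+c$ with $c\in\mathbb{Z}$ is in fact an integer (as recalled in the proof of Proposition \ref{prop:portraits}), so the finiteness invoked above is exactly finiteness of the integral preperiodic set. Finally, to upgrade ``$\phi^N(a)$ is a square preperiodic point'' to ``$\phi$ has a square \emph{periodic} point,'' I would further require $N$ to exceed the maximal pre-period occurring among the finitely many integral preperiodic points of $\phi$. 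With that choice, $\phi^N(a)$ is not merely preperiodic but periodic, so $y^2=\phi^N(a)$ is a rational periodic point of $\phi$ that is a square. Setting $N_\phi$ to be the larger of $N_1+1$ and this pre-period bound produces an $N$ depending only on $\phi$ that works for every $a$.

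The main obstacle I anticipate is precisely the uniformity in the second step: for a single non-preperiodic $a$ it is clear that $\phi^n(a)\to\infty$, but the statement demands one $N$ that simultaneously defeats \emph{all} integers $a$. The escape-radius argument is what makes this both uniform and elementary, and the factorization identity in the first step is what keeps the relevant threshold a fixed constant $T$ rather than a quantity growing with $N$, so that ``bounded image forces bounded, hence preperiodic, source'' can actually be applied.
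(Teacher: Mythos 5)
Your proof is correct, and it takes a genuinely different and more elementary route than the paper's. The paper works with the genus-one curve $Y^2=\phi^2(X)$, invokes Siegel's theorem to bound its integral points, converts that bound into the canonical-height inequality $2^{N-2}\hat{h}_\phi(a)<B$, uses Northcott's theorem to obtain a positive lower bound on nonzero canonical heights of rational points, and finally cites Poonen's classification to know that rational preperiodic points of $x^2+c$ become periodic after two iterates. You instead observe that a square value forces $b=\phi^{N-1}(a)$ onto the conic $Y^2=X^2+c$, whose integral points are trivially finite via the factorization $(Y-X)(Y+X)=c$, so that $|b|\le |c|$; and you replace the height machinery with the elementary escape-radius argument that outside $[-R_c,R_c]$ integer orbits of $\phi$ are strictly increasing, so every non-preperiodic integer uniformly clears the threshold $|c|$ after a bounded number of steps (this also shows $\PrePer(\phi,\mathbb{Q})\cap\mathbb{Z}$ is finite, which is what you use in place of Poonen's preperiod bound). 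Your version buys effectivity --- an explicit $N_\phi$ of size roughly $O(|c|)$ --- and avoids Siegel, Northcott, canonical heights, and Poonen entirely; the paper's version is the one that would survive if the relevant curve had positive genus, but here the difference-of-squares identity makes that machinery unnecessary. One small quibble: your parenthetical worry that canonical heights ``need not admit a uniform positive lower bound on the integers'' is unfounded, since Northcott supplies exactly such a bound over all of $\mathbb{Q}$ (this is how the paper proceeds); the remark is harmless, as your argument does not depend on it.
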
 
\begin{proof} First consider the hyperelliptic curve $C_\phi$ with affine model $Y^2=\phi^2(X)$. Then $C$ has genus $1$: the discriminant of $\phi^2$ is a non-zero multiple of $\phi(0)\cdot\phi^2(0)=c\cdot (c^2+c)$, which is non-zero since $c\neq0,-1$. In particular, Siegel's theorem implies that $C$ has only finitely many integral points. Hence, we can choose a positive constant $B$ such that if $(x,y)\in C$ for some $x,y\in\mathbb{Z}$, then $\hat{h}_\phi(x)< B$; here, $h_\phi(x)=\lim h(\phi^n(x))/2^n$ is the usual canonical height function. Likewise, Northcott's theorem implies that the quantity 
\[\hat{h}_{\phi,\mathbb{Q}}^{\min}=\min\{\,\hat{h}_\phi(b)\,:\, b\in\mathbb{Q}\;\text{and}\; \hat{h}_\phi(b)>0\}\]
is strictly positive. Next we enlarge $B$ if necessary to assume that $B\geq\hat{h}_{\phi,\mathbb{Q}}^{\min}$ and define $N_\phi:=\lceil \log_2(B/\hat{h}_{\phi,\mathbb{Q}}^{\min})\rceil+2$. Note that $N_\phi\geq2$ by this assumption on $B$. Now suppose that $a\in\mathbb{Z}$ is such that $\phi^N(a)=y^2$ for some $y\in\mathbb{Q}$. Then $y$ must be an integer since $\mathbb{Z}$ is integrally closed in $\mathbb{Q}$. Hence, $(X,Y)=(\phi^{N-2}(a),y)$ is an integral point on $C$ and
\[2^{N-2}\cdot \hat{h}_\phi(a)=\hat{h}_\phi(\phi^{N-2}(a))< B\]
by the functoriality of canonical heights. Now suppose for a contradiction that $a\not\in\PrePer(\phi,\mathbb{Q})$. Then $\hat{h}_\phi(a)>0$, so that $\hat{h}_{\phi,\mathbb{Q}}^{\min}\leq \hat{h}_\phi(a)$. Hence, the inequality above implies that $N< \log_2(B/\hat{h}_{\phi,\mathbb{Q}}^{\min})+2$, a contradiction. Therefore, $a$ is preperiodic for $\phi$. On the other hand, if $f\in\mathbb{Z}[x]$ is a monic polynomial and $P\in\Per(\phi,\mathbb{Q})$, then $P\in\mathbb{Z}$ and $P$ has period $1$, $2$ or $4$; see, for instance,  \cite[Exercise 2.20]{Silverman}. Moreover, \cite[Theorem 4]{Morton} implies that $x^2+c\in\mathbb{Q}[x]$ have no rational points of period $4$. Combining these results, if $\phi=x^2+c$ for some $c\in\mathbb{Z}$, then any rational periodic point of $\phi$ is integral and has period at most $2$. Next we note that part 6 of \cite[Theorem 3]{Poonen} implies that $\phi^2(a)$ is already periodic (after two iterates, any rational preperiodic point must enter a periodic cycle). Therefore, $\phi^N(a)$ is periodic since $N\geq2$. But $y^2=\phi^N(b)$ by assumption, so that $\phi$ has a rational periodic point that is a square.                         
\end{proof}  
Next, we put the previous result together with our classification of exceptional pairs to show that if a pair of maps $(\phi_1,\phi_2)$ has the property that there exist large iterates $N_1$ and $N_2$ and points $b_1,b_2\in\mathbb{Z}$ such that $\phi^{N_1}(\phi_2(b_1))$ and $\phi_2^{N_2}(\phi_1(b_2))$ are \emph{both} squares, then $(\phi_1,\phi_2)$ is of a very special form.  

\begin{proposition}\label{prop:nosquares} Let $\phi_1=x^2+c_1$ and $\phi_2=x^2+c_2$ for some distinct $c_1,c_2\in\mathbb{Z}\mysetminus\{0,-1\}$. Moreover, assume that the pair $(c_1,c_2)$ is not of the form $(s^2-s^4,-1-s^2-s^4)$ for some $s\in\mathbb{Z}$, up to reordering. Then one of the following statements must be true:\vspace{.15cm} 
\begin{enumerate} 
\item[\textup{(1)}] There is $N\geq2$ such that $\phi_1^N(\phi_2(b))$ is not a rational square for all $b\in\mathbb{Z}$. \vspace{.15cm}
\item[\textup{(2)}] There is $N\geq2$ such that $\phi_2^N(\phi_1(b))$ is not a rational square for all $b\in\mathbb{Z}$. \vspace{.1cm}   
\end{enumerate}   
\end{proposition}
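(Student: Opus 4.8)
The plan is to argue by contradiction, feeding Lemma \ref{lem:Per-square} into the classification of exceptional pairs provided by Theorem \ref{thm:classification}. Suppose that neither (1) nor (2) holds. Negating (1), for every $N\geq 2$ there is some $b\in\mathbb{Z}$ with $\phi_1^N(\phi_2(b))$ a rational square; negating (2), for every $N\geq 2$ there is some $b\in\mathbb{Z}$ with $\phi_2^N(\phi_1(b))$ a rational square. The goal is to show that these two assumptions together force $(c_1,c_2)$ into the exceptional list, contradicting the standing hypotheses.

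First I would apply Lemma \ref{lem:Per-square} to $\phi_1$, which is legitimate since $c_1\in\mathbb{Z}\setminus\{0,-1\}$; let $N_{\phi_1}\geq 2$ be the iterate it furnishes. Taking $N=N_{\phi_1}$ in the negation of (1) produces $b_1\in\mathbb{Z}$ with $\phi_1^{N_{\phi_1}}(\phi_2(b_1))$ a rational square. Setting $a_1:=\phi_2(b_1)$, which is an integer because $b_1,c_2\in\mathbb{Z}$, the lemma forces $a_1\in\PrePer(\phi_1,\mathbb{Q})$ and guarantees that $\phi_1$ has a rational periodic point that is a square. Symmetrically, applying the lemma to $\phi_2$ with its own iterate $N_{\phi_2}\geq 2$ and using the negation of (2) yields $b_2\in\mathbb{Z}$ with $a_2:=\phi_1(b_2)\in\PrePer(\phi_2,\mathbb{Q})$, and shows $\phi_2$ likewise has a rational periodic square point.

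At this stage both hypotheses of Theorem \ref{thm:classification} are in force: both $\phi_1$ and $\phi_2$ have periodic points that are rational squares, giving condition (1), while $a_2\in\phi_1(\mathbb{Z})\cap\PrePer(\phi_2,\mathbb{Q})$ and $a_1\in\phi_2(\mathbb{Z})\cap\PrePer(\phi_1,\mathbb{Q})$ show both intersections are non-empty, giving condition (2). Theorem \ref{thm:classification} then pins down $(c_1,c_2)$, up to reordering, as either $(-1,-3)$ or $(s^2-s^4,-1-s^2-s^4)$. The second form is excluded by assumption, and the first is impossible because $c_1,c_2\in\mathbb{Z}\setminus\{0,-1\}$ forbids the value $-1$. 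This contradiction establishes that (1) or (2) must hold.

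I do not expect a genuine obstacle inside this argument, since all the analytic and Diophantine difficulty has been quarantined into Lemma \ref{lem:Per-square} (the Siegel and Northcott height estimates) and Theorem \ref{thm:classification} (the integral- and rational-point computations). The only points demanding care here are bookkeeping: confirming that $\phi_2(b_1)$ and $\phi_1(b_2)$ are integers so that the lemma applies, and checking that the logical negation of the statements ``there is $N\geq 2$ such that \dots'' genuinely supplies a square at the \emph{specific} iterates $N_{\phi_1}$ and $N_{\phi_2}$ delivered by the lemma, rather than at some unrelated $N$.
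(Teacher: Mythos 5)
Your proposal is correct and follows essentially the same route as the paper's own proof: negate both conclusions, apply Lemma \ref{lem:Per-square} at the iterates $N_{\phi_1}$ and $N_{\phi_2}$ to place $a_1=\phi_2(b_1)$ and $a_2=\phi_1(b_2)$ in the relevant preperiodic sets, verify both hypotheses of Theorem \ref{thm:classification}, and rule out $(-1,-3)$ via $c_i\neq -1$. The bookkeeping points you flag (integrality of $a_1,a_2$ and instantiating the negation at the specific iterates from the lemma) are exactly the ones the paper handles implicitly.
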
 
\begin{proof} Assume that $c_1,c_2\in\mathbb{Z}\mysetminus\{0,-1\}$ are distinct and choose positive integers $N_1$ and $N_2$ as in Lemma \ref{lem:Per-square} for $\phi_1$ and $\phi_2$ respectively. Now suppose that statements (1) and (2) of Proposition \ref{prop:nosquares} are both false. Then there exist $b_1,b_2\in\mathbb{Z}$ such that $\phi_1^{N_1}(\phi_2(b_1))$ and $\phi_2^{N_2}(\phi_1(b_2))$ are both rational squares. Hence, if we set $a_1=\phi_2(b_1)$ and $a_2=\phi_1(b_2)$, then we deduce from Lemma \ref{lem:Per-square} that 
\[a_1\in \phi_2(\mathbb{Z})\cap \PrePer(\phi_1,\mathbb{Q})\;\;\text{and}\;\; a_2\in \phi_1(\mathbb{Z})\cap \PrePer(\phi_2,\mathbb{Q})\]
and both $\phi_1$ and $\phi_2$ have rational periodic points that are squares. But then assumptions (1) and (2) of Theorem \ref{thm:classification} hold. Therefore, after perhaps reordering $c_1$ and $c_2$, we have that \[(c_1,c_2)=(s^2-s^4,-1-s^2-s^4)\] 
for some $s\in\mathbb{Z}$ as desired; the other pair $(-1,-3)$ is ruled out by our assumption that $c_1,c_2\neq0,-1$.        
\end{proof} 

On the other hand, it is still possible to exclude square images in the case of exceptional pairs with a bit more effort. 

\begin{proposition}\label{prop:nosquares-exceptional} Let $(c_1,c_2)=(s^2-s^4,-1-s^2-s^4)$ for some $s\in\mathbb{Z}\mysetminus\{0,\pm{1}\}$ and let $\phi_i=x^2+c_i$. Then there is an $N\geq2$ such that $\phi_1^N\circ\phi_2\circ\phi_1(b)$ is not a rational square for all $b\in\mathbb{Z}$. 
\end{proposition}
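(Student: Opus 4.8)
The plan is to invoke Lemma \ref{lem:Per-square} for the single map $\phi_1 = x^2 + c_1$ and then rule out the finitely many ways in which its hypothesis could be met. Since $s \in \mathbb{Z} \mysetminus \{0, \pm 1\}$ we have $c_1 = s^2 - s^4 = s^2(1 - s^2) \notin \{0, -1\}$, so the lemma supplies an $N = N_{\phi_1} \geq 2$ such that $\phi_1^N(a)$ being a rational square for some $a \in \mathbb{Z}$ forces $a \in \PrePer(\phi_1, \mathbb{Q})$; by Proposition \ref{prop:portraits}(1) the latter set is exactly $\{\pm s^2, \pm(1 - s^2)\}$. I claim this $N$ works. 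Indeed, fix $b \in \mathbb{Z}$ and put $a := \phi_2(\phi_1(b)) \in \mathbb{Z}$; if $\phi_1^N \circ \phi_2 \circ \phi_1(b) = \phi_1^N(a)$ were a square, then $a$ would lie in $\{\pm s^2, \pm(1 - s^2)\}$, and it suffices to show this cannot happen.

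To do so I would set $u := \phi_1(b) = b^2 + s^2 - s^4$ and rewrite the condition $\phi_2(u) = v$ as $u^2 = v + 1 + s^2 + s^4$, using $c_2 = -1 - s^2 - s^4$. Running through the four target values $v \in \{s^2, -s^2, 1 - s^2, s^2 - 1\}$ yields $u^2 = (s^2 + 1)^2$, $u^2 = s^4 + 1$, $u^2 = s^4 + 2$, and $u^2 = s^2(s^2 + 2)$, respectively. For $s \neq 0$ the values $s^4 + 1$ and $s^4 + 2$ lie strictly between the consecutive squares $(s^2)^2$ and $(s^2 + 1)^2$, and $u^2 = s^2(s^2 + 2)$ would force $s^2 + 2$ to be a perfect square, which it is not (it sits strictly between $s^2$ and $(|s| + 1)^2$); so the last three values of $v$ are unreachable outright.

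The only surviving case is $v = s^2$, giving $u = \pm(s^2 + 1)$, and here is the one place the inner factor $\phi_1$ earns its keep: I must still solve $\phi_1(b) = \pm(s^2 + 1)$, i.e. $b^2 = s^4 + 1$ or $b^2 = s^4 - 2s^2 - 1 = (s^2 - 1)^2 - 2$, both of which are non-squares by the same consecutive-square and parity reasoning (in the second case $(s^2-1)^2 - b^2 = 2$ has no integer solutions, the two factors being of equal parity). Hence $\phi_2(\phi_1(b)) \notin \PrePer(\phi_1, \mathbb{Q})$ for all $b \in \mathbb{Z}$, proving the claim. I expect the genuine content to be not the routine non-square checks but the structural observation that one cannot drop the leading $\phi_1$: for $f = \phi_1^N \circ \phi_2$ the input $b = \pm(s^2 + 1)$ already gives $\phi_2(b) = s^2$, the $\phi_1$-fixed point, whose $\phi_1$-images are the square $s^2$. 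Prepending $\phi_1$ replaces the reachable set $\{b^2 + c_2\}$ by $\{\phi_1(b)\}$, and it is precisely the non-squareness of $s^4 + 1$ and $s^4 - 2s^2 - 1$ that closes off this last escape.
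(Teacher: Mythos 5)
Your proof is correct and follows essentially the same route as the paper: invoke Lemma \ref{lem:Per-square} for $\phi_1$ together with Proposition \ref{prop:portraits} to force $\phi_2(\phi_1(b))\in\{\pm s^2,\pm(1-s^2)\}$, then eliminate each case by elementary Diophantine arguments. The only cosmetic differences are that the paper discards $a=\pm(1-s^2)$ by noting $\phi_1^N(a)=1-s^2$ cannot be a square (rather than by showing $\phi_2(u)=\pm(1-s^2)$ has no integer solution $u$, as you do), and it uses reduction mod $4$ where you use consecutive-squares and parity factoring; both versions are sound.
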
 
\begin{proof} Let $(c_1,c_2)=(s^2-s^4,-1-s^2-s^4)$ for some $s\in\mathbb{Z}\mysetminus\{0,\pm{1}\}$ and let $\phi_i=x^2+c_i$. In particular, $c_1\neq0,-1$. Now choose $N\geq2$ for $\phi_1$ as in Lemma \ref{lem:Per-square} and suppose that $\phi_1^N\circ\phi_2\circ\phi_1(b)=y^2$ for some $b,y\in\mathbb{Z}$. Then Proposition \ref{prop:portraits} and Lemma \ref{lem:Per-square} together imply that $a=\phi_2(\phi_1(b))\in\PrePer(\phi_1,\mathbb{Q})=\{\pm{s},\pm{(1-s^2)}\}$. However, if $a=\pm{(1-s^2)}$, then $y^2=\phi^N(a)=1-s^2$ and $y^2+s^2=1$. However, this forces $s=0,\pm{1}$, a contradiction. Therefore, $a=\pm{s^2}$. Now let $z=\phi_1(b)\in\mathbb{Z}$ so that  
\[\phi_2(z)=z^2-1-s^2-s^4=\pm{s^2}.\] 
From here we proceed in cases. First if $\phi_2(z)=s^2$, then it must be the case that $z=\pm{(s^2+1)}$ (after all, $\phi_2$ is a quadratic map and both of these distinct choices of $z$ map to $s^2$). Hence, $\pm{(s^2+1)}=z=\phi_1(b)=b^2+s^2-s^4$. However, the equation $-(s^2+1)=b^2+s^2-s^4$ has no solutions modulo $4$. Therefore, $s^2+1=b^2+s^2-s^4$ and  thus $1=(b-s^2)(b+s^2)$. However, this easily implies that $(b,s)=(\pm{1},0)$, a contradiction of our assumption on $s$. Therefore, it must be the case that $\phi_2(z)=-s^2$. But this implies that $z^2-1-s^2-s^4=-s^2$ and thus $(z-s^2)(z+s^2)=1$. However, again this forces $(z,s)=(\pm{1},0)$, a contradiction. In particular, it is not possible for  $\phi_1^N\circ\phi_2\circ\phi_1(b)$ to be a square as claimed.                
\end{proof} 

Lastly, before we prove Theorem \ref{thm:main} in the case where there are at least two irreducible maps in the generating set $S$, we remind the reader that if $\phi=x^2+c$ for some $c\in\mathbb{Z}\mysetminus\{0,-1\}$, then $\phi^n(0)$ is not a square for all $n\geq2$. In particular, if $\phi$ is irreducible, its adjusted critical orbit contains no squares.      

\begin{lemma}\label{lem-Stoll-irred} Let $\phi=x^2+c$ for some $c\in\mathbb{Z}\mysetminus\{0,-1\}$. Then $\phi^n(0)$ is not a rational square for all $n\geq2$.   
\end{lemma}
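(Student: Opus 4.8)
The plan is to reduce the statement to a growth property of the critical orbit via an elementary factorization. Suppose, for contradiction, that $\phi^n(0)$ is a rational square for some $n\geq2$. Since $c\in\mathbb{Z}$, the orbit $(\phi^m(0))_{m\geq0}$ consists of integers, and a rational number whose square is an integer is itself an integer; hence $\phi^n(0)=y^2$ for some $y\in\mathbb{Z}$. Writing $a=\phi^{n-1}(0)\in\mathbb{Z}$, the recursion $\phi^n(0)=\phi(\phi^{n-1}(0))=a^2+c$ gives
\[
c=y^2-a^2=(y-a)(y+a),
\]
a factorization of $c$ into two integer factors, both nonzero because $c\neq0$.

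From here I would extract a size bound. Setting $u=y-a$ and $v=y+a$, we have $uv=c$ and $v-u=2a$. Since $u,v$ are nonzero integers, $(|u|-1)(|v|-1)\geq0$ yields $|u|+|v|\leq|uv|+1=|c|+1$, and therefore
\[
2\,|\phi^{n-1}(0)|=|v-u|\leq|u|+|v|\leq|c|+1 .
\]
Thus if $\phi^n(0)$ is a square, then $|\phi^{n-1}(0)|\leq(|c|+1)/2$. So it suffices to prove that the critical orbit escapes this window, namely that $|\phi^{m}(0)|>(|c|+1)/2$ for all $m\geq2$, which contradicts the displayed inequality whenever $n-1\geq2$, i.e.\ for $n\geq3$; the case $n=2$ I would dispatch directly, since $\phi^2(0)=c(c+1)$ is a product of coprime consecutive integers and hence a perfect square only if both $c$ and $c+1$ are squares, forcing $c\in\{0,-1\}$, which is excluded.

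The remaining work, and the main obstacle, is the uniform lower bound on $|\phi^m(0)|$, which I would establish by cases on $c$. For $c\geq1$ the map $t\mapsto t^2+c$ satisfies $t^2-t+c>0$ for all real $t$ (negative discriminant), so the orbit is strictly increasing and $\phi^m(0)\geq\phi^2(0)=c(c+1)>(c+1)/2$ for $m\geq2$. For $c\leq-3$ one checks $\phi^2(0)=|c|(|c|-1)>|c|$ and then argues by induction that $\phi^m(0)>|c|\geq(|c|+1)/2$ for all $m\geq2$, using that $a>|c|$ implies $a^2+c=a^2-|c|>|c|^2-|c|>|c|$. The only subtlety is that one cannot simply invoke escape to infinity: for $c=-2$ the orbit is bounded, equal to $0,-2,2,2,\dots$, so $\phi^m(0)=2$ for $m\geq2$; but $2>\tfrac{3}{2}=\tfrac{|c|+1}{2}$, so the bound still holds. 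With the lower bound in hand for every admissible $c$ and every $m\geq2$, the contradiction with the size bound completes the argument.
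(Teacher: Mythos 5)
Your argument is correct, and it is genuinely more self-contained than what the paper does: the paper offers no proof at all for this lemma, deferring entirely to Stoll's Corollary~1.3 ``and its proof.'' Your route is essentially the classical one underlying Stoll's argument --- from $\phi^n(0)=y^2$ and $\phi^n(0)=a^2+c$ with $a=\phi^{n-1}(0)$ one gets the factorization $c=(y-a)(y+a)$, which bounds $|a|$ in terms of $|c|$, and one then shows the critical orbit outgrows that bound --- but Stoll's setting is the case $c\geq 1$, where the orbit is monotone and the contradiction $c\geq 2a+1\geq 2c+1$ is immediate. What your write-up adds is the uniform inequality $|u|+|v|\leq |uv|+1$ for nonzero integer factors, which handles both signs of $c$ at once, together with the case analysis for $c\leq -3$ (induction showing $\phi^m(0)>|c|$ for $m\geq 2$), the bounded orbit $c=-2$, and the separate treatment of $n=2$ via $\phi^2(0)=c(c+1)$ being a product of coprime consecutive integers. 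I checked each of these: the base case $\phi^2(0)=|c|(|c|-1)>|c|$ for $|c|\geq 3$, the inductive step $a>|c|\Rightarrow a^2-|c|>|c|^2-|c|>|c|$, and the $c=-2$ orbit $0,-2,2,2,\dots$ with $2>3/2$ all hold, and the $n=2$ case correctly forces $c\in\{0,-1\}$ (for negative $c$ one should say $|c|$ and $|c+1|$ are both squares rather than $c$ and $c+1$, but the conclusion is unaffected). The only thing your approach gives up relative to the citation is brevity; what it buys is a complete elementary proof covering all $c\in\mathbb{Z}\setminus\{0,-1\}$ without reference to Stoll.
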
 
\begin{proof} See \cite[Corollary 1.3]{Stoll} and its proof.  
\end{proof} 

We now have all the tools in place to prove Theorem \ref{thm:main} when $S$ contains at least $2$ irreducible polynomials. 

\begin{proof}[(Proof of Theorem \ref{thm:main})]  Let $S=\{x^2+c_1,\dots, x^2+c_s\}$ for some distinct $c_i\in\mathbb{Z}$, and suppose that $S$ contains at least $2$ irreducible polynomials; up to reordering, we may call them $\phi_1=x^2+c_1$ and $\phi_2=x^2+c_2$. From here, we proceed in cases. Suppose first that $(c_1,c_2)$ is not an exceptional pair. Then we may apply Proposition \ref{prop:nosquares} and (after reordering if necessary) choose $N$ such that $\phi_1(\phi_2(b))$ is not a rational square for all $b\in\mathbb{Z}$. In particular, it follows from Proposition \ref{prop:stability} and Lemma \ref{lem-Stoll-irred} that 
\[\{\phi_1^N\circ\phi_2\circ F\,:\, F\in M_S\}\] 
is a set of irreducible polynomials (a stronger claim than the one made in the statement of the theorem): the first $N$-terms of the adjusted critical orbit of any $f=\phi_1^N\circ\phi_2\circ F$ are not squares by Lemma  \ref{lem-Stoll-irred} and the remaining terms are of the form $\phi_1(\phi_2(b))$ for some $b\in\mathbb{Z}$.

Therefore, we may assume that $(c_1,c_2)$ is an exceptional pair. Then clearly, $(c_1,c_2)\neq (-1,-3)$ as $x^2-1$ is reducible. Hence, (again after reordering if necessary) we may assume that $(c_1,c_2)=(s^2-s^4,-1-s^2-s^4)$ for some $s\in\mathbb{Z}\mysetminus\{0,\pm{1}\}$. Then Proposition \ref{prop:nosquares-exceptional} implies that there is an $N$ such that $\phi_1^N\circ\phi_2\circ\phi_1(b)$ is not a rational square for all $b\in\mathbb{Z}$. In this case, 
\[\{\phi_1^N\circ\phi_2\circ\phi_1\circ F\,:\, F\in M_S\}\]
is a set of irreducible polynomials by Proposition \ref{prop:stability}: the first $N$-terms of the adjusted critical orbit of any $f=\phi_1^N\circ\phi_2\circ F$ are not squares by Lemma  \ref{lem-Stoll-irred}, the next term is $\phi_1^N\circ\phi_2(0)=\phi_1^N(-1-s^2-s^4)$, which is not a square by Lemma \ref{lem:Per-square} and Proposition \ref{prop:portraits} (since it is easy to check that $-1-s^2-s^4\not\in\{\pm{s^2},\pm{(1-s^2)}\}$ for all $s\neq0$), and the remaining terms are of the form $\phi_1^N\circ\phi_2\circ\phi_1(b)$ for some $b\in\mathbb{Z}$.                
\end{proof} 

\section{Classification of Exceptional Pairs} \label{sec:exceptionalpairs}
In this section, we prove Theorem \ref{thm:classification} from the introduction, classifying the exceptional pairs of polynomials $(x^2+c_1,x^2+c_2)$; this means both $\phi_1=x^2+c_1$ and $\phi_2=x^2+c_2$ have periodic points that are rational squares and that both $\phi_1(\mathbb{Z})\cap\PrePer(\phi_2,\mathbb{Q})$ and $\phi_2(\mathbb{Z})\cap\PrePer(\phi_1,\mathbb{Q})$ are non-empty sets. In particular, if $(c_1,c_2)$ are exceptional then Proposition \ref{prop:portraits} implies that there are $x,y,s,t\in\mathbb{Z}$ such that \vspace{.05cm}  
\begin{equation}\label{exceptionalproperties}
\begin{split} 
c_1&=\,s^2-s^4,\;-1-s^2-s^4 \\[5pt] 
c_2&=t^2-t^4,\;-1-t^2-t^4 \\[5pt] 
x^2+c_1&=\pm{t^2},\; \pm{(t^2-1)},\; \pm{(t^2+1)}\\[5pt] 
 y^2+c_2&=\pm{s^2},\; \pm{(1-s^2)},\; \pm{(1+s^2)}\\[2pt]
 \end{split} 
\end{equation} 
The case when $c=s^2-s^4$ corresponds to a square fixed-point and the case when $c=-1-s^2-s^4$ corresponds to a square in a 2-cycle. Therefore, up to reordering $c_1$ and $c_2$, we can break the classification of the integral solutions to \eqref{exceptionalproperties} into three main subcases: both $\phi_1$ and $\phi_2$ have square fixed points, $\phi_1$ has a square fixed point and $\phi_2$ has a square in a $2$-cycle, and both $\phi_1$ and $\phi_2$ have a square in a $2$-cycle. Moreover, each of these subcases are the topic of a subsection within this section. In total there are $48$ cases to be checked (modulo symmetry), and each is assigned a lemma below. Luckily, the same few techniques: Runge's method, reduction modulo $4$ or $8$, finding the integral points on some hyperelliptic quartics, and explicitly describing the Mordell Weil groups of some rank-zero elliptic curves, settle all of these problems.     
\subsection{Two Fixed Points} 
In this subsection, we assume that both $\phi_1$ and $\phi_2$ have square fixed points. Hence, $c_1=s^2-s^4$ and $c_2=t^2-t^4$ for some $s,t\in\mathbb{Z}$. Moreover, we assume that both $\phi_1(\mathbb{Z})\cap\PrePer(\phi_2,\mathbb{Q})$ and $\phi_2(\mathbb{Z})\cap\PrePer(\phi_1,\mathbb{Q})$ are non-empty sets. Therefore, Proposition \ref{prop:portraits} implies that 
\[x^2+s^2-s^4=\pm{t^2},\; \pm{(t^2-1)} \;\;\;\text{and}\;\;\; y^2+t^2-t^4=\pm{s^2},\; \pm{(s^2-1)}\]
for some $x,y\in\mathbb{Z}$. From here, we handle each of these $16$ cases (corresponding to a pair of preperiodic points from $\phi_1$ and $\phi_2$) in subsequent lemmas.     
\begin{lemma}\label{lem:case1.1} Let $x,y,s,t\in\mathbb{Z}$ be such that  
\begin{equation}\label{eq:case1.1}
x^2+s^2-s^4=t^2\qquad\text{and}\qquad y^2+t^2-t^4=s^2.
\end{equation} 
Then $(x,y,s,t)=(\pm{1},0,0,\pm{1})$, $(0,\pm{1},\pm{1},0)$, or $(\pm{u^2},\pm{u^2},\pm{u},\pm{u})$ for some $u\in\mathbb{Z}$.
\end{lemma}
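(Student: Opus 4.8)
The plan is to exploit the near-symmetry of the system and bound $x^2$ tightly between two consecutive squares. First I would rewrite the two equations as
\[x^2 = s^4 - s^2 + t^2 \qquad\text{and}\qquad y^2 = t^4 - t^2 + s^2,\]
and observe that the system is invariant under the involution $(x,s)\leftrightarrow(y,t)$. Hence, after possibly applying this involution, I may assume without loss of generality that $|s|\geq|t|$. The degenerate case $(s,t)=(0,0)$ forces $x=y=0$, which is the third family with $u=0$, so from now on I would assume $(s,t)\neq(0,0)$; combined with $|s|\geq|t|$ this gives $s\neq0$.

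The key step is a Runge-type squeeze on the first equation. Since $|t|\leq|s|$ we have $t^2-s^2\leq0$, so $x^2 = s^4 - s^2 + t^2 \leq s^4 = (s^2)^2$. On the other hand, since $(s,t)\neq(0,0)$ we have $s^2+t^2\geq1$, i.e.\ $t^2\geq 1-s^2$, which gives $x^2 = s^4 - s^2 + t^2 \geq s^4 - 2s^2 + 1 = (s^2-1)^2$. Because $s\neq0$, the integers $s^2-1$ and $s^2$ are consecutive, so there is no perfect square strictly between $(s^2-1)^2$ and $(s^2)^2$; the trapped value $x^2$ must therefore equal one of the two endpoints.

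It then remains to read off the solutions from the two endpoint cases. If $x^2=(s^2)^2$, then $t^2=s^2$, i.e.\ $|s|=|t|=:u$; substituting back into both equations gives $x^2=y^2=u^4$, hence $(x,y,s,t)=(\pm u^2,\pm u^2,\pm u,\pm u)$, the third family. If instead $x^2=(s^2-1)^2$, then $t^2=1-s^2$, forcing $s^2+t^2=1$; since $|s|\geq|t|$ this means $(s,t)=(\pm1,0)$, whence $x=0$ and $y^2=1$, giving $(0,\pm1,\pm1,0)$. Undoing the symmetric reduction recovers the remaining family $(\pm1,0,0,\pm1)$. I do not expect a genuine obstacle here: this is the simplest of the classification cases and needs no curve computation. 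The only points requiring care are verifying that the squeeze really traps $x^2$ between \emph{consecutive} squares (which is exactly why the case $s=0$ must be split off) and organizing the $|s|\leftrightarrow|t|$ symmetry so that neither the first nor the second family is dropped.
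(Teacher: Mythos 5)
Your proof is correct and uses essentially the same idea as the paper: a Runge-type squeeze trapping $x^2=s^4-s^2+t^2$ between the consecutive squares $(s^2-1)^2$ and $(s^2)^2$. The only (cosmetic) difference is that the paper runs two strict one-sided squeezes for $s^2>t^2$ and $t^2>s^2$ and handles $s=0$, $t=0$ separately via the factorization $y^2=t^2(t^2-1)$, whereas your non-strict two-sided squeeze recovers those degenerate solutions as the endpoint cases; both routes land on $s^2=t^2$ and the same three families.
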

\begin{proof} Suppose first that $s=0$. Then the equation on the right in \eqref{eq:case1.1} implies that 
\[y^2=t^2(t^2-1).\]
In particular, $t^2-1=z^2$ for some $z\in\mathbb{Z}$. Hence, $(t-z)(t+z)=1$. However, this implies that either $t-z=1=t+z$ or $t-z=-1=t+z$. Hence, $t=\pm{1}$ in either case. However, substituting in $t=\pm{1}$ and $s=0$ into the equation on the right of \eqref{eq:case1.1} implies that $y=0$. From here the left equation in \eqref{eq:case1.1} implies that $x=\pm{1}$. To summarize: we have shown that if $s=0$, then $(x,y,s,t)=(\pm{1},0,0,\pm{1})$. On the other hand, if we replace $(x,s)\leftrightarrow (y,t)$ and repeat the argument above, then we may similarly deduce that if $t=0$, then $(x,y,s,t)=(0,\pm{1},\pm{1},0)$. In particular, we may assume that neither $s$ nor $t$ is zero. Moreover, replacing $(s,t)$ with $(\pm{s},\pm{t})$ if need be, we may assume that $s$ and $t$ are both positive. Now suppose for a contradiction that $t^2>s^2\geq1$. Then $-t^2+1<0<s^2$ and \vspace{.1cm}  
\begin{equation*}
\begin{split}  
(t^2-1)^2&=t^4-t^2+(-t^2+1)\\[5pt]
&<t^4-t^2+s^2=(t^2)^2-(t^2-s^2)\\[5pt] 
&<(t^2)^2.
\end{split} 
\end{equation*} 
Therefore, the quantity $t^4-t^2+s^2$ is strictly between two consecutive integer squares and so cannot be a square itself. However, this contradicts the equation on the right of \eqref{eq:case1.1}. Moreover, a similar argument (swapping $s$ and $t$) implies that $s^2>t^2\geq1$ is impossible. Therefore, if $s$ and $t$ are non-zero, then $s^2=t^2$. In particular, we deduce that $(x,y,s,t)=(\pm{u^2},\pm{u^2},\pm{u},\pm{u})$ for some $u\in\mathbb{Z}$ in this case as claimed.            
\end{proof}   

\begin{lemma}\label{lem:case1.2} Let $x,y,s,t\in\mathbb{Z}$ be such that  
\begin{equation}\label{eq:case1.2}
x^2+s^2-s^4=t^2\qquad\text{and}\qquad y^2+t^2-t^4=-s^2.
\end{equation} 
Then $(x,y,s,t)=(\pm{1},0,0,\pm{1})$ or $(0,0,0,0)$.
\end{lemma}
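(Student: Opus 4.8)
The plan is to analyze the system
\[x^2+s^2-s^4=t^2\qquad\text{and}\qquad y^2+t^2-t^4=-s^2\]
by first disposing of the degenerate cases $s=0$ or $t=0$, and then showing that the second equation forces a contradiction unless everything vanishes. First I would set $s=0$: the left equation gives $x^2=t^2$, so $x=\pm t$, while the right equation becomes $y^2=t^4-t^2=t^2(t^2-1)$. As in the proof of Lemma \ref{lem:case1.1}, $t^2-1$ must be a perfect square, which forces $t=0,\pm1$; checking these against the equations yields $(x,y,s,t)=(\pm1,0,0,\pm1)$ and $(0,0,0,0)$. Symmetrically, setting $t=0$ gives $y^2=-s^2$, forcing $s=0$ and hence $(0,0,0,0)$. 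So it remains to show there are no solutions with $s,t$ both nonzero.

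The heart of the argument is the second equation $y^2+t^2-t^4=-s^2$, which I would rewrite as
\[y^2+s^2=t^4-t^2=t^2(t^2-1).\]
For $t=\pm1$ the right side is $0$, forcing $y=s=0$, contradicting $s\neq0$; so we may assume $|t|\geq2$, whence the right side is positive. The natural approach here is a squeezing (Runge-type) argument on $y^2=t^4-t^2-s^2$: since $s^2\geq1$ we have $y^2<t^4$, so $|y|\leq t^2-1$ (assuming $t>0$ after adjusting signs), giving $y^2\leq t^4-2t^2+1$ and therefore
\[s^2=t^4-t^2-y^2\geq t^4-t^2-(t^4-2t^2+1)=t^2-1.\]
Combined with the first equation $x^2=t^2-s^2+s^4$, I would look to bound $s^2$ from above as well and pin down $s^2$ between consecutive squares, exactly in the style of Lemma \ref{lem:case1.1}. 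The first equation shows $x^2=s^4-(s^2-t^2)$, so understanding the sign of $s^2-t^2$ and squeezing $x$ between $(s^2-1)^2$ and $(s^2)^2$ (or $(s^2)^2$ and $(s^2+1)^2$) should constrain $s$ and $t$ to lie in a small range, after which a finite check or a reduction modulo $4$ finishes the job.

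The main obstacle I anticipate is that the two equations couple $s$ and $t$ in opposite directions: the first wants $t^2$ close to $s^4$, while the second wants $t^4$ close to $s^2$, and reconciling these two squeezing constraints simultaneously is the delicate part. In particular, a single squeezing inequality on one equation may not immediately yield a contradiction, so I expect to combine both: from the second equation $s^2\geq t^2-1$ (so $s^2\geq t^2-1$, i.e.\ $s$ is at least as large as $t$ roughly), while the first equation $x^2=t^2+s^4-s^2$ with $s\geq t$ large should force $x^2$ strictly between $(s^2-1)^2$ and $(s^2)^2$ for $s$ sufficiently large, giving the contradiction. The residual small cases (small $|s|,|t|$) I would clear by a direct reduction modulo $4$ or $8$, noting that squares are $\equiv0,1\pmod4$, which is the same elementary tool the authors use repeatedly elsewhere; the likely clean outcome is that $y^2+s^2=t^2(t^2-1)$ simply has no solutions with $t^2(t^2-1)$ a sum of two nonzero squares in the required range, leaving only the trivial solution $(0,0,0,0)$ among the $s,t$ both-nonzero regime.
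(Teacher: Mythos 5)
Your overall strategy --- dispose of the degenerate cases $s=0$ and $t=0$, then squeeze a quartic expression strictly between consecutive integer squares --- is the same as the paper's, and your treatment of the degenerate cases and of the regime $s^2>t^2$ is correct (in fact the squeeze $(s^2-1)^2<s^4-s^2+t^2<(s^2)^2$ holds for all $1\leq t^2<s^2$, with no need for $s$ to be ``sufficiently large''). The genuine gap is the case $s^2=t^2$. Your Runge bound from the second equation gives $s^2\geq t^2-1$, and $s^2=t^2-1$ is excluded for $s\neq0$ by factoring $(t-s)(t+s)=1$; but $s^2=t^2$ survives both of your squeezes, since the first equation then reads $x^2=s^4$ exactly, so $x=\pm s^2$ and there is nothing strictly between consecutive squares to exploit. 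This is not a ``residual small case'' to be cleared by a finite check or a blanket congruence --- it is an infinite family of candidate pairs $(s,t)$. What closes it (and what the paper does) is to return to the second equation: with $s^2=t^2$ it becomes $y^2=t^4-2t^2=t^2(t^2-2)$, so for $t\neq0$ one needs $t^2-2$ to be a perfect square, which is impossible modulo $4$. Your instinct to finish with a mod-$4$ reduction is the right one, but it must be aimed at this specific infinite family rather than at ``small $|s|,|t|$''; with that one observation supplied, your argument is complete and essentially coincides with the paper's.
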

\begin{proof} 
If $s=0$, then $x^2=t^2$ and $y^2=t^2(t^2-1)$. In particular, $t^2-1$ is a square so that $t=\pm{1}$; see the proof of Lemma \ref{lem:case1.1} above. Therefore, $(x,y,s,t)=(\pm{1},0,0,\pm{1})$ in this case. Likewise if $t=0$, then $y^2=-s^2$, which immediately implies that $y=s=0$. Therefore, $(x,y,s,t)=(0,0,0,0)$ in this case. Hence, we may assume that neither $s$ nor $t$ is zero. Now suppose first that $s^2>t^2\geq1$. Then, similar to the proof of Lemma \ref{lem:case1.1}, we have that 
\begin{equation*} 
\begin{split}
(s^2-1)^2&=s^4-s^2-s^2+1\\[5pt] 
&<s^4-s^2+t^2\\[5pt]
&=s^4-(s^2-t^2)\\[5pt] 
&<(s^2)^2.   
\end{split} 
\end{equation*}
Hence, $s^4-s^2+t^2$ is strictly between two consecutive integer squares and so cannot be a square itself. However, this contradicts the equation on the left of \eqref{eq:case1.2}. Suppose next that $t^2>s^2\geq1$. Note that if $t^2-s^2=1$, then either $t-s=1=t+s$ or $t-s=-1=t+s$. But in either case, this implies that $s=0$, a contradiction. Therefore, we may assume that $t^2-s^2>1$. Hence, $-t^2+1<-s^2$ and  
\begin{equation*}\label{eq:nosolutions2} 
\begin{split}
(t^2-1)^2&=t^4-t^2+(-t^2+1)\\[5pt] 
&<t^4-t^2-s^2=t^4-(t^2+s^2)\\[5pt]
&<t^4=(t^2)^2.   
\end{split} 
\end{equation*}
Therefore, $t^4-t^2-s^2$ is strictly between two consecutive integer squares and so cannot be a square itself. However, this contradicts the equation on the right of \eqref{eq:case1.2}. In particular, if $s$ and $t$ are non-zero, then $s^2=t^2$. But then the right side of of \eqref{eq:case1.2} implies that 
\begin{equation}\label{eq:nosolutions3} 
y^2=t^4-t^2-s^2=t^4-2t^2=t^2(t^2-2),
\end{equation} 
so that $t^2-2=z^2$ for some $z\in\mathbb{Z}$ (since $t\neq0$). However, this equation has no solutions modulo $4$. Therefore, we have only the solutions corresponding to $s=0$ and $t=0$ already listed above.          
\end{proof} 

\begin{lemma}\label{lem:case1.3} Let $x,y,s,t\in\mathbb{Z}$ be such that  
\begin{equation}\label{eq:case1.3}
x^2+s^2-s^4=t^2\qquad\text{and}\qquad y^2+t^2-t^4=1-s^2.
\end{equation} 
Then $(x,y,s,t)=(\pm{1},\pm{1},0,\pm{1})$, $(0,0,\pm{1},0)$ or $(\pm{u^2},\pm{(u^2-1)},\pm{u},\pm{u})$ for some $u\in\mathbb{Z}$.
\end{lemma}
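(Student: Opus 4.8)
The plan is to follow the template of Lemmas~\ref{lem:case1.1} and~\ref{lem:case1.2}: first dispose of the degenerate cases $s=0$ and $t=0$ directly, then assume $s,t\neq0$ and reduce to $s,t\geq1$ (only $s^2,s^4,t^2,t^4$ and $x^2,y^2$ occur, so all four signs are free), and finally run a ``trapped strictly between consecutive squares'' argument to force $s=t$.

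For $s=0$ the left equation of \eqref{eq:case1.3} gives $x=\pm t$, while the right equation gives $y^2=t^4-t^2+1$; since
\[ (t^2-1)^2=t^4-2t^2+1<t^4-t^2+1<t^4=(t^2)^2 \]
for every $\abs{t}\geq2$, the only admissible values are $t\in\{0,\pm1\}$, producing $(0,\pm1,0,0)$ and $(\pm1,\pm1,0,\pm1)$. For $t=0$ the left equation reads $x^2=s^2(s^2-1)$, so $s^2-1$ must be a square, which forces $s\in\{0,\pm1\}$; coupling this with the right equation $y^2=1-s^2$ yields $(0,0,\pm1,0)$ together with $(0,\pm1,0,0)$ once more.

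The substance of the proof is the range $s,t\geq1$. The one genuinely new feature compared with Lemma~\ref{lem:case1.1} is that the system is \emph{not} symmetric under $(x,s)\leftrightarrow(y,t)$, owing to the constant $1$ on the right of \eqref{eq:case1.3}; accordingly I will pit each equation against one ordering of $s$ and $t$. If $t>s$, I rewrite the right equation as
\[ y^2=(t^2-1)^2+(t^2-s^2)=t^4-(t^2+s^2-1), \]
and, because $t^2-s^2>0$ and $t^2+s^2-1>0$, this pins $y^2$ strictly between $(t^2-1)^2$ and $(t^2)^2$, a contradiction. If instead $s>t$, I rewrite the left equation as
\[ x^2=(s^2-1)^2+(s^2+t^2-1)=s^4-(s^2-t^2), \]
and, because $s^2-t^2>0$ and $s^2+t^2-1>0$, this pins $x^2$ strictly between $(s^2-1)^2$ and $(s^2)^2$, again impossible. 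Hence $s=t=u$, and back-substitution gives $x^2=u^4$ and $y^2=(u^2-1)^2$, i.e.\ the family $(\pm u^2,\pm(u^2-1),\pm u,\pm u)$.

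I expect the difficulty here to be bookkeeping rather than depth. The points requiring care are: selecting the correct equation for each ordering (the right one when $t>s$, the left one when $s>t$); checking that both sandwiching inequalities are \emph{strict} under the standing hypotheses $s,t\geq1$; and verifying at the end that the boundary solutions from the $s=0$ and $t=0$ analyses are precisely the first two listed tuples together with the $u=0$ member of the third, so that the $s=t\neq0$ case completes the list with no solution double-counted or omitted.
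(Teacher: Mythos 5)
Your proof is correct, and its overall architecture (dispose of $s=0$ and $t=0$, then force $s^2=t^2$ by trapping a quartic expression strictly between consecutive squares) matches the paper's. The one genuine divergence is the subcase $s=0$: the paper resolves $y^2=t^4-t^2+1$ by passing to the birationally equivalent elliptic curve $E\colon Y^2=X^3+2X^2-3X$, computing $E(\mathbb{Q})\cong\mathbb{Z}/2\mathbb{Z}\times\mathbb{Z}/4\mathbb{Z}$ with Magma, and reading off all eight rational points of the quartic, whereas you observe that for $\lvert t\rvert\geq 2$ one has $(t^2-1)^2<t^4-t^2+1<(t^2)^2$, so the only integral solutions have $t\in\{0,\pm1\}$. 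Your route is strictly more elementary and self-contained; it buys a computer-free proof of this lemma at the cost of determining only the \emph{integral} points of that quartic rather than all rational points (which is all that is needed here, since $t\in\mathbb{Z}$, but the paper's stronger rational-point computation is reused nowhere else for this particular curve, unlike the curve $y^2=t^4-1$ treated in Lemma \ref{lem:case1.4}). The remaining steps --- the asymmetric pairing of the right equation with $t^2>s^2$ and the left equation with $s^2>t^2$, the strictness checks using $s^2+t^2-1>0$, and the back-substitution $x^2=u^4$, $y^2=(u^2-1)^2$ when $s^2=t^2$ --- coincide with the paper's argument and are carried out correctly.
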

\begin{proof} 
Suppose first that $s=0$, so that $y^2=t^4-t^2+1$. However, the hyperelliptic curve $C$ with affine model $y^2=t^4-t^2+1$ is birational over $\mathbb{Q}$ to the elliptic curve $E: Y^2 = X^3 + 2X^2 - 3X$, and we compute with Magma that $E(\mathbb{Q})\cong\mathbb{Z}/2\mathbb{Z}\times\mathbb{Z}/4\mathbb{Z}$. Hence $\#C(\mathbb{Q})=8$, and excluding the two points at infinity of $C$, we deduce that $(t,y)=\{(\pm{1},\pm{1}), (0,\pm{1})\}$. In particular, we obtain the points $(\pm{1},\pm{1},0,\pm{1})$ and the points $(0,\pm{1},0,0)=(\pm{u^2},\pm{(u^2-1)},\pm{u},\pm{u})$ for $u=0$ listed above. Likewise, if $t=0$, then $x^2=s^2(s^2-1)$. Moreover, since we have classified the points with $s$-coordinate zero, we may assume that $s\neq0$. Thus $s^2-1=z^2$ for some $z\in\mathbb{Z}$. In particular, $s-z=1=s+z$ or $s-z=-1=s+z$. In either case, $z=0$ and $s=\pm{1}$. But then we obtain the points $(x,y,s,t)=(0,0,\pm{1},0)$ listed above. From here we assume that $s$ and $t$ are both non-zero. Now suppose that $t^2>s^2$. Then $t^2+s^2>2s^2\geq2$ and  
\begin{equation*}
\begin{split}
(t^2-1)^2&=t^4-t^2-t^2+1\\[5pt] 
&< t^4-t^2-s^2+1\\[5pt] 
&=t^4-(t^2+s^2-1)\\[5pt] 
&<t^4=(t^2)^2. 
\end{split} 
\end{equation*} 
Hence, $t^4-t^2-s^2+1$ is strictly between consecutive integer squares and so cannot be a square itself. But this contradicts the right side of \eqref{eq:case1.3}. Likewise, if $s^2>t^2>0$ then $s^2-1>0$ and  
\begin{equation*}
\begin{split}
(s^2-1)^2&=(s^4-s^2)-(s^2-1)\\[5pt] 
&< s^4-s^2+t^2\\[5pt] 
&=s^4-(s^2-t^2)\\[5pt] 
&<s^4=(s^2)^2 
\end{split} 
\end{equation*} 
Hence, $s^4-s^2+t^2$ is strictly between consecutive integer squares and so cannot be a square itself. But this contradicts the left side of \eqref{eq:case1.3}. Therefore, $s^2=t^2$ from which it easily follows that $(x,y,s,t)=(\pm{u^2},\pm{(u^2-1)},\pm{u},\pm{u})$ for some $u\in\mathbb{Z}$.  
\end{proof} 

\begin{lemma}\label{lem:case1.4} Let $x,y,s,t\in\mathbb{Z}$ be such that  
\begin{equation}\label{eq:case1.4}
x^2+s^2-s^4=t^2\qquad\text{and}\qquad y^2+t^2-t^4=-(1-s^2).
\end{equation} 
Then $(x,y,s,t)=(\pm{1},0,\pm{1},\pm{1})$ or $(0,0,\pm{1},0)$.
\end{lemma}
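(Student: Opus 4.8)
The plan is to follow the template already established in Lemmas \ref{lem:case1.1}--\ref{lem:case1.3}. First I would rewrite the system \eqref{eq:case1.4} as $x^2 = s^4 - s^2 + t^2$ and $y^2 = t^4 - t^2 + s^2 - 1$, dispose of the two boundary cases $s=0$ and $t=0$ directly, and then force $s^2 = t^2$ in the remaining range by trapping one side of the system strictly between two consecutive integer squares. Since both equations involve only $s^2, s^4, t^2, t^4$, I may assume throughout that $s,t \geq 0$ and reinstate the four independent sign choices at the very end.

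First I would treat $s=0$. Here the second equation reads $y^2 = t^4 - t^2 - 1$; direct substitution kills $t = 0, \pm 1$ (each forces $y^2 = -1$), and for $|t| \geq 2$ one checks $(t^2-1)^2 < t^4 - t^2 - 1 < (t^2)^2$, so $y^2$ lies strictly between consecutive squares and no solution survives. Thus $s=0$ contributes nothing. Next, $t=0$ reduces the second equation to $y^2 = s^2 - 1$, i.e. $(s-y)(s+y)=1$, forcing $s = \pm 1$ and $y=0$; the first equation then gives $x^2 = s^4 - s^2 = 0$, so $x=0$, yielding the family $(0,0,\pm 1, 0)$.

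It remains to handle $s,t \geq 1$. Assuming $t^2 > s^2$, I would squeeze the second equation: $y^2 - (t^2-1)^2 = t^2 + s^2 - 2 > 0$ (since $t \geq 2$ in this case) while $y^2 - (t^2)^2 = s^2 - t^2 - 1 < 0$, a contradiction. Symmetrically, assuming $s^2 > t^2$ and squeezing the first equation gives $(s^2-1)^2 < x^2 < (s^2)^2$, again impossible. Hence $s = t$. Substituting back, the first equation forces $x = \pm s^2$ and the second becomes $y^2 = s^4 - 1$, i.e. $(s^2 - y)(s^2 + y) = 1$, which forces $s^2 = 1$ and $y = 0$. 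Reinstating signs produces exactly $(\pm 1, 0, \pm 1, \pm 1)$, completing the stated list.

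I do not expect a serious obstacle: unlike the genus-one situation in Lemma \ref{lem:case1.3}, every subcase here collapses under elementary squeezing together with the factorization of $N^2 - 1 = 1$. The only points demanding genuine care are getting the inequalities strict in the correct integer ranges --- in particular, observing that it is $t \geq 2$ (not merely $t \geq 1$) that makes the left-hand squeeze work in the $t^2 > s^2$ case --- and verifying carefully that the boundary case $s=0$ yields nothing, so that, unlike the earlier lemmas, the final answer contains no points with vanishing $s$.
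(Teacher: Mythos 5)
Your proof is correct, and its skeleton is the same as the paper's: dispose of $s=0$ and $t=0$, then trap $x^2$ or $y^2$ strictly between the consecutive squares $(s^2-1)^2,(s^2)^2$ or $(t^2-1)^2,(t^2)^2$ to force $s^2=t^2$, and finish by solving the degenerate system. Where you diverge is in two sub-steps, and in both you are more elementary than the paper. For $s=0$ the paper observes that $y^2=t^4-t^2-1$ is insoluble modulo $4$, whereas you check $t=0,\pm1$ by hand and squeeze $(t^2-1)^2<t^4-t^2-1<(t^2)^2$ for $|t|\ge 2$; both are fine. More substantively, on the diagonal $s^2=t^2$ the paper treats $y^2=t^4-1$ as a genus-one curve, passes to the elliptic curve $Y^2=X^3+4X$, and invokes a Magma computation of the Mordell--Weil group to find all \emph{rational} points, while you simply factor $(s^2-y)(s^2+y)=1$ over $\mathbb{Z}$ to get $y=0$, $s^2=1$. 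Since the lemma only concerns integral $y$, your factorization suffices and removes the computer-algebra dependency; the only thing the paper's stronger rational-point determination buys is that it is quoted again in Lemmas \ref{lem:case1.12} and \ref{lem:case1.16}, where the same curve reappears (though there too only integral points are needed, so your argument would serve equally well). All inequalities in your squeezes are correctly strict in the stated ranges, and your final list matches the lemma.
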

\begin{proof} If $s=0$, then $y^2+t^2-t^4=-1$. However, this equation has no solutions $(t,y)\in(\mathbb{Z}/4\mathbb{Z})^2$. Therefore, $s$ is non-zero. On the other hand if $t=0$, then $x^2=s^2(s^2-1)$. In particular (since $s\neq0$), $s^2-1=z^2$ for some $z\in\mathbb{Z}$. But then (as in various proofs above) $s=\pm{1}$ and $z=0$. Hence, we obtain the points $(x,y,s,t)=(0,0,\pm{1},0)$ listed above.  From here we assume that $s$ and $t$ are both non-zero. Now suppose that $t^2>s^2\geq1$. Then  $2<t^2+s^2$ and \vspace{.1cm}  
\begin{equation*}
\begin{split}
(t^2-1)^2&=t^4-t^2+s^2+1-(t^2+s^2) \\[5pt] 
&<t^4-t^2+s^2+1-2\\[5pt]
&=t^4-t^2+s^2-1\\[5pt]
&<t^4-(t^2-s^2)\\[5pt]
&< t^4=(t^2)^2. 
\end{split} 
\end{equation*} 
Hence, $t^4-t^2+s^2-1$ cannot be an integral square, contradicting the right side of \eqref{eq:case1.4}. However, if $s^2>t^2\geq1$ then $1-s^2<0<t^2$ and  
\begin{equation*}
\begin{split} 
(s^2-1)^2&=s^4-s^2+(1-s^2)\\[5pt] 
&<s^4-s^2+t^2\\[5pt] 
&=s^4-(s^2-t^2)\\[5pt] 
&<s^4=(s^2)^2.
\end{split} 
\end{equation*}
Hence, $s^4-s^2+t^2$ cannot be an integral square, contradicting the left side of \eqref{eq:case1.4}. Therefore, we deduce that $s^2=t^2$. However, then the right side of \eqref{eq:case1.4} implies that $y^2=t^4-1$. However, the hyperelliptic curve with affine model $y^2=t^4-1$ is birational over $\mathbb{Q}$ to the elliptic curve $E: Y^2 = X^3 + 4X$, and we compute with Magma that $E(\mathbb{Q})\cong\mathbb{Z}/4\mathbb{Z}$. Hence, $\#C(\mathbb{Q})=4$ and (disregarding the two rational points at infinity) we have that $(t,y)=(\pm{1},0)$. However, we then have that $(x,y,s,t)=(\pm{1},0,\pm{1},\pm{1})$, one of the points listed above.   
\end{proof} 

\begin{lemma}\label{lem:case1.5} Let $x,y,s,t\in\mathbb{Z}$ be such that  
\begin{equation}\label{eq:case1.5}
x^2+s^2-s^4=-t^2\qquad\text{and}\qquad y^2+t^2-t^4=s^2.
\end{equation}
Then $(x,y,s,t)=(0,\pm{1},\pm{1},0)$ or $(0,0,0,0)$.
\end{lemma}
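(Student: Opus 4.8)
The plan is to follow the same elementary template used in Lemmas \ref{lem:case1.1}--\ref{lem:case1.4}: first dispose of the degenerate cases $s=0$ and $t=0$, then assume $s,t$ are both nonzero and use a ``squeezing between consecutive squares'' argument to force $s^2=t^2$, and finally rule out $s^2=t^2$ by a congruence obstruction. It is worth noting at the outset that, unlike the neighboring Lemmas \ref{lem:case1.3} and \ref{lem:case1.4}, I do not expect to need any elliptic curve computation here; rewriting the two defining equations as
\[x^2+t^2=s^2(s^2-1)\qquad\text{and}\qquad y^2=t^4-t^2+s^2\]
makes them amenable to purely elementary bounding.

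First I would treat $s=0$: the left equation collapses to $x^2+t^2=0$, forcing $x=t=0$, and then the right equation gives $y=0$, producing the solution $(0,0,0,0)$. Next, for $t=0$ with $s\neq0$, the left equation reads $x^2=s^2(s^2-1)$, so $s^2-1$ must be a perfect square; the standard factorization $(s-z)(s+z)=1$ then forces $s=\pm1$ and $x=0$, after which the right equation gives $y^2=s^2=1$, i.e.\ the solutions $(0,\pm1,\pm1,0)$. Having recorded these, I may assume $s,t$ are both nonzero, and since only $s^2$ and $t^2$ occur I may take $s,t\geq1$.

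The heart of the argument is then a pair of squeezing estimates. If $t^2>s^2$, I would apply the right equation $y^2=t^4-t^2+s^2$ and check that $(t^2-1)^2<y^2<(t^2)^2$; here the lower bound holds because $y^2-(t^2-1)^2=s^2+t^2-1\geq1>0$, and the upper bound because $y^2=t^4-(t^2-s^2)<t^4$. Thus $y^2$ lies strictly between consecutive squares, a contradiction. If instead $s^2>t^2$, I would use the left equation in the form $x^2=s^4-s^2-t^2$: the upper bound $x^2<(s^2)^2$ is immediate since $s^2+t^2>0$, while $x^2-(s^2-1)^2=s^2-t^2-1\geq0$. Splitting off the boundary $s^2=t^2+1$ (which gives $(s-t)(s+t)=1$ and hence $s+t=1$, impossible for $s,t\geq1$) leaves $s^2>t^2+1$, where the squeeze $(s^2-1)^2<x^2<(s^2)^2$ again yields a contradiction. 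These two estimates together force $s^2=t^2$.

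Finally, with $s^2=t^2$ and $s\neq0$, the left equation becomes $x^2=s^4-2s^2=s^2(s^2-2)$, so $s^2-2$ must be a perfect square; as in Lemma \ref{lem:case1.2} this is impossible, since $s^2-w^2=2$ has no solutions modulo $4$. Hence there are no solutions with both $s,t$ nonzero, and the complete list is the one claimed. I expect the only place requiring care to be the bookkeeping of the boundary cases $s^2=t^2\pm1$ inside the squeezing step and ensuring each inequality is strict where needed; there is no genuine arithmetic-geometric obstacle in this case.
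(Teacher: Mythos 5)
Your proof is correct, but the paper dispatches this lemma in one line: it observes that the substitution $(x,y,s,t)\rightarrow(y,x,t,s)$ transforms the system \eqref{eq:case1.2} of Lemma \ref{lem:case1.2} into the system \eqref{eq:case1.5}, so the solution set here is obtained from that of Lemma \ref{lem:case1.2} by the same swap. What you have written out is, in effect, the proof of Lemma \ref{lem:case1.2} transported through that substitution: your $s=0$ and $t=0$ cases, the two squeezing estimates $(t^2-1)^2<y^2<(t^2)^2$ and $(s^2-1)^2<x^2<(s^2)^2$ (including the separate treatment of the boundary $s^2-t^2=1$ via the factorization $(s-t)(s+t)=1$), and the final reduction of $s^2=t^2$ to the insoluble congruence $s^2-2\equiv z^2\pmod 4$ all appear verbatim in the paper's argument for Lemma \ref{lem:case1.2}, with the roles of $s$ and $t$ exchanged. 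Every step checks out, and your prediction that no elliptic-curve input is needed is right. The only thing you miss is the economy of noticing the symmetry, which the paper uses repeatedly in this section (Lemmas \ref{lem:case1.5}, \ref{lem:case1.9}, \ref{lem:case1.10}, \ref{lem:case1.13}--\ref{lem:case1.15}, etc.) to halve the number of cases that must be argued from scratch.
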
  
\begin{proof}
Substitute $(x,y,s,t)\rightarrow (y,x,t,s)$ and apply Lemma \ref{lem:case1.2}. 
\end{proof} 

\begin{lemma}\label{lem:case1.6} Let $x,y,s,t\in\mathbb{Z}$ be such that  
\begin{equation}\label{eq:case1.6}
x^2+s^2-s^4=-t^2\qquad\text{and}\qquad y^2+t^2-t^4=-s^2.
\end{equation} 
Then $(x,y,s,t)=(0,0,0,0)$.
\end{lemma}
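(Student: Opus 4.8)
The plan is to mirror the squeeze-between-consecutive-squares technique used in Lemmas \ref{lem:case1.1}--\ref{lem:case1.4}, exploiting the symmetry of the system \eqref{eq:case1.6} under the swap $(x,s)\leftrightarrow(y,t)$. First I would dispose of the degenerate cases. If $s=0$, then the left equation of \eqref{eq:case1.6} reads $x^2=-t^2$, forcing $x=t=0$, and then the right equation gives $y^2=0$; hence $(x,y,s,t)=(0,0,0,0)$. The case $t=0$ follows identically after the swap $(x,s)\leftrightarrow(y,t)$. So from here on I would assume $s,t\neq0$, and since the equations depend only on $s^2$ and $t^2$, I may take $s,t>0$, so that $s^2,t^2\geq1$.

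The heart of the argument is to show these assumptions force $s^2=t^2$. Rewriting the equations as $x^2=s^4-s^2-t^2$ and $y^2=t^4-t^2-s^2$, I would run the usual squeeze. Since $s^2+t^2>0$, I always have $x^2<(s^2)^2$; and computing $x^2-(s^2-1)^2=s^2-t^2-1$ shows that if $s^2-t^2\geq2$, then $(s^2-1)^2<x^2<(s^2)^2$, trapping $x^2$ strictly between consecutive integer squares, which is impossible. By the symmetry of \eqref{eq:case1.6}, the identical computation applied to $y^2$ rules out $t^2-s^2\geq2$. Hence $|s^2-t^2|\leq1$. The value $|s^2-t^2|=1$ is then excluded because $s^2-t^2=1$ factors as $(s-t)(s+t)=1$, forcing $t=0$ (and symmetrically $t^2-s^2=1$ forces $s=0$), both contrary to our standing assumption. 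This leaves only $s^2=t^2$.

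Finally, substituting $s^2=t^2$ into the right equation of \eqref{eq:case1.6} gives $y^2=t^4-2t^2=t^2(t^2-2)$; since $t\neq0$, this requires $t^2-2$ to be a perfect square, exactly the obstruction already met in the proof of Lemma \ref{lem:case1.2}: the congruence $t^2-2\equiv z^2\pmod 4$ has no solutions, since squares are $0$ or $1$ modulo $4$. Thus no solution with $s,t\neq0$ exists, and $(0,0,0,0)$ is the only solution.

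I expect no serious obstacle here; this lemma is among the cleanest in the list precisely because the right-hand sides $-t^2$ and $-s^2$ are so restrictive, so no hyperelliptic curve or Mordell--Weil computation is needed. The only point that demands a little care is checking that the squeeze bounds remain valid at the boundary, for instance that $s^2-1\geq0$ so that $(s^2-1)^2$ is genuinely the square immediately below $(s^2)^2$; but this is automatic once $s\neq0$, since then $s^2-t^2\geq2$ together with $t^2\geq1$ already forces $s^2\geq3$.
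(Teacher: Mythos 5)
Your proof is correct and follows essentially the same route as the paper's: handle $s=0$ and $t=0$ directly, squeeze $s^4-s^2-t^2$ (equivalently $t^4-t^2-s^2$) strictly between $(s^2-1)^2$ and $(s^2)^2$ when $|s^2-t^2|\geq 2$, exclude $|s^2-t^2|=1$ by factoring, and then kill the case $s^2=t^2$ via $t^2-2\equiv z^2\pmod 4$ having no solutions. The only cosmetic difference is that you run the squeeze on the left equation where the paper runs it on the right, which is immaterial by the symmetry of the system.
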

\begin{proof}
Suppose first that $s=0$. Then $x^2=-t^2$ so that $x=0=t$. But then $y=0$ also by the right side of \eqref{eq:case1.6}. Therefore, $(x,y,s,t)=(0,0,0,0)$ in this case. Moreover, the same argument swapping $s$ and $t$, shows that $(x,y,s,t)=(0,0,0,0)$ whenever $t=0$. Therefore, we may assume that both $s$ and $t$ are non-zero. Suppose for a contradiction that $t^2>s^2\geq1$. Note that if $t^2-s^2=1$, then either $t-s=1=t+s$ or $t-s=-1=t+s$. But in either case, this implies that $s=0$, a contradiction. Therefore, we may assume that $t^2-s^2>1$. Hence, $-t^2+1<-s^2$ and  
\begin{equation*} 
\begin{split}
(t^2-1)^2&=t^4-t^2+(-t^2+1)\\[5pt]
&<t^4-t^2-s^2=t^4-(t^2+s^2)\\[5pt]
&<t^4=(t^2)^2  
\end{split}  
\end{equation*} 
Therefore, $t^4-t^2-s^2$ cannot be an integral square, contradicting the right side of \eqref{eq:case1.6}. However, these equations are symmetric in $s$ and $t$, and therefore $s^2>t^2$  is also impossible. Hence, for non-zero $s$ and $t$, it must be the case that $s^2=t^2$. But then $x^2=s^4-2s^2=s^2(s^2-2)$, which implies that $s^2-2=z^2$ for some $z\in\mathbb{Z}$ (since $s$ is not zero). But this equation has no solutions mod $4$. Therefore, $s=t=0$ gives the only possible solution. 
\end{proof} 

\begin{lemma}\label{lem:case1.7} Let $x,y,s,t\in\mathbb{Z}$ be such that  
\begin{equation}\label{eq:case1.7}
x^2+s^2-s^4=-t^2\qquad\text{and}\qquad y^2+t^2-t^4=1-s^2.
\end{equation} 
Then $(x,y,s,t)=(0,\pm{1},0,0)$ or $(0,0,\pm{1},0)$. 
\end{lemma}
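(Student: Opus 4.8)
The plan is to follow the same template as the preceding lemmas in this subsection: dispatch the degenerate cases $s=0$ and $t=0$ directly, then rule out all solutions with both $s,t$ nonzero by trapping a square strictly between two consecutive squares. First I would handle the degenerate cases. If $s=0$, the left equation of \eqref{eq:case1.7} reads $x^2=-t^2$, forcing $x=t=0$, and then the right equation gives $y^2=1$, so $(x,y,s,t)=(0,\pm 1,0,0)$. If instead $t=0$, the right equation becomes $y^2=1-s^2\geq 0$, so $s\in\{0,\pm 1\}$; the value $s=0$ returns to the previous case, while $s=\pm 1$ gives $y=0$ and then $x^2=s^2(s^2-1)=0$, yielding $(0,0,\pm 1,0)$. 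This recovers exactly the claimed solution list, so it remains to show there are no solutions with both $s$ and $t$ nonzero.

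Assuming $s,t\neq 0$, I would run the squeezing argument, splitting according to the size of $s^2$ relative to $t^2$ and using whichever of the two equations is favorable in each range. Writing $x^2=s^4-s^2-t^2$, one checks $x^2<(s^2)^2$ always (since $s^2+t^2>0$) and $x^2-(s^2-1)^2=s^2-t^2-1$; hence when $s^2\geq t^2+2$ we get $(s^2-1)^2<x^2<(s^2)^2$, which is impossible. In the opposite direction, writing $y^2=t^4-t^2+1-s^2$, one checks $y^2<(t^2)^2$ (since $s^2+t^2\geq 2>1$) and $y^2-(t^2-1)^2=t^2-s^2$; hence when $t^2>s^2$ we get $(t^2-1)^2<y^2<(t^2)^2$, again impossible. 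These two squeezes eliminate every case except the two boundary regimes $s^2=t^2+1$ and $s^2=t^2$.

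The two remaining regimes I would handle by hand, exactly as in the earlier lemmas. If $s^2=t^2+1$, then $(s-t)(s+t)=1$ forces $s=\pm 1$ and $t=0$, contradicting $t\neq 0$. If $s^2=t^2$, the left equation reduces to $x^2=s^4-2s^2=s^2(s^2-2)$, so (as $s\neq 0$) the quantity $s^2-2$ would have to be a perfect square; but a difference of two integer squares is never $\equiv 2\pmod 4$, so this is impossible. Thus no solution has both $s$ and $t$ nonzero, which completes the proof. I expect no genuine obstacle here: the only mild care needed is to verify that the two squeeze inequalities are strict and that, once the single equality $s^2=t^2$ and the single near-equality $s^2=t^2+1$ are excised by the elementary arguments above, the two squeezes jointly cover all of $s^2>t^2$ and $t^2>s^2$.
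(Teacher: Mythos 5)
Your proposal is correct and follows essentially the same route as the paper's proof: dispatch $s=0$ and $t=0$ directly, squeeze $s^4-s^2-t^2$ between $(s^2-1)^2$ and $(s^2)^2$ when $s^2>t^2+1$ and $t^4-t^2+1-s^2$ between $(t^2-1)^2$ and $(t^2)^2$ when $t^2>s^2$, excise $s^2-t^2=1$ by factoring, and kill $s^2=t^2$ via $x^2=s^2(s^2-2)$ and a mod $4$ (equivalently, difference-of-squares) argument. No gaps.
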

\begin{proof} If $t=0$, then $y^2+s^2=1$. Hence, $(s,y)=(0,\pm{1})$ or $(\pm{1},0)$ and we obtain $(x,y,s,t)=(0,\pm{1},0,0)$ or $(0,0,\pm{1},0)$ listed above. Likewise if $s=0$, then $x^2=-t^2$ and $x=0=t$. But then $(x,y,s,t)=(0,\pm{1},0,0)$, and we obtain a solution already recorded. Therefore, we may assume that neither $s$ nor $t$ is zero. Now suppose that $t^2>s^2\geq1$. Then $t^2+s^2\geq2$ and 
\begin{equation*}
\begin{split}
(t^2-1)^2&=t^4-t^2-t^2+1\\[5pt] 
&<t^4-t^2-s^2+1\\[5pt]
&=t^4-(t^2+s^2-1)\\[5pt]
&< t^4=(t^2)^2. 
\end{split} 
\end{equation*} 
Hence, $t^4-t^2-s^2+1$ cannot be an integral square, contradicting the right side of \eqref{eq:case1.7}. Likewise, assume that $s^2>t^2$. Note that if $s^2-t^2=1$, then either $s-t=1=s+t$ or $s-t=-1-s+t$. But in either case, $t$ must be zero, a contradiction. Hence, $s^2-t^2>1$ so that $-s^2+1<-t^2$. However, we then have that   
\begin{equation*}
\begin{split} 
(s^2-1)^2&=s^4-s^2+(-s^2+1)\\[5pt] 
&<s^4-s^2-t^2\\[5pt] 
&=s^4-(s^2+t^2)\\[5pt] 
&<s^4=(s^2)^2.
\end{split} 
\end{equation*}
Hence, $s^4-s^2-t^2$ cannot be an integral square, contradicting the left side of \eqref{eq:case1.7}. Therefore, it must be the case that $s^2=t^2$. But in this case, the left side of \eqref{eq:case1.7} becomes $x^2=s^2(s^2-2)$. In particular, since $s\neq0$ we have that $s^2-2=z^2$ for some $z\in\mathbb{Z}$. However, this equation has no solutions modulo $4$. Therefore, we have found all solutions.         
\end{proof} 

\begin{lemma}\label{lem:case1.8} Let $x,y,s,t\in\mathbb{Z}$ be such that  
\begin{equation}\label{eq:case1.8}
x^2+s^2-s^4=-t^2\qquad\text{and}\qquad y^2+t^2-t^4=-(1-s^2).
\end{equation} 
Then $(x,y,s,t)=(0,0,\pm{1},0)$. 
\end{lemma}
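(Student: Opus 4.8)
The plan is to follow the template already used in Lemmas \ref{lem:case1.1}--\ref{lem:case1.7}: dispose of the degenerate cases $s=0$ and $t=0$ directly, then assume $s,t$ are both nonzero and use the ``trapped strictly between consecutive squares'' technique to force $s^2=t^2$, and finally treat the diagonal subcase $s^2=t^2$ by hand. It is convenient to first rewrite \eqref{eq:case1.8} as
\[ x^2 = s^4 - s^2 - t^2 \qquad\text{and}\qquad y^2 = t^4 - t^2 + s^2 - 1. \]

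I would begin with $s=0$: the left equation becomes $x^2+t^2=0$, forcing $x=t=0$, and then the right equation reads $y^2=-1$, which is impossible; hence $s\neq0$. For $t=0$ the left equation becomes $x^2=s^2(s^2-1)$, so (since $s\neq0$) the quantity $s^2-1$ must be a perfect square; writing $s^2-1=z^2$ and factoring $(s-z)(s+z)=1$ gives $s=\pm1$ and $x=0$, after which the right equation yields $y=0$. This produces exactly the claimed solutions $(x,y,s,t)=(0,0,\pm1,0)$, and it remains only to rule out solutions with both $s$ and $t$ nonzero.

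So suppose $s,t\neq0$. If $t^2>s^2\geq1$, then $t^2\geq2$, so $t^2+s^2-2>0$ gives $(t^2-1)^2<y^2$, while $t^2-s^2+1>0$ gives $y^2<(t^2)^2$; thus $y^2$ lies strictly between consecutive squares, contradicting the right equation. Symmetrically, if $s^2>t^2\geq1$, then $s^2-t^2=1$ is impossible (it would force $t=0$ via $(s-t)(s+t)=1$), so $s^2-t^2-1>0$, which yields $(s^2-1)^2<x^2<(s^2)^2$, contradicting the left equation. Hence $s^2=t^2$, in which case the right equation collapses to $y^2=t^4-1$; factoring $(t^2-y)(t^2+y)=1$ forces $y=0$ and $t^2=1$, and then the left equation reads $x^2=1-1-1=-1$, which is impossible.

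I do not anticipate any genuine obstacle: the argument is entirely elementary, with the only care needed being the bookkeeping in the two squeeze inequalities. In particular, although Lemma \ref{lem:case1.4} invoked an elliptic-curve computation for $y^2=t^4-1$ to count all rational points, that input is unnecessary here, since over $\mathbb{Z}$ the factorization $(t^2-y)(t^2+y)=1$ settles the diagonal case immediately.
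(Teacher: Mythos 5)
Your proof is correct and follows essentially the same route as the paper's: the same case split ($s=0$, $t=0$, the two squeeze arguments trapping $x^2$ resp.\ $y^2$ strictly between consecutive squares to force $s^2=t^2$, then the diagonal case). The only cosmetic differences are in the degenerate and diagonal subcases, where you exploit the opposite equation from the paper (e.g.\ factoring $t^4-y^2=1$ over $\mathbb{Z}$ instead of reducing $x^2=s^2(s^2-2)$ modulo $4$); both finishes are valid.
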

\begin{proof} If $t=0$, then $y^2-s^2=-1$. Then either $y-s=-1$ and $y+s=1$ or $y-s=1$ and $y+s=-1$. However, in either case $y=0$ and $s=\pm{1}$. Hence, we obtain the solutions $(x,y,s,t)=(0,0,\pm{1},0)$ listed above. On the other hand, if $s=0$ then $x^2=-t^2$ and so $x=0=t$. But this implies $y^2=-1$, a contradiction. Therefore, we assume that $s$ and $t$ are both non-zero. Assume for a contradiction that $s^2>t^2\geq1$. Note that if $s^2-t^2=1$, then (as in several earlier proofs) $t=0$. Hence, $s^2-t^2>1$ and thus $-s^2+1<-t^2$ and
\begin{equation*}
\begin{split} 
(s^2-1)^2&=s^4-s^2+(-s^2+1)\\[5pt] 
&<s^4-s^2-t^2\\[5pt] 
&=s^4-(s^2+t^2)\\[5pt] 
&<s^4=(s^2)^2.
\end{split} 
\end{equation*}
Hence, $s^4-s^2-t^2$ cannot be an integral square, contradicting the left side of \eqref{eq:case1.8}. Similarly, if $t^2>s^2\geq1$ then $2<t^2+s^2$ and \vspace{.1cm}  
\begin{equation*}
\begin{split}
(t^2-1)^2&=t^4-t^2+s^2+1-(t^2+s^2) \\[5pt] 
&<t^4-t^2+s^2+1-2\\[5pt]
&=t^4-t^2+s^2-1\\[5pt]
&<t^4-(t^2-s^2)\\[5pt]
&< t^4=(t^2)^2. 
\end{split} 
\end{equation*} 
Hence, $t^4-t^2+s^2-1$ cannot be an integral square, contradicting the right side of \eqref{eq:case1.8}. Therefore, we may assume that $s^2=t^2$. But then $x^2=s^2(s^2-2)$ and so $s^2-2=z^2$ for some $z\in\mathbb{Z}$ (since $s\neq0$). However, this equation has no solutions mod $4$. Thus $(x,y,s,t)=(0,0,\pm{1},0)$ are the only integral solutions as claimed.  
\end{proof} 

\begin{lemma}\label{lem:case1.9} Let $x,y,s,t\in\mathbb{Z}$ be such that  
\begin{equation}\label{eq:case1.9}
x^2+s^2-s^4=1-t^2\qquad\text{and}\qquad y^2+t^2-t^4=s^2.
\end{equation} 
Then $(x,y,s,t)=(\pm{1},\pm{1},\pm{1},0)$, $(0,0,0,\pm{1})$ or $(\pm{(u^2-1)},\pm{u^2},\pm{u},\pm{u})$ for some $u\in\mathbb{Z}$.
\end{lemma}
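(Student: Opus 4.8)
The plan is to mirror the case analysis used in the preceding lemmas. Rewriting the two equations as
\[x^2 = s^4 - s^2 - t^2 + 1 \qquad\text{and}\qquad y^2 = t^4 - t^2 + s^2,\]
I would first dispose of the degenerate cases in which one of $s,t$ vanishes, and then run a squeeze-between-consecutive-squares argument to force $s^2=t^2$ in all remaining cases.

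Setting $s=0$, the left equation becomes $x^2+t^2=1$, forcing $(x,t)\in\{(\pm1,0),(0,\pm1)\}$; feeding each into the right equation $y^2=t^4-t^2$ pins down $y=0$ and yields exactly the listed points $(\pm1,0,0,0)$ (the $u=0$ member of the third family) and $(0,0,0,\pm1)$. Symmetrically, setting $t=0$ turns the left equation into $x^2=s^4-s^2+1$, which is precisely the genus-one quartic already analyzed in Lemma \ref{lem:case1.3} via its elliptic model $Y^2=X^3+2X^2-3X$; reusing that computation gives $(s,x)\in\{(\pm1,\pm1),(0,\pm1)\}$, and then $y^2=s^2$ produces $(\pm1,\pm1,\pm1,0)$ together with the $u=0$ point again.

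With both $s$ and $t$ nonzero, I would reduce (by flipping signs, since the system is even in each of the four variables) to $s,t\geq1$ and apply the squeeze. If $t^2>s^2$, then from the right equation the identity $(t^2-1)^2 = (t^4-t^2+s^2)-(t^2+s^2-1)$ gives $y^2>(t^2-1)^2$, while $t^2>s^2$ gives $y^2 = t^4-t^2+s^2 < t^4=(t^2)^2$; thus $y^2$ lies strictly between consecutive squares, which is impossible. If instead $s^2>t^2$, the analogous computation with the left equation, using $(s^2-1)^2 = (s^4-s^2-t^2+1)-(s^2-t^2)$, sandwiches $x^2$ strictly between $(s^2-1)^2$ and $(s^2)^2$, again a contradiction. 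Hence $s^2=t^2$, and substituting back gives $x^2=(s^2-1)^2$ and $y^2=s^4$, i.e.\ $x=\pm(s^2-1)$ and $y=\pm s^2$, which is exactly the family $(\pm(u^2-1),\pm u^2,\pm u,\pm u)$.

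I expect no serious obstacle here: the only non-elementary ingredient is the rational-point count on $x^2=s^4-s^2+1$, and that is already available from Lemma \ref{lem:case1.3}. The one point requiring care is bookkeeping the independent sign choices, so that the three stated solution families genuinely exhaust all integral solutions and overlap only at $u=0$.
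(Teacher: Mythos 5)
Your proof is correct, but it takes a different (and longer) route than the paper. The paper's entire proof of this lemma is the observation that the system \eqref{eq:case1.9} is obtained from the system of Lemma \ref{lem:case1.3} by the substitution $(x,y,s,t)\rightarrow(y,x,t,s)$, so the solution set is just the image of that lemma's solution set under the same swap; nothing further is checked. What you have written is, in effect, the proof of Lemma \ref{lem:case1.3} transported through that swap and carried out from scratch: the degenerate cases $s=0$ and $t=0$, the two squeeze identities $(t^2-1)^2=(t^4-t^2+s^2)-(t^2+s^2-1)$ and $(s^2-1)^2=(s^4-s^2-t^2+1)-(s^2-t^2)$ forcing $s^2=t^2$, and the reuse of the rational-point count on $x^2=s^4-s^2+1$ via $E\colon Y^2=X^3+2X^2-3X$. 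All of these steps check out, the inequalities are strict where they need to be (using $s^2+t^2>1$ and $s^2\neq t^2$ in the respective subcases), and the three solution families you extract match the statement, including the $u=0$ overlaps. The trade-off is simply economy: the paper's symmetry reduction is one sentence and guarantees consistency with Lemma \ref{lem:case1.3} by construction, whereas your direct argument is self-contained but duplicates work already done there. If you keep the direct version, it would still be worth remarking on the symmetry, since it explains why the answer is the sign-and-variable swap of Lemma \ref{lem:case1.3}'s.
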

\begin{proof} Substitute $(x,y,s,t)\rightarrow (y,x,t,s)$ and apply Lemma \ref{lem:case1.3}.   
\end{proof} 

\begin{lemma}\label{lem:case1.10} Let $x,y,s,t\in\mathbb{Z}$ be such that  
\begin{equation}\label{eq:case1.10}
x^2+s^2-s^4=1-t^2\qquad\text{and}\qquad y^2+t^2-t^4=-s^2.
\end{equation} 
Then $(x,y,s,t)=(\pm{1},0,0,0)$ or $(0,0,0,\pm{1})$. 
\end{lemma}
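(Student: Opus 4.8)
The plan is to handle Lemma \ref{lem:case1.10} by the same elementary squeezing strategy used throughout this subsection, treating the degenerate cases $s=0$ and $t=0$ first and then using a ``strictly-between-consecutive-squares'' argument to force $s^2=t^2$, followed by a final modular obstruction. First I would dispose of the boundary cases. If $s=0$, the left equation of \eqref{eq:case1.10} becomes $x^2=1-t^2$, so $x^2+t^2=1$, forcing $(x,t)=(\pm 1,0)$ or $(0,\pm 1)$; feeding these back through the right equation $y^2+t^2-t^4=-s^2=0$ determines $y$ and yields exactly the listed points $(\pm 1,0,0,0)$ and $(0,0,0,\pm 1)$. If instead $t=0$, the right equation gives $y^2=-s^2$, so $y=s=0$, and then the left equation reads $x^2=1$, recovering $(\pm 1,0,0,0)$ again. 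Thus I may assume $s,t$ are both nonzero for the remainder.

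Next, with $s,t\neq 0$, I would run the two-sided inequality comparison. Suppose $t^2>s^2\ge 1$. The right-hand expression is $y^2=t^4-t^2-s^2$, and I would sandwich it: since $t^2+s^2>1$ (indeed $>2$), one checks that $(t^2-1)^2=t^4-t^2-(t^2-1)<t^4-t^2-s^2<t^4=(t^2)^2$, so $t^4-t^2-s^2$ lies strictly between consecutive squares and cannot itself be a square, contradicting the right equation; here I would need the preliminary observation (as in Lemmas \ref{lem:case1.2} and \ref{lem:case1.6}) that $t^2-s^2=1$ is impossible for $s\neq 0$, so that $t^2-s^2>1$ and the upper bound is strict. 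Symmetrically, if $s^2>t^2\ge 1$, I would apply the analogous squeeze to the left equation $x^2=1-t^2+s^4-s^2$, rewriting it as $s^4-s^2-(t^2-1)$ and bounding it strictly between $(s^2-1)^2$ and $(s^2)^2$. Either way these force a contradiction unless $s^2=t^2$.

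Finally, in the remaining case $s^2=t^2$ with $s\neq 0$, I would substitute into the right equation to get $y^2=t^4-t^2-s^2=t^4-2t^2=t^2(t^2-2)$, whence $t^2-2=z^2$ for some $z\in\mathbb{Z}$; this equation has no solutions modulo $4$, exactly as in Lemmas \ref{lem:case1.2}, \ref{lem:case1.6}, \ref{lem:case1.7}, and \ref{lem:case1.8}. Hence $s^2=t^2$ produces no further solutions, and the only integral points are those already recorded. I do not expect any genuine obstacle here: this lemma avoids the elliptic-curve computations needed elsewhere and reduces entirely to the Runge-type inequality squeeze plus a mod-$4$ check. The one point requiring care is keeping the sign bookkeeping in the left equation straight (the $1-t^2$ term shifts which quantity gets squeezed), and confirming that the $t^2-s^2=1$ degeneracy is properly excluded before claiming strict inequalities.
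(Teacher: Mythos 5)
Your proof is correct and uses exactly the squeeze-between-consecutive-squares plus mod-$4$ technique that the paper employs for this family of systems; the paper itself simply disposes of this lemma by substituting $(x,y,s,t)\rightarrow(y,x,t,s)$ and citing Lemma \ref{lem:case1.7}, whose proof is precisely your argument with the variables relabeled. The only cosmetic slip is that the exclusion of $t^2-s^2=1$ is needed for the \emph{lower} bound $(t^2-1)^2<t^4-t^2-s^2$ rather than the upper one, but you correctly identify that this degeneracy must be ruled out.
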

\begin{proof} Substitute $(x,y,s,t)\rightarrow (y,x,t,s)$ and apply Lemma \ref{lem:case1.7}.   
\end{proof} 

\begin{lemma}\label{lem:case1.11} Let $x,y,s,t\in\mathbb{Z}$ be such that  
\begin{equation}\label{eq:case1.11}
x^2+s^2-s^4=1-t^2\qquad\text{and}\qquad y^2+t^2-t^4=1-s^2.
\end{equation} 
Then $(x,y,s,t)=(0,\pm{1},0,\pm{1})$, $(\pm{1},0,\pm{1},0)$ or $(\pm{(u^2-1)}, \pm{(u^2-1)}, \pm{u},\pm{u})$ for $u\in\mathbb{Z}$. 
\end{lemma}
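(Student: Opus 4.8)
The plan is to follow the elementary squeeze strategy used in Lemmas \ref{lem:case1.1}--\ref{lem:case1.4}, exploiting the fact that the system \eqref{eq:case1.11} is symmetric under the involution $(x,y,s,t)\mapsto(y,x,t,s)$. First I would dispose of the boundary cases. Setting $s=0$ in \eqref{eq:case1.11} collapses the first equation to $x^2+t^2=1$, whose only integer solutions are $(x,t)=(\pm1,0)$ and $(0,\pm1)$; substituting each back into the second equation forces $y=\pm1$ and recovers the listed solutions $(0,\pm1,0,\pm1)$ together with $(\pm1,\pm1,0,0)$ (the latter being the $u=0$ member of the diagonal family). Applying the symmetry then handles $t=0$ and produces $(\pm1,0,\pm1,0)$. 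Thus I may assume $s$ and $t$ are both nonzero, so $s^2,t^2\geq1$.

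The heart of the argument is to show that nonzero $s,t$ force $s^2=t^2$. By the symmetry it suffices to rule out $t^2>s^2\geq1$. In that regime I would estimate the right-hand side of the second equation against two consecutive squares: since $t^2+s^2>1$ and $t^2>s^2$, one has
\[
(t^2-1)^2=t^4-2t^2+1<t^4-t^2+1-s^2<t^4=(t^2)^2,
\]
so $y^2=t^4-t^2+1-s^2$ lies strictly between consecutive integer squares and cannot be a perfect square, contradicting \eqref{eq:case1.11}. The symmetric estimate (squeezing $x^2$ between $(s^2-1)^2$ and $s^4$) eliminates $s^2>t^2\geq1$, leaving only $s^2=t^2$.

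Finally, writing $u=\lvert s\rvert=\lvert t\rvert$ and substituting $s^2=t^2=u^2$ into either equation gives $x^2=u^4-2u^2+1=(u^2-1)^2$ and likewise $y^2=(u^2-1)^2$, whence $x=\pm(u^2-1)$ and $y=\pm(u^2-1)$. This yields exactly the diagonal family $(\pm(u^2-1),\pm(u^2-1),\pm u,\pm u)$ and completes the classification.

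I do not expect a serious obstacle here: unlike the neighboring cases (e.g.\ Lemma \ref{lem:case1.3}, where the $s=0$ slice required an elliptic-curve computation), every boundary equation that arises is a circle or a trivial factorization, and the diagonal case reduces to a clean perfect-square identity. The only point demanding a little care is verifying the strict inequalities in the squeeze, in particular that $t^2+s^2>1$, which is precisely where the assumption that $s$ and $t$ are both nonzero is used.
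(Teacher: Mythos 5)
Your proof is correct and follows essentially the same route as the paper's: dispose of the $s=0$ and $t=0$ slices via $x^2+t^2=1$, use the $(x,y,s,t)\mapsto(y,x,t,s)$ symmetry to reduce to one inequality case, squeeze the relevant quartic between $(t^2-1)^2$ and $(t^2)^2$ to force $s^2=t^2$, and read off the diagonal family from the perfect square $(u^2-1)^2$. The only cosmetic difference is that you squeeze the second equation under $t^2>s^2$ while the paper squeezes the first under $s^2>t^2$; these are mirror images under the symmetry, and all your strict inequalities check out.
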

\begin{proof} Suppose that $s=0$, then $x^2+t^2=1$ and $(x,t)\in\{(0,\pm{1}),(\pm{1},0)\}$. In the first case, $(x,y,s,t)=(0,\pm{1},0,\pm{1})$ listed above. In the second case, $(x,y,s,t)=(\pm{1},\pm{1},0,0)$ corresponding to $u=0$ above. This problem is symmetric in $s$ and $t$, and so repeating this process with $t=0$ we obtain the new point $(x,y,s,t)=(\pm{1},0,\pm{1},0)$ listed above. Therefore, we assume that $s$ and $t$ both are non-zero. Suppose for a contradiction that $s^2>t^2\geq1$. Then $s^2+t^2>2$ and
\begin{equation*}
\begin{split} 
(s^2-1)^2&=s^4-s^2-s^2+1\\[5pt] 
&<s^4-s^2-t^2+1\\[5pt] 
&=s^4-(s^2+t^2-1)\\[5pt] 
&<s^4=(s^2)^2.
\end{split} 
\end{equation*}
Hence, $s^4-s^2-t^2+1$ cannot be an integral square, contradicting the left side of \eqref{eq:case1.11}. But again, this problem is symmetric in $s$ and $t$, so we may deduce that $s^2=t^2$, from which the points $(x,y,s,t)=(\pm{(u^2-1)}, \pm{(u^2-1)}, \pm{u},\pm{u})$ for $u\in\mathbb{Z}$ my be easily obtained.    
\end{proof} 

\begin{lemma}\label{lem:case1.12} Let $x,y,s,t\in\mathbb{Z}$ be such that  
\begin{equation}\label{eq:case1.12}
x^2+s^2-s^4=1-t^2\qquad\text{and}\qquad y^2+t^2-t^4=-(1-s^2).
\end{equation} 
Then $(x,y,s,t)=(0,0,\pm{1},\pm{1})$ or $(\pm{1},0,\pm{1},0)$. 
\end{lemma}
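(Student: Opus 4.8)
The plan is to follow the elementary strategy used in Lemmas \ref{lem:case1.1}--\ref{lem:case1.4}: reduce to the case where both $s$ and $t$ are nonzero, force $s^2=t^2$ by squeezing one side of \eqref{eq:case1.12} strictly between two consecutive integer squares, and then solve the resulting one-variable equation. It is convenient to first rewrite \eqref{eq:case1.12} as
\[
x^2 = s^4 - s^2 - t^2 + 1 \qquad\text{and}\qquad y^2 = t^4 - t^2 + s^2 - 1.
\]
Note that the only nontrivial symmetry, $(x,y,s,t)\to(y,x,t,s)$, sends this system to the one with right-hand sides $-(1-t^2)$ and $1-s^2$, which is not among the lemmas already proved in this subsection; hence a direct argument is required rather than a reduction.

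First I would dispose of the degenerate cases. If $s=0$, the first equation forces $t^2\le 1$, and substituting $t\in\{0,\pm1\}$ into the second equation yields $y^2=-1$, so there are no solutions with $s=0$. If $t=0$, the second equation becomes $y^2=s^2-1$, i.e.\ $(s-y)(s+y)=1$, which forces $s=\pm1$ and $y=0$; the first equation then gives $x^2=1$, producing exactly the points $(\pm1,0,\pm1,0)$.

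Next, assuming $s,t\neq 0$, I would run a two-sided squeeze to conclude $s^2=t^2$. If $s^2>t^2\ge 1$ (so $s^2\ge 2$), the first equation places $x^2=s^4-s^2-t^2+1$ strictly between $(s^2-1)^2$ and $(s^2)^2$: the lower gap is $x^2-(s^2-1)^2=s^2-t^2>0$ and the upper gap is $(s^2)^2-x^2=s^2+t^2-1\ge 2>0$, a contradiction. Symmetrically, if $t^2>s^2\ge 1$ (so $t^2\ge 2$), the second equation places $y^2=t^4-t^2+s^2-1$ strictly between $(t^2-1)^2$ and $(t^2)^2$, with gaps $t^2+s^2-2\ge 1$ and $t^2-s^2+1>0$. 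A pleasant feature here, in contrast to Lemmas \ref{lem:case1.2} and \ref{lem:case1.7}, is that neither squeeze degenerates when $|s^2-t^2|=1$, so no separate boundary case is needed. Finally, with $s^2=t^2$ and $s\neq 0$, the first equation collapses to $x^2=(s^2-1)^2$ (so $x=\pm(s^2-1)$), while the second becomes $y^2=s^4-1$; then $(s^2-y)(s^2+y)=1$ forces $y=0$ and $s^2=1$, whence $s,t\in\{\pm1\}$ and $x=0$, giving the points $(0,0,\pm1,\pm1)$. Collecting the three cases yields exactly the claimed solution set.

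The main obstacle is purely bookkeeping rather than conceptual: in each inequality regime one must select the equation whose leading term matches the dominant variable so that the squeeze is valid, and then verify that \emph{both} gaps are strictly positive under the stated hypotheses. Once $s^2=t^2$ is established, the leftover equation $y^2=s^4-1$ is a trivial difference of squares, so this lemma requires neither an elliptic-curve computation nor Runge's method.
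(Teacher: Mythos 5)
Your proof is correct and follows essentially the same strategy as the paper's: dispose of the $s=0$ and $t=0$ cases directly, then squeeze $s^4-s^2-t^2+1$ (resp.\ $t^4-t^2+s^2-1$) strictly between $(s^2-1)^2$ and $(s^2)^2$ (resp.\ $(t^2-1)^2$ and $(t^2)^2$) to force $s^2=t^2$. The only difference is at the end: the paper handles $y^2=t^4-1$ by citing the elliptic-curve computation from Lemma \ref{lem:case1.4}, whereas you resolve it by the elementary factorization $(s^2-y)(s^2+y)=1$, which is valid here since only integral solutions are needed and is a slight simplification.
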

\begin{proof} If $t=0$, then $y^2-s^2=-1$ and so $y=0$ and $s=\pm{1}$ (by the same argument used at various points above). Hence, we obtain the solutions $(x,y,s,t)=(\pm{1},0,\pm{1},0)$ listed above. On the other hand, if $s=0$, then $x^2+t^2=1$ and $(x,t)\in\in\{(0,\pm{1}),(0,\pm{1})\}$. However, in all of these scenarios $t^2-t^4=0$, so that $y^2=-1$, a contradiction. Therefore, there are no solutions in this case. From here we may assume that $s$ and $t$ are both non-zero. Now assume for a contradiction that $s^2>t^2\geq1$. Then $s^2+t^2>2$ and \vspace{.1cm} 
\begin{equation*}
\begin{split} 
(s^2-1)^2&=s^4-s^2-s^2+1\\[5pt] 
&<s^4-s^2-t^2+1\\[5pt] 
&=s^4-(s^2+t^2-1)\\[5pt] 
&<s^4=(s^2)^2.
\end{split} 
\end{equation*}
Hence, $s^4-s^2-t^2+1$ cannot be an integral square, contradicting the left side of \eqref{eq:case1.12}. Likewise, if $t^2>s^2\geq1$, then $2<t^2+s^2$ and \vspace{.1cm}  
\begin{equation*}
\begin{split}
(t^2-1)^2&=t^4-t^2+s^2+1-(t^2+s^2) \\[5pt] 
&<t^4-t^2+s^2+1-2\\[5pt]
&=t^4-t^2+s^2-1\\[5pt]
&<t^4-(t^2-s^2)\\[5pt]
&< t^4=(t^2)^2. 
\end{split} 
\end{equation*} 
Hence, $t^4-t^2+s^2-1$ cannot be an integral square, contradicting the right side of \eqref{eq:case1.12}. Therefore, we may assume that $s^2=t^2$. But, in this case the right side of \eqref{eq:case1.12} becomes $y^2=t^4-1$. Moreover, we have already determined the integral (in fact rational points) on this hyperelliptic curve in Lemma \ref{lem:case1.4} above: $(y,t)\in\{(0,\pm{1}\}$. Therefore, we obtain the only other solutions $(x,y,s,t)=(0,0,\pm{1},\pm{1})$ listed above.     
 \end{proof}  

\begin{lemma}\label{lem:case1.13} Let $x,y,s,t\in\mathbb{Z}$ be such that  
\begin{equation}\label{eq:case1.13}
x^2+s^2-s^4=-(1-t^2)\qquad\text{and}\qquad y^2+t^2-t^4=s^2.
\end{equation} 
Then $(x,y,s,t)=(0,\pm{1},\pm{1},\pm{1})$ or $(0,0,0,\pm{1})$ 
\end{lemma}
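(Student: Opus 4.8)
The plan is to exploit the near-symmetry of the system \eqref{eq:case1.13} with a system already solved, rather than re-running a squeezing argument from scratch; this mirrors the one-line reductions used in Lemmas \ref{lem:case1.5}, \ref{lem:case1.9}, and \ref{lem:case1.10}. The observation is that applying the involution $(x,y,s,t)\mapsto(y,x,t,s)$ to the pair
\[x^2+s^2-s^4=-(1-t^2)\qquad\text{and}\qquad y^2+t^2-t^4=s^2\]
produces exactly the two equations of Lemma \ref{lem:case1.4}: the second equation above becomes its first equation $x^2+s^2-s^4=t^2$ (after renaming), and the first equation above becomes its second equation $y^2+t^2-t^4=-(1-s^2)$. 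Since this map is an involution on $\mathbb{Z}^4$, it gives a bijection between the solution set of \eqref{eq:case1.13} and that of \eqref{eq:case1.4}. I would then simply invoke Lemma \ref{lem:case1.4}: its solutions $(\pm1,0,\pm1,\pm1)$ and $(0,0,\pm1,0)$ pull back under the involution to $(0,\pm1,\pm1,\pm1)$ and $(0,0,0,\pm1)$ respectively, which are precisely the claimed solutions.

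If one prefers a self-contained argument, I would instead argue directly along the lines of the earlier lemmas. First dispose of the degenerate cases: when $s=0$ the first equation forces $t^2-1$ to be a square, whence $t=\pm1$, $x=0$, and then $y=0$, giving $(0,0,0,\pm1)$; when $t=0$ the first equation reads $x^2=s^4-s^2-1$, which is negative for $|s|\leq1$ and trapped strictly between $(s^2-1)^2$ and $(s^2)^2$ for $|s|\geq2$, so no solution arises. With $s,t$ both nonzero I would run the usual squeeze: if $s^2>t^2\geq1$ the first equation places $x^2=s^4-s^2+t^2-1$ strictly between $(s^2-1)^2$ and $(s^2)^2$, while if $t^2>s^2\geq1$ the second equation places $y^2=t^4-t^2+s^2$ strictly between $(t^2-1)^2$ and $(t^2)^2$; both are contradictions. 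Hence $s^2=t^2$, and the equations collapse to $x^2=s^4-1$ and $y^2=s^4$. The first is the hyperelliptic equation $x^2=t^4-1$ whose only integral solutions, already computed in Lemma \ref{lem:case1.4} via the rank-zero elliptic curve $Y^2=X^3+4X$, have $s=\pm1$ and $x=0$; this forces $y=\pm1$ and recovers $(0,\pm1,\pm1,\pm1)$.

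The only genuine subtlety, and the step I would check most carefully, is getting the substitution exactly right: that the asymmetric right-hand sides $-(1-t^2)$ and $s^2$ land on the correct sides of Lemma \ref{lem:case1.4}, and that the sign bookkeeping in transporting the solution families back through the involution is complete. The squeezing estimates are routine and identical in spirit to those of the preceding lemmas, and the appeal to the rank-zero elliptic curve is already established, so no new arithmetic input is required.
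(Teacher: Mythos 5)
Your primary argument is exactly the paper's proof: the paper disposes of this case with the single line ``Substitute $(x,y,s,t)\rightarrow(y,x,t,s)$ and apply Lemma \ref{lem:case1.4},'' and your verification of where the two right-hand sides land and how the solution families pull back is accurate. The self-contained squeezing argument you include as a backup is also correct but unnecessary given the reduction.
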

\begin{proof} Substitute $(x,y,s,t)\rightarrow (y,x,t,s)$ and apply Lemma \ref{lem:case1.4}. 
\end{proof} 

\begin{lemma}\label{lem:case1.14} Let $x,y,s,t\in\mathbb{Z}$ be such that  
\begin{equation}\label{eq:case1.14}
x^2+s^2-s^4=-(1-t^2)\qquad\text{and}\qquad y^2+t^2-t^4=-s^2.
\end{equation} 
Then $(x,y,s,t)=(0,0,0,\pm{1})$. 
\end{lemma}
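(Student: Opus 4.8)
The plan is to exploit the same symmetry that settles Lemmas \ref{lem:case1.5}, \ref{lem:case1.9}, \ref{lem:case1.10}, and \ref{lem:case1.13}, thereby reducing the present system to the already-proven Lemma \ref{lem:case1.8}. First I would apply the involution $\sigma\colon(x,y,s,t)\mapsto(y,x,t,s)$ of $\mathbb{Z}^4$, which simultaneously interchanges the two defining equations and swaps the roles of $s$ and $t$. Under $\sigma$ the left-hand equation $x^2+s^2-s^4=-(1-t^2)$ of \eqref{eq:case1.14} turns into $y^2+t^2-t^4=-(1-s^2)$, while the right-hand equation $y^2+t^2-t^4=-s^2$ turns into $x^2+s^2-s^4=-t^2$. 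Listing the equation in $(x,s)$ first, the image system reads
\[x^2+s^2-s^4=-t^2\qquad\text{and}\qquad y^2+t^2-t^4=-(1-s^2),\]
which is verbatim the system of Lemma \ref{lem:case1.8}.

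Because $\sigma$ is an involution, a tuple $(x,y,s,t)$ satisfies \eqref{eq:case1.14} if and only if $\sigma(x,y,s,t)$ satisfies \eqref{eq:case1.8}, so $\sigma$ induces a bijection between the two integral solution sets. By Lemma \ref{lem:case1.8} the solution set of \eqref{eq:case1.8} is exactly $\{(0,0,\pm1,0)\}$; applying $\sigma$ to these tuples (that is, reading off $(y,x,t,s)$) returns $(0,0,0,\pm1)$. This yields the claimed description of the solutions. The only point demanding attention is the substitution bookkeeping: one must confirm that the transformed equations coincide with \eqref{eq:case1.8} on the nose and that the solutions are transported back with the correct permutation of coordinates. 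There is no genuine number-theoretic obstacle here.

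Should a self-contained argument be preferred, I would instead run the template used for Lemma \ref{lem:case1.8}. One disposes of $s=0$ and $t=0$ directly (the former forces $t=\pm1$ and hence $(0,0,0,\pm1)$, while the latter leaves no solution since $y^2=-s^2$ then $x^2=-1$). For $s,t\neq0$ one writes $y^2=t^4-t^2-s^2$ and $x^2=s^4-s^2+t^2-1$ and traps each value strictly between consecutive squares: the case $t^2>s^2$ contradicts the right equation and the case $s^2>t^2$ contradicts the left, forcing $s^2=t^2$. Finally $s^2=t^2\neq0$ gives $y^2=t^2(t^2-2)$, and $t^2-2$ is never a square modulo $4$. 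I expect the symmetry reduction to be the cleaner route, with the direct argument serving only as a sanity check.
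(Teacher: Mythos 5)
Your proposal is correct and is exactly the paper's proof: the paper disposes of this case with the single line ``Substitute $(x,y,s,t)\rightarrow (y,x,t,s)$ and apply Lemma \ref{lem:case1.8}.'' Your bookkeeping of the involution and the transported solution set checks out, and the self-contained backup argument you sketch is the same template used to prove Lemma \ref{lem:case1.8} itself.
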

\begin{proof} Substitute $(x,y,s,t)\rightarrow (y,x,t,s)$ and apply Lemma \ref{lem:case1.8}. 
\end{proof} 

\begin{lemma}\label{lem:case1.15} Let $x,y,s,t\in\mathbb{Z}$ be such that  
\begin{equation}\label{eq:case1.15}
x^2+s^2-s^4=-(1-t^2)\qquad\text{and}\qquad y^2+t^2-t^4=1-s^2.
\end{equation} 
$(x,y,s,t)=(0,0,\pm{1},\pm{1})$ or $(0,\pm{1},0,\pm{1})$. 
\end{lemma}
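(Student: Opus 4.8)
Lemma 1.15 reads:
$$x^2+s^2-s^4=-(1-t^2)\qquad\text{and}\qquad y^2+t^2-t^4=1-s^2$$
with claimed solutions $(x,y,s,t)=(0,0,\pm1,\pm1)$ or $(0,\pm1,0,\pm1)$.

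Let me think about how to prove this, following the pattern of the earlier lemmas.

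The fastest route is to notice that the system \eqref{eq:case1.15} is not genuinely new: it is the image of the system \eqref{eq:case1.12} of Lemma \ref{lem:case1.12} under the involution $(x,y,s,t)\mapsto(y,x,t,s)$. Relabelling in this way swaps the two equations of \eqref{eq:case1.15} and sends $(s,t)\mapsto(t,s)$, turning them into $x^2+s^2-s^4=1-t^2$ and $y^2+t^2-t^4=-(1-s^2)$, which is exactly \eqref{eq:case1.12}. Thus I would simply record that the integral solutions of \eqref{eq:case1.15} are obtained by applying $(x,y,s,t)\mapsto(y,x,t,s)$ to those of \eqref{eq:case1.12}. Since Lemma \ref{lem:case1.12} gives $(0,0,\pm1,\pm1)$ and $(\pm1,0,\pm1,0)$, the involution sends these to $(0,0,\pm1,\pm1)$ and $(0,\pm1,0,\pm1)$, which is precisely the claimed list. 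This mirrors how Lemmas \ref{lem:case1.5}, \ref{lem:case1.9}, \ref{lem:case1.10}, \ref{lem:case1.13}, \ref{lem:case1.14} are dispatched, and it is the approach I would present.

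Should one want a self-contained argument in the template of the earlier lemmas, I would run the direct computation. First dispose of the degenerate cases: when $s=0$ the left equation reads $x^2=t^2-1$, so $(t-x)(t+x)=1$ forces $t=\pm1$, $x=0$, and then the right equation gives $y=\pm1$, yielding $(0,\pm1,0,\pm1)$; when $t=0$ the right equation forces $s^2\le1$ while the left becomes $x^2=s^4-s^2-1$, which is negative for every $s\in\{0,\pm1\}$, so there are no solutions.

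With $s,t\neq0$, the core is the familiar trapping between consecutive squares. If $t^2>s^2$, writing the right equation as $y^2=t^4-t^2+1-s^2$ and comparing with $(t^2-1)^2$ and $(t^2)^2$ gives $(t^2-1)^2<y^2<(t^2)^2$, impossible; if $s^2>t^2$, the same manoeuvre on the left equation $x^2=s^4-s^2+t^2-1$ yields $(s^2-1)^2<x^2<(s^2)^2$, again impossible. Hence $s^2=t^2$, which collapses the right equation to $y^2=(s^2-1)^2$ (imposing nothing) and the left equation to $x^2=s^4-1$. This quartic is exactly the one resolved in Lemma \ref{lem:case1.4}, whose only integral points are $(x,s)=(0,\pm1)$; therefore $s^2=t^2=1$ and $y=0$, recovering $(0,0,\pm1,\pm1)$.

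The only real obstacle lies in the direct route, at the final quartic $x^2=s^4-1$: unlike every other step it cannot be killed by a congruence or a squeeze, and one must either invoke the elliptic-curve computation already performed for Lemma \ref{lem:case1.4} (where $y^2=t^4-1$ was shown to have only $(y,t)=(0,\pm1)$) or observe the elementary factorization $(s^2-x)(s^2+x)=1$. Through the symmetry route there is essentially no obstacle: the entire content is the bookkeeping that \eqref{eq:case1.15} and \eqref{eq:case1.12} differ only by the relabelling $(x,y,s,t)\mapsto(y,x,t,s)$.
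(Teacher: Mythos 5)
Your primary route is exactly the paper's proof: the paper disposes of this lemma with the single line ``Substitute $(x,y,s,t)\rightarrow(y,x,t,s)$ and apply Lemma \ref{lem:case1.12},'' and your bookkeeping of how the involution transports the solution list of \eqref{eq:case1.12} to the claimed list for \eqref{eq:case1.15} is correct. The self-contained argument you sketch as a backup is also sound, but it is not needed.
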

\begin{proof} Substitute $(x,y,s,t)\rightarrow (y,x,t,s)$ and apply Lemma \ref{lem:case1.12}. 
\end{proof} 

\begin{lemma}\label{lem:case1.16} Let $x,y,s,t\in\mathbb{Z}$ be such that  
\begin{equation}\label{eq:case1.16}
x^2+s^2-s^4=-(1-t^2)\qquad\text{and}\qquad y^2+t^2-t^4=-(1-s^2).
\end{equation} 
$(x,y,s,t)=(0,0,\pm{1},\pm{1})$
\end{lemma}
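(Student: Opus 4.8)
The plan is to follow the template used throughout this subsection: dispose of the degenerate cases $s=0$ and $t=0$ first, then assume $s,t$ are both nonzero and run the ``trapped between consecutive squares'' argument to force $s^2=t^2$, at which point a direct factorization finishes the job. Rewriting the system as
\[
x^2 = s^4 - s^2 + t^2 - 1, \qquad y^2 = t^4 - t^2 + s^2 - 1,
\]
I first observe that the whole system is symmetric under the substitution $(x,y,s,t)\mapsto(y,x,t,s)$, so any argument applied to the pair $(x,s)$ transfers verbatim to $(y,t)$.

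First I would handle $s=0$. Then the left equation reads $x^2 = t^2 - 1$, so $t^2 - x^2 = 1$ forces $t = \pm 1$ and $x = 0$; substituting into the right equation gives $y^2 = t^4 - t^2 - 1 = -1$, a contradiction. By the symmetry above, $t=0$ is likewise impossible. Hence both $s$ and $t$ are nonzero, and after replacing $(s,t)$ with $(\pm s, \pm t)$ if necessary I may take $s,t \geq 1$.

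Next I would rule out $s^2 \neq t^2$. Suppose $s^2 > t^2 \geq 1$; since $s^2$ is a perfect square strictly exceeding $t^2\geq 1$, we have $s^2 \geq 4$, whence $s^2 + t^2 > 2$ and $t^2 \leq s^2$. These two inequalities yield
\[
(s^2-1)^2 = s^4 - 2s^2 + 1 < s^4 - s^2 + t^2 - 1 < s^4 = (s^2)^2,
\]
so $x^2$ lies strictly between consecutive integer squares, contradicting the left equation. The case $t^2 > s^2 \geq 1$ is excluded identically using the right equation together with the symmetry, so $s^2 = t^2$.

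Finally, setting $s^2 = t^2$ in the left equation collapses it to $x^2 = s^4 - 1$, i.e.\ $(s^2 - x)(s^2 + x) = 1$; the only integer factorizations of $1$ force $x = 0$ and $s^2 = 1$, so $s = \pm 1$, and then $t = \pm 1$ and $y = 0$ follow from the right equation. This yields exactly $(x,y,s,t) = (0,0,\pm 1, \pm 1)$. I expect no genuine obstacle: every step is an instance of a technique already deployed in the preceding lemmas, and the only point requiring a moment's care is confirming the strict inequality $s^2 + t^2 > 2$ in the squeeze, which is automatic once $s^2 > t^2 \geq 1$ forces $s^2 \geq 4$.
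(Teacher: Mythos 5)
Your proof is correct, and its core --- forcing $s^2=t^2$ by trapping $x^2=s^4-s^2+t^2-1$ (resp.\ $y^2=t^4-t^2+s^2-1$) strictly between $(s^2-1)^2$ and $(s^2)^2$ --- is exactly the paper's argument. You diverge in two sub-steps, both in the direction of being more elementary. For $s=0$ the paper reduces the right-hand equation to $y^2=t^4-t^2-1$ and kills it modulo $4$, whereas you use the left-hand equation to get $t^2-x^2=1$ and then land on $y^2=-1$; both work. More notably, once $s^2=t^2$ the paper reads off $y^2=t^4-1$ and invokes the rational-point computation on that hyperelliptic quartic from Lemma \ref{lem:case1.4} (a Magma computation of $E(\mathbb{Q})$ for $E: Y^2=X^3+4X$), while you simply factor $(s^2-x)(s^2+x)=1$ over $\mathbb{Z}$ to force $x=0$, $s^2=1$. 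Since only integral solutions are needed, your factorization fully replaces the elliptic-curve machinery at this step; the paper's route determines all rational points but is overkill here. One tiny remark: your observation that $s^2\geq 4$ is unnecessary --- $s^2>t^2\geq 1$ already gives $s^2+t^2\geq 3>2$ --- but it is harmless.
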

\begin{proof} If $s=0$, then $y^2=t^4-t^2-1$. However, this equation has no solutions mod $4$. Therefore, $s\neq0$. On the other hand, this system of equations is symmetric in $s$ and $t$ and so $t\neq0$ also. Now assume for a contradiction that $t^2>s^2\geq1$. Then $2<t^2+s^2$ and \vspace{.1cm}  
\begin{equation*}
\begin{split}
(t^2-1)^2&=t^4-t^2+s^2+1-(t^2+s^2) \\[5pt] 
&<t^4-t^2+s^2+1-2\\[5pt]
&=t^4-t^2+s^2-1\\[5pt]
&<t^4-(t^2-s^2)\\[5pt]
&< t^4=(t^2)^2. 
\end{split} 
\end{equation*} 
Hence, $t^4-t^2+s^2-1$ cannot be an integral square, contradicting the right side of \eqref{eq:case1.16}. But again, this system of equations is symmetric in $s$ and $t$, so $s^2>t^2$ is also impossible. Therefore, $s^2=t^2$ and $y^2=t^4-1$. However, we have already determined the integral (in fact rational points) on this hyperelliptic curve in Lemma \ref{lem:case1.4} above: $(y,t)\in\{(0,\pm{1}\}$. Therefore, we obtain the only integral solutions $(x,y,s,t)=(0,0,\pm{1},\pm{1})$ listed above.    
\end{proof} 

\subsection{One square fixed point and one square $2$-cycle} 
In this subsection, we assume that $\phi_1$ has a square fixed point and $\phi_2$ has a square in a $2$-cycle. Hence, $c_1=s^2-s^4$ and $c_2=-1-t^2-t^4$ for some $s,t\in\mathbb{Z}$. Moreover, we assume that both $\phi_1(\mathbb{Z})\cap\PrePer(\phi_2,\mathbb{Q})$ and $\phi_2(\mathbb{Z})\cap\PrePer(\phi_1,\mathbb{Q})$ are non-empty sets. Therefore, Proposition \ref{prop:portraits} implies that 
\[x^2+s^2-s^4=\pm{t^2},\; \pm{(t^2+1)} \;\;\;\text{and}\;\;\; y^2-1-t^2-t^4=\pm{s^2},\; \pm{(s^2-1)}\]
for some $x,y\in\mathbb{Z}$. From here, we handle each of these $16$ cases (corresponding to a pair of preperiodic points from $\phi_1$ and $\phi_2$) in subsequent lemmas.

\begin{lemma}\label{lem:case2.1} Let $x,y,s,t\in\mathbb{Z}$ be such that  
\begin{equation}\label{eq:case2.1}
x^2+s^2-s^4=t^2\qquad\text{and}\qquad y^2-1-t^2-t^4=s^2.
\end{equation} 
Then $(x,y,s,t)=(\pm{u}^2,\pm{(u^2+1)},\pm{u},\pm{u})$ for some $u\in\mathbb{Z}$. 
\end{lemma}
\begin{proof} If $s=0$, then the right side of \eqref{eq:case2.1} is the curve, $C: y^2=t^4+t^2+1$. We compute with Magma (using the point $(0,1)$ on $C$) that $C$ is birational over $\mathbb{Q}$ to $E: Y^2=X^3 - 2X^2 - 3X$, an elliptic curve in Weierstrass form. In particular, we compute with Magma that the Mordell-Weil group of $E$ is $E(\mathbb{Q})\cong\mathbb{Z}/2\mathbb{Z}\times \mathbb{Z}/2\mathbb{Z}$. However, the projective model of $C$ has $4$ known rational points: two at infinity and $(0,\pm{1})$. Therefore, we deduce that $(x,y,s,t)=(0,\pm{1},0,0)$, which fits our description above when $u=0$. On the other hand, when $t=0$ we see that $x^2=s^2(s^2-2)$. In particular, if $s\neq0$, then $s^2-2=z^2$ for some $z\in\mathbb{Z}$. However, this equation has no solutions modulo $4$. Therefore, we may assume that $s$ and $t$ are non-zero. Suppose for a contradiction that $s^2>t^2\geq1$. Then  $1-s^2<t^2$ and  
\begin{equation*}
\begin{split}  
(s^2-1)^2&=s^4-s^2-s^2+1\\[5pt]
&<s^4-s^2+t^2=(s^2)^2-(s^2-t^2)\\[5pt] 
&<(s^2)^2.
\end{split} 
\end{equation*} 
Therefore, the quantity $s^4-s^2+t^2$ is strictly between two consecutive integer squares and so cannot be a square itself. However, this contradicts the equation on the left of \eqref{eq:case2.1}. Conversely, suppose that $t^2>s^2\geq1$. Then
\begin{equation*}
\begin{split}  
(t^2+1)^2&=t^4+t^2+t^2+1\\[5pt]
&>t^4+t^2+1+s^2\\[5pt] 
&>t^4=(t^2)^2.
\end{split} 
\end{equation*} 
Therefore, $t^4+t^2+1+s^2$ is strictly between two consecutive integer squares and so cannot be a square itself. However, this contradicts the equation on the right of \eqref{eq:case2.1}. Therefore, we can assume that $s^2=t^2$ and the claim easily follows.          
\end{proof} 

\begin{lemma}\label{lem:case2.2} Let $x,y,s,t\in\mathbb{Z}$ be such that  
\begin{equation}\label{eq:case2.2}
x^2+s^2-s^4=t^2\qquad\text{and}\qquad y^2-1-t^2-t^4=-s^2.
\end{equation} 
Then $(x,y,s,t)\in\{(0,0,\pm{1},0),(0,\pm{1},0,0)\}$. 
\end{lemma}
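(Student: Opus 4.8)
The plan is to reproduce the casework used throughout the two-fixed-point lemmas: dispose of $s=0$ and $t=0$ first, then show there are no solutions with both nonzero by trapping a square strictly between consecutive squares. First I would treat $s=0$, where \eqref{eq:case2.2} reduces to $x^2=t^2$ together with $y^2=t^4+t^2+1$. The second equation is exactly the genus-one curve already analyzed in the $s=0$ branch of Lemma \ref{lem:case2.1}, whose only integral points are $(t,y)=(0,\pm1)$; hence $t=0$, then $x=0$, and we recover $(x,y,s,t)=(0,\pm1,0,0)$. Next I would treat $t=0$, where the right equation becomes $y^2+s^2=1$, forcing $(s,y)\in\{(0,\pm1),(\pm1,0)\}$. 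The subcase $s=0$ was just handled, while $s=\pm1,\ y=0$ substituted into the left equation gives $x^2=s^2(s^2-1)=0$, so $x=0$ and we obtain $(x,y,s,t)=(0,0,\pm1,0)$.

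It then remains to rule out solutions with $s,t$ both nonzero, where I would run the squeeze argument in two directions. If $s^2>t^2\ge1$, then from the left equation $x^2=s^4-s^2+t^2$, and one checks $(s^2-1)^2<s^4-s^2+t^2<(s^2)^2$: the lower bound uses $s^2+t^2-1>0$ and the upper bound uses $s^2-t^2>0$, so $x^2$ lies strictly between consecutive squares, a contradiction. If instead $t^2>s^2\ge1$, then from the right equation $y^2=t^4+t^2+1-s^2$, and one checks $(t^2)^2<t^4+t^2+1-s^2<(t^2+1)^2$: the lower bound uses $t^2+1-s^2>0$ and the upper bound uses $t^2+s^2>0$, again impossible. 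Therefore $s^2=t^2$.

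Finally, with $s^2=t^2$ the left equation collapses to $x^2=s^4$ (no constraint, $x=\pm s^2$), while the right equation becomes $y^2=s^4+1$; for $s\neq0$ this sits strictly between $(s^2)^2$ and $(s^2+1)^2$ since $y^2-(s^2)^2=1>0$ and $(s^2+1)^2-y^2=2s^2>0$, a contradiction. Hence no solution has $s,t$ both nonzero, and the complete list is $(0,\pm1,0,0)$ and $(0,0,\pm1,0)$, as claimed. The only nonelementary ingredient is the integral-point determination for $y^2=t^4+t^2+1$, but since this was already carried out in Lemma \ref{lem:case2.1} the present argument is entirely elementary once that is invoked; I expect no genuine obstacle beyond careful bookkeeping of the inequalities in the two squeeze directions.
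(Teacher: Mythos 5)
Your proof is correct and follows essentially the same strategy as the paper: dispose of the boundary cases, then trap $x^2$ or $y^2$ strictly between consecutive squares. The only cosmetic difference is that the paper's single case $s^2\le t^2$ (for $t\neq 0$) absorbs your three subcases $s=0$, $t^2>s^2$, and $s^2=t^2$, so your appeal to the genus-one curve $y^2=t^4+t^2+1$ from Lemma \ref{lem:case2.1} is not actually needed---the squeeze $(t^2)^2<t^4+t^2+1-s^2<(t^2+1)^2$ already covers those branches.
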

\begin{proof} Assume first that $t=0$. Then the left side of \eqref{eq:case2.2} implies that $x^2=s^2(s^2-1)$. In particular, either $s=0$ or $s^2-1=z^2$ for some $z\in\mathbb{Z}$. However, when $s=0$ we obtain the points $(x,y,s,t)=(0,\pm{1},0,0)$. On the other hand, we see that $(s-z)(s+z)=1$. In particular, we deduce that $s=\pm{1}$ in this case and obtain the two points $(x,y,s,t)=(0,0,\pm{1},0)$. Therefore, we may assume that $t$ is non-zero. Now, suppose that $s^2>t\geq1$. Then $-s^2+1<t^2$ and  
\begin{equation*}
\begin{split}  
(s^2-1)^2&=s^4-s^2-s^2+1\\[5pt]
&<s^4-s^2+t^2=(s^2)^2-(s^2-t^2)\\[5pt] 
&<(s^2)^2.
\end{split} 
\end{equation*}
Hence, $s^4-s^2+t^2$ is strictly between two consecutive integer squares and so cannot be a square itself. However, this contradicts the equation on the left of \eqref{eq:case2.2}. On the other hand, if $s^2\leq t^2$, then 
\begin{equation*}
\begin{split}  
(t^2+1)^2&=t^4+t^2+t^2+1\\[5pt]
&>t^4+t^2+1-s^2\\[5pt]
&=t^4+ (t^2-s^2)+1\\[5pt]  
&>t^4=(t^2)^2.
\end{split} 
\end{equation*}
Therefore, $t^4+t^2+1-s^2$ is strictly between two consecutive integer squares and so cannot be a square itself. This contradicts the right of \eqref{eq:case2.2}. In particular, it must be the case that $t=0$, and we obtain only the listed points as claimed.        
\end{proof} 

\begin{lemma}\label{lem:case2.3} Let $x,y,s,t\in\mathbb{Z}$ be such that  
\begin{equation}\label{eq:case2.3}
x^2+s^2-s^4=t^2\qquad\text{and}\qquad y^2-1-t^2-t^4=1-s^2.
\end{equation} 
Then $(x,y,s,t)=(\pm{1},\pm{2},0,\pm{1})$ or $(0,\pm{1},\pm{1},0)$. 
\end{lemma}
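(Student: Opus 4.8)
The plan is to follow the elementary ``trap between consecutive squares'' strategy used throughout this subsection, since the two defining equations rewrite as $x^2 = s^4 - s^2 + t^2$ and $y^2 = t^4 + t^2 + 2 - s^2$. First I would dispose of the degenerate cases. If $s = 0$, the first equation forces $x = \pm t$, while the second becomes $y^2 = t^4 + t^2 + 2$; for $t = 0$ this reads $y^2 = 2$ and has no solution, whereas for $|t| \ge 1$ one has $(t^2)^2 < t^4 + t^2 + 2 \le (t^2+1)^2$ with equality on the right exactly when $|t| = 1$, so the expression is a square only at $|t| = 1$, giving the points $(\pm 1, \pm 2, 0, \pm 1)$. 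If instead $t = 0$, the first equation gives $x^2 = s^2(s^2 - 1)$, so either $s = 0$ (no solution, as $y^2 = 2$ again) or $s^2 - 1$ is a square and hence $s = \pm 1$, $x = 0$, and then $y^2 = 2 - s^2 = 1$, yielding $(0, \pm 1, \pm 1, 0)$.

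With both $s$ and $t$ nonzero I may assume, after replacing $(s,t)$ by $(\pm s, \pm t)$, that $s, t \ge 1$, and I would split on the comparison of $s^2$ and $t^2$. If $s^2 > t^2 \ge 1$, then $s^2 + t^2 - 1 > 0$ and $s^2 - t^2 > 0$ give $(s^2 - 1)^2 < s^4 - s^2 + t^2 < (s^2)^2$, so the right-hand side of the first equation cannot be a square; this is the same sandwich appearing in Lemma~\ref{lem:case1.2}. Symmetrically, if $t^2 > s^2 \ge 1$, I would instead use the second equation: here $t^2 + 2 - s^2 > 0$ and $t^2 - 1 + s^2 > 0$ give $(t^2)^2 < t^4 + t^2 + 2 - s^2 < (t^2+1)^2$, again excluding a square. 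Hence the only surviving possibility for nonzero $s,t$ is $s^2 = t^2$.

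Finally, substituting $t^2 = s^2$ into the second equation collapses it to $y^2 = s^4 + 2$, and since $(s^2)^2 < s^4 + 2 < (s^2+1)^2$ for every $s \ne 0$ (the gap $2s^2 + 1$ exceeds $2$), this has no integral solution; thus there are no solutions with $s,t$ both nonzero, and collecting the degenerate cases yields exactly the listed tuples. I do not expect a genuine obstacle here: the equation $y^2 = s^4 + 2$ produced by the $s^2 = t^2$ collapse is dispatched by the same one-line sandwiching, so---in contrast to Lemma~\ref{lem:case1.4}, whose analogous step $y^2 = t^4 - 1$ forced a Mordell--Weil computation---no elliptic-curve input is needed. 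The only place demanding slight care is the equality case $|t| = 1$ in the $s = 0$ analysis, where the upper bound $(t^2+1)^2$ is actually attained and genuinely contributes the solutions $(\pm 1, \pm 2, 0, \pm 1)$.
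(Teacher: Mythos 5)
Your proof is correct, and it diverges from the paper in one substantive place: the subcase $s=0$. There the paper treats $y^2=t^4+t^2+2$ as a genus-one problem, passing to a Weierstrass model and invoking a Magma computation of the integral points, whereas you observe that for $|t|\geq 1$ one has $(t^2)^2<t^4+t^2+2\leq(t^2+1)^2$ with equality on the right precisely when $t^2=1$, so the only square values occur at $|t|=1$, yielding $(t,y)=(\pm 1,\pm 2)$ directly. This is a genuinely more elementary route that eliminates the computer-algebra dependency for this lemma; the trade-off is only that you must handle the boundary case of the sandwich (where the upper bound is attained) rather than quoting a finite list of curve points, and you do flag this correctly as the one delicate spot. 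The rest of your argument --- the $t=0$ case, the two strict sandwiches for $s^2>t^2\geq 1$ (on $x^2=s^4-s^2+t^2$) and $t^2>s^2\geq 1$ (on $y^2=t^4+t^2+2-s^2$), and the collapse to $y^2=s^4+2$ when $s^2=t^2$ --- matches the paper's structure; for the last equation you use the sandwich $(s^2)^2<s^4+2<(s^2+1)^2$ where the paper reduces modulo $4$, an immaterial difference. All inequalities check out, and the case analysis ($s=0$, $t=0$, and the trichotomy on $s^2$ versus $t^2$ for $s,t$ nonzero) is exhaustive.
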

\begin{proof} If $s=0$, then $y^2=t^4+t^2+2$. However, the hyperelliptic curve $C$ with affine model $y^2=t^4+t^2+2$ is birational to the elliptic curve $E: ZY^2 = X^3 - 2ZX^2 - 7Z^2X$ via the map $\psi:C\rightarrow E$ given by $(t,y,z)\rightarrow(2t^2z - 2yz + z^3, 4t^3 - 4ty + 2tz^2, z^3)$. In particular, if $P=(t,y,1)$ is an integral point on $C$, then $\psi(P)$ is an integral point on $E$. Moreover, we compute with Magma that the integral points of $E$ are $(X,Y)\in\{(-1,\pm{2}), (0,0), (4,\pm{2}), (7,\pm{14})\}$. Hence, after applying the inverse of $\psi$ to the seven integral points of $E$ listed above, we deduce that the full set of affine integral points of $C$ are $(t,y)=(\pm{1},\pm{2})$. Therefore, we obtain the points $(x,y,s,t)=(\pm{1},\pm{2},0,\pm{1})$ listed above. Now assume that $t=0$, then $y^2=2-s^2$ and we deduce that $(s,y)=(\pm{1},\pm{1})$. In particular, we obtain the solutions $(x,y,s,t)=(0,\pm{1},\pm{1},0)$ listed above. Now we assume for a contradiction that $s^2>t^2\geq1$. Then $-s^2+1<t^2$ and  
\begin{equation*}
\begin{split}  
(s^2-1)^2&=s^4-s^2-s^2+1\\[5pt]
&<s^4-s^2+t^2=(s^2)^2-(s^2-t^2)\\[5pt] 
&<(s^2)^2.
\end{split} 
\end{equation*}
Hence, $s^4-s^2+t^2$ cannot be a square itself, contradicting the left side of \eqref{eq:case2.3}. On the other hand, if $t^2>s^2\geq1$, then $t^2>1$ and  
\begin{equation*}
\begin{split}
(t^2+1)^2&=t^4+t^2+t^2+1\\[5pt] 
&> t^4+t^2+2\\[5pt]
&> t^4+t^2+2-s^2\\[5pt] 
&=t^4+(t^2-s^2)+2\\[5pt] 
&>t^4=(t^2)^2. 
\end{split} 
\end{equation*} 
Hence, $t^4+t^2+2-s^2$ cannot be an integral square, contradicting the right side of \eqref{eq:case2.3}. Therefore, we may assume that $s^2=t^2$. However, in this case we obtain an equation $y^2=t^4+2$ with no solutions mod $4$. Therefore, there are only the solutions corresponding to $s=0$ and $t=0$ listed above. 
\end{proof} 

\begin{lemma}\label{lem:case2.4} Let $x,y,s,t\in\mathbb{Z}$ be such that  
\begin{equation}\label{eq:case2.4}
x^2+s^2-s^4=t^2\qquad\text{and}\qquad y^2-1-t^2-t^4=-(1-s^2).
\end{equation} 
Then $(x,y,s,t)=(0,0,0,0)$ or $(0,\pm{1},\pm{1},0)$. 
\end{lemma}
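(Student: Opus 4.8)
The plan is to follow the template established in the preceding lemmas of this subsection, proceeding by cases on whether $s$ or $t$ vanishes, then using a ``squeezing'' argument between consecutive squares to force $s^2=t^2$, and finally handling the diagonal case with a congruence obstruction. First I would dispose of the degenerate cases. If $s=0$, then the left equation of \eqref{eq:case2.4} gives $x^2=t^2$, while the right equation becomes $y^2=1+t^2+t^4-1=t^2(t^2+1)$; since $t^2+1$ is a square only when $t=0$ (as $(t-w)(t+w)=1$ forces $w=0$, $t^2=-1$ is impossible, so in fact $t^2+1=w^2$ has only $t=0$), we get $t=0$ and thus $(x,y,s,t)=(0,0,0,0)$. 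If instead $t=0$, the right equation reads $y^2-1=-(1-s^2)=s^2-1$, so $y^2=s^2$, and the left equation gives $x^2=s^4-s^2=s^2(s^2-1)$; as in Lemma \ref{lem:case1.1}, $s^2-1$ is a square forces $s=\pm1$ (or $s=0$, already covered), yielding $(x,y,s,t)=(0,\pm1,\pm1,0)$.

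Next, assuming both $s$ and $t$ are nonzero, I would run the interval-squeezing argument in both directions to rule out $s^2\neq t^2$. Suppose $s^2>t^2\geq1$; then using $-s^2+1<t^2-\text{(something nonnegative)}$ I expect the chain
\begin{equation*}
\begin{split}
(s^2-1)^2&=s^4-s^2-s^2+1\\[5pt]
&<s^4-s^2+t^2=(s^2)^2-(s^2-t^2)\\[5pt]
&<(s^2)^2
\end{split}
\end{equation*}
to show $s^4-s^2+t^2$ lies strictly between consecutive squares, contradicting the left equation of \eqref{eq:case2.4}. For the reverse inequality $t^2>s^2\geq1$, I would instead bound the quantity $t^4+t^2+s^2-1$ (coming from the right equation, rewritten as $y^2=t^4+t^2+1-1+s^2=t^4+t^2+s^2$, so I must be careful with the exact constant) between $(t^2)^2$ and $(t^2+1)^2$; the precise bracketing will determine which consecutive squares to use, and I anticipate a short computation showing $t^4+t^2+s^2$ is trapped for $s^2<t^2$.

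Once $s^2=t^2$ is forced, substituting into the right equation of \eqref{eq:case2.4} collapses it to a single-variable Diophantine equation in $t$ (I expect something like $y^2=t^4+2t^2$ or $y^2=t^4+t^2+s^2=t^4+2t^2$, i.e. $y^2=t^2(t^2+2)$), and then $t^2+2=z^2$ has no solutions modulo $4$, eliminating all nonzero diagonal solutions. The main obstacle I foresee is getting the exact constants right in the squeezing inequalities: the right-hand equation here has the form $y^2=t^4+t^2+s^2$ (after moving the $1-s^2$ and the $-1-t^2-t^4$ across), and because the sign pattern differs from the earlier lemmas, I must recompute precisely which pair of consecutive squares brackets the relevant quartic in each of the two subcases $s^2>t^2$ and $t^2>s^2$; a sign error there would break the contradiction. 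Everything else reduces to the standard toolkit already used above (factoring $(t-w)(t+w)=1$, reduction mod $4$), so no new curve computations should be needed for this particular case.
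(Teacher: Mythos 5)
Your proposal is correct and follows essentially the same route as the paper's proof: the same case split on $s=0$ and $t=0$, the same squeeze of $s^4-s^2+t^2$ between $(s^2-1)^2$ and $(s^2)^2$ when $s^2>t^2$, the bracketing $t^4<t^4+t^2+s^2<(t^2+1)^2$ (which does hold since $s^2<t^2+1$) when $t^2>s^2$, and the final reduction of the diagonal case to $t^2+2=z^2$ having no solutions mod $4$. The only quibbles are cosmetic: the factorization for $t^2+1=w^2$ should read $(w-t)(w+t)=1$, and your hesitation over the constant in the right-hand quantity resolves to $y^2=t^4+t^2+s^2$ exactly as you concluded.
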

\begin{proof} Suppose that $s=0$. Then $y^2=t^2(t^2+1)$, which implies $t=0$ or $t^2+1=z^2$ for some $z\in\mathbb{Z}$. However, the latter case also implies that $t=0$. Therefore, we obtain the solution $(x,y,s,t)=(0,0,0,0)$. On the other hand, if $t=0$ and $s\neq0$, then $x^2=s^2(s^2-1)$ implies that $z^2=s^2-1$ for some $z\in\mathbb{Z}$. However, it is easy to check by factoring that the only integral solutions to this equation are $(s,z)=(\pm{1},0)$. Therefore, we add the solutions $(x,y,s,t)=(0,\pm{1},\pm{1},0)$ in this case. Now then we may assume that neither $s$ nor $t$ is zero. Moreover, assume for a contradiction that $s^2>t^2\geq1$. Then $-s^2+1<t^2$ and  
\begin{equation*}
\begin{split}  
(s^2-1)^2&=s^4-s^2-s^2+1\\[5pt]
&<s^4-s^2+t^2=(s^2)^2-(s^2-t^2)\\[5pt] 
&<(s^2)^2.
\end{split} 
\end{equation*}
Hence, $s^4-s^2+t^2$ cannot be a square itself, contradicting the left side of \eqref{eq:case2.4}. Likewise, if $t^2>s^2\geq1$, then \vspace{.1cm}  
\begin{equation*}
\begin{split}
(t^2+1)^2&=t^4+t^2+t^2+1 \\[5pt] 
&>t^4+t^2+s^2\\[5pt]
&>t^4=(t^2)^2. 
\end{split} 
\end{equation*} 
Hence, $t^4+t^2+s^2$ cannot be an integral square, contradicting the right side of \eqref{eq:case2.4}. Therefore, we may assume that $s^2=t^2$ and both $s$ and $t$ are non-zero. But then $y^2=t^2(t^2+2)$ so that $t^2+2=z^2$ for some $z\in\mathbb{Z}$. However, this equation has no solutions modulo $4$. Therefore, the only solutions integral solutions to \eqref{eq:case2.4} are those listed above.     
\end{proof} 

\begin{lemma}\label{lem:case2.5} Let $x,y,s,t\in\mathbb{Z}$ be such that  
\begin{equation}\label{eq:case2.5}
x^2+s^2-s^4=-t^2\qquad\text{and}\qquad y^2-1-t^2-t^4=s^2.
\end{equation} 
Then $(x,y,s,t)=(0,\pm{1},0,0)$. 
\end{lemma}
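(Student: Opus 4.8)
The plan is to follow the same squeezing strategy used throughout this subsection: rewrite the two equations so that one side is a perfect square trapped between two consecutive squares, and separately dispose of the degenerate cases $s=0$ and $t=0$ by elementary factoring. First I would rewrite \eqref{eq:case2.5} in the more convenient form
\[
x^2+t^2 = s^2(s^2-1) \qquad\text{and}\qquad y^2 = t^4+t^2+s^2+1.
\]

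Next I would clear the boundary cases. If $s=0$, the first equation forces $x=t=0$, and then the second gives $y^2=1$, producing exactly $(x,y,s,t)=(0,\pm1,0,0)$. If instead $t=0$, the second equation becomes $y^2=s^2+1$, i.e.\ $(y-s)(y+s)=1$, which forces $s=0$ and returns us to the previous case. Hence from here on I may assume $s\neq0$ and $t\neq0$; in particular the first equation forces $s^2(s^2-1)\geq t^2>0$, so that $|s|\geq2$.

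Now for the main squeeze. From the second equation, $y^2-(t^2)^2 = t^2+s^2+1>0$ always, while $y^2-(t^2+1)^2 = s^2-t^2$; so if $s^2<t^2$ then $t^4<y^2<(t^2+1)^2$, an impossibility, forcing $s^2\geq t^2$. Symmetrically, the first equation gives $x^2=s^4-s^2-t^2$, which satisfies $x^2<(s^2)^2$ always, while $x^2-(s^2-1)^2 = s^2-t^2-1$; so if $s^2>t^2+1$ then $(s^2-1)^2<x^2<(s^2)^2$, again impossible, forcing $s^2\leq t^2+1$. Together these inequalities give $t^2\leq s^2\leq t^2+1$, so either $s^2=t^2+1$ or $s^2=t^2$.

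Finally I would eliminate both surviving cases. If $s^2=t^2+1$ then $(s-t)(s+t)=1$ forces $t=0$, contradicting $t\neq0$. If $s^2=t^2$, substituting into the first equation yields $x^2=s^2(s^2-2)$, so (since $s\neq0$) $s^2-2$ must be a perfect square; but $z^2=s^2-2$ has no solutions modulo $4$. This exhausts all cases and leaves only $(0,\pm1,0,0)$. The only real subtlety is that each of the two squeezes is non-strict at one endpoint, so the boundary values $s^2=t^2$ and $s^2=t^2+1$ must be handled separately rather than by the inequality itself; both succumb at once to a difference-of-squares or mod-$4$ argument, and no elliptic curve computation is needed in this case.
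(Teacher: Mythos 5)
Your proof is correct and follows essentially the same route as the paper's: dispose of $s=0$ and $t=0$ by factoring, squeeze $x^2$ between $(s^2-1)^2$ and $(s^2)^2$ and $y^2$ between $(t^2)^2$ and $(t^2+1)^2$ to force $t^2\leq s^2\leq t^2+1$, and kill the remaining cases with a difference-of-squares and a mod-$4$ argument. The only difference is cosmetic — the paper excludes $s^2-t^2=1$ before running the squeeze rather than after — so there is nothing to change.
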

\begin{proof} If $s=0$, then $x^2=-t^2$ and so $x=0=t$. In particular, we obtain the solutions $(x,y,s,t)=(0,\pm{1},0,0)$. On the other hand, if $t=0$, then $y^2-1=s^2$. It follows by factoring $(y-s)(y+s)=1$ that $(y,s)=(\pm{1},0)$, and we recapture the solution $(x,y,s,t)=(0,\pm{1},0,0)$. Therefore, we may assume that neither $s$ nor $t$ is zero. Now suppose for a contradiction that $s^2>t^2\geq1$. Note that if $s^2-t^2=1$, then either $s-t=1=s+t$ or $s-t=-1=s+t$. But in either case, this implies that $t=0$, a contradiction. Therefore, we may assume that $s^2-t^2>1$. Hence, $-s^2+1<-t^2$ and  
\begin{equation*}\label{eq:nosolutions2} 
\begin{split}
(s^2-1)^2&=s^4-s^2+(-s^2+1)\\[5pt] 
&<s^4-s^2-t^2=s^4-(s^2+t^2)\\[5pt]
&<s^4=(s^2)^2.   
\end{split} 
\end{equation*}
Therefore, $s^4-s^2-t^2$ cannot be an integral square, contradicting the left side of \eqref{eq:case2.5}. On the other hand, if $t^2>s^2\geq1$, then 
\begin{equation*}
\begin{split}  
(t^2+1)^2&=t^4+t^2+t^2+1\\[5pt]
&>t^4+t^2+1+s^2\\[5pt] 
&>t^4=(t^2)^2.
\end{split} 
\end{equation*} 
Therefore, $t^4+t^2+1+s^2$ cannot be a square, contradicts the right of \eqref{eq:case2.5}. Therefore, we can assume that $s^2=t^2$ and that both $s$ and $t$ are not zero. But in this case, $x^2=s^2(s^2-2)$ implies that $s^2-2=z^2$ for some $z\in\mathbb{Z}$. However, this equation has no solutions modulo $4$. Therefore, the only integral solutions to \eqref{eq:case2.5} are those corresponding to $s=0=t$ already listed above.   
\end{proof} 

\begin{lemma}\label{lem:case2.6} Let $x,y,s,t\in\mathbb{Z}$ be such that  
\begin{equation}\label{eq:case2.6}
x^2+s^2-s^4=-t^2\qquad\text{and}\qquad y^2-1-t^2-t^4=-s^2.
\end{equation} 
Then $(x,y,s,t)=(0,\pm{1},0,0)$ or $(0,0,\pm{1},0)$. 
\end{lemma}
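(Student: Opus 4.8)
The plan is to solve the system
\begin{equation*}
x^2+s^2-s^4=-t^2\qquad\text{and}\qquad y^2-1-t^2-t^4=-s^2
\end{equation*}
by the same two-pronged strategy used throughout this subsection: first dispose of the degenerate cases $s=0$ and $t=0$ by elementary factoring, and then, in the generic case where $s$ and $t$ are both non-zero, trap one of the relevant quartic expressions strictly between two consecutive integer squares to force $s^2=t^2$, after which a final reduction mod $4$ (or an explicit small-case check) eliminates the remaining possibility.

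First I would handle $s=0$: the left equation becomes $x^2=-t^2$, forcing $x=t=0$, and then the right equation gives $y^2=1$, so $(x,y,s,t)=(0,\pm 1,0,0)$. Next I would handle $t=0$: the left equation becomes $x^2=s^2(s^2-1)$, so either $s=0$ (recovering the solution above) or $s^2-1=z^2$ for some $z\in\mathbb{Z}$; factoring $(s-z)(s+z)=1$ gives $s=\pm 1$, $z=0$, and then the right equation with $t=0$ reads $y^2-1=-s^2=-1$, so $y=0$, yielding $(x,y,s,t)=(0,0,\pm 1,0)$. From here I would assume $s,t\neq 0$.

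In the generic case the main work is the squeeze argument. Assuming $s^2\geq t^2\geq 1$, I expect the left-hand equation $x^2=s^4-s^2-t^2$ to be the productive one: since $s^2-t^2\geq 0$ and (after checking $s^2-t^2=1$ forces $t=0$, a contradiction) one has $s^2-t^2>1$, I would show
\begin{equation*}
(s^2-1)^2=s^4-2s^2+1<s^4-s^2-t^2<s^4=(s^2)^2,
\end{equation*}
so $s^4-s^2-t^2$ lies strictly between consecutive squares and cannot equal $x^2$. By the symmetry-breaking nature of the two equations the opposite inequality $t^2>s^2\geq 1$ should be ruled out by an analogous squeeze on the right-hand expression $y^2=t^4+t^2-s^2+1$, comparing it against $(t^2+1)^2=t^4+2t^2+1$. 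Thus $s^2=t^2$ with $s,t\neq 0$ is the only surviving generic possibility.

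Finally, setting $s^2=t^2$ in the left equation gives $x^2=s^4-2s^2=s^2(s^2-2)$, so since $s\neq 0$ we need $s^2-2=z^2$ for some $z\in\mathbb{Z}$; this has no solutions modulo $4$, so the generic case is empty. The only integral solutions are therefore the degenerate ones $(0,\pm 1,0,0)$ and $(0,0,\pm 1,0)$ found above. The step I expect to require the most care is verifying the correct direction of the two squeezes and confirming that the boundary subcase $s^2-t^2=1$ (and its counterpart) indeed collapses to a contradiction, since the precise strict inequalities depend delicately on the signs of $t^2$ and $-s^2$ appearing in each quartic; but these are exactly the routine $\pm$ bookkeeping checks that the earlier lemmas in this subsection model closely, so no genuinely new obstacle is anticipated.
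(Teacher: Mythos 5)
Your proof is correct and follows essentially the same route as the paper's: dispose of $s=0$ and $t=0$ by elementary factoring, squeeze $s^4-s^2-t^2$ (resp.\ $t^4+t^2+1-s^2$) strictly between consecutive squares when $s^2>t^2\geq 1$ (resp.\ $t^2>s^2\geq 1$), and eliminate $s^2=t^2$ via $x^2=s^2(s^2-2)$ and the non-solvability of $s^2-2=z^2$ modulo $4$. One small wording fix: the hypothesis for your first squeeze should read $s^2>t^2\geq 1$ rather than $s^2\geq t^2\geq 1$, since for $s^2=t^2$ the inequality $(s^2-1)^2<s^4-s^2-t^2$ fails and the deduction ``$s^2-t^2\geq 0$ and $\neq 1$, hence $>1$'' skips the value $0$ --- but your later treatment of $s^2=t^2$ as the surviving case shows this is only a slip of phrasing, not of logic.
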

\begin{proof} Suppose that $s=0$. Then $x^2=-t^2$ and thus $x=0=t$; hence, we obtain the solution $(x,y,s,t)=(0,\pm{1},0,0)$ in this case. Likewise, if $t=0$ then $y^2+s^2=1$ and thus $(y,s)\in\{(0,\pm{1}), (\pm{1},0)\}$. In the first case, we obtain the new solutions $(x,y,s,t)=(0,0,\pm{1},0)$; in the second, we rediscover the solutions correspond to $t=0$. Therefore, we may assume that neither $s$ nor $t$ is zero. On the other hand, if $s^2>t^2\geq1$, then the same argument given in the proof of Lemma \ref{lem:case2.5} implies that $s^4-s^2-t^2$ cannot be an integral square, contradicting the left side of \eqref{eq:case2.6}. Likewise, the proof of Lemma \ref{lem:case2.2} implies that $t^2>s^2\geq1$ is impossible: otherwise, $t^4+t^2+1-s^2$ cannot be an integral square, contradicting the right side of \eqref{eq:case2.6}. Hence, we may assume that $s^2=t^2$ and $s$ is not zero. But in this case $x^2=s^2(s^2-2)$, which implies that $s^2-2=z^2$ for some $z\in\mathbb{Z}$, a contradiction; this equation has no solutions modulo $4$. Therefore, we have found all of the integral solutions to \eqref{eq:case2.6}.          
\end{proof} 

\begin{lemma}\label{lem:case2.7} Let $x,y,s,t\in\mathbb{Z}$ be such that  
\begin{equation}\label{eq:case2.7}
x^2+s^2-s^4=-t^2\qquad\text{and}\qquad y^2-1-t^2-t^4=1-s^2.
\end{equation} 
Then $(x,y,s,t)=(0,\pm{1},\pm{1},0)$.   
\end{lemma}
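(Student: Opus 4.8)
The plan is to follow the same template that settles the previous lemmas in this subsection. First I would rewrite the system as $x^2 = s^4 - s^2 - t^2$ and $y^2 = t^4 + t^2 + 2 - s^2$, and dispose of the boundary cases. If $s = 0$, the first equation forces $x^2 + t^2 = 0$, hence $x = t = 0$, and then the second equation reads $y^2 = 2$, which is impossible; so there are no solutions with $s = 0$. If $t = 0$, the second equation becomes $y^2 = 2 - s^2$, which forces $s^2 \le 2$, hence $s \in \{0, \pm 1\}$; ruling out $s = 0$ as above, only $s = \pm 1$ survives, giving $y = \pm 1$, and the first equation then yields $x^2 = s^2(s^2-1) = 0$, so $x = 0$. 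This recovers exactly the claimed solutions $(0, \pm 1, \pm 1, 0)$.

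For the main case I would assume $s$ and $t$ are both nonzero and run the now-familiar ``trapped between consecutive squares'' argument, choosing the first equation when $s$ dominates and the second when $t$ dominates. If $s^2 > t^2 \ge 1$, I would first note that $s^2 - t^2 = 1$ forces $t = 0$ (via $(s-t)(s+t) = 1$), a contradiction, so $s^2 - t^2 > 1$; then the chain $(s^2 - 1)^2 < s^4 - s^2 - t^2 < (s^2)^2$ shows $x^2$ is strictly between consecutive squares, contradicting the first equation. If instead $t^2 > s^2 \ge 1$, then $(t^2)^2 < t^4 + t^2 + 2 - s^2 < (t^2 + 1)^2$ traps $y^2$ between consecutive squares, contradicting the second equation.

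This forces $s^2 = t^2$ with both nonzero. Substituting into the first equation gives $x^2 = s^4 - 2s^2 = s^2(s^2 - 2)$, so since $s \ne 0$ one needs $s^2 - 2 = z^2$ for some $z \in \mathbb{Z}$; but $s^2 - z^2 = 2$ has no solutions modulo $4$ (squares are $0$ or $1$ mod $4$, so their difference is never $2$). Hence no solutions arise with both $s$ and $t$ nonzero, and the solution set is exactly $(0, \pm 1, \pm 1, 0)$.

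As for the main obstacle, there is essentially none of the kind that appears elsewhere in this section: no auxiliary elliptic curve or Runge estimate is needed here. The only points requiring a little care are matching each squeeze inequality to the correct equation (the $s$-dominant regime to the first equation, the $t$-dominant regime to the second) and remembering to peel off the degenerate subcase $s^2 - t^2 = 1$ before invoking the strict inequalities.
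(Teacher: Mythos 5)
Your proof is correct and follows essentially the same route as the paper's: dispose of $s=0$ and $t=0$, squeeze $x^2=s^4-s^2-t^2$ between $(s^2-1)^2$ and $(s^2)^2$ when $s^2>t^2$ (after excluding $s^2-t^2=1$), squeeze $y^2=t^4+t^2+2-s^2$ between $(t^2)^2$ and $(t^2+1)^2$ when $t^2>s^2$, and kill $s^2=t^2$ via $x^2=s^2(s^2-2)$ and a mod $4$ argument. The only cosmetic difference is that in the $t=0$ case you bound $s$ from the second equation rather than forcing $s^2-1$ to be a square from the first; both give $s=\pm1$.
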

\begin{proof} 
If $s=0$, then $x^2=-t^2$ and thus $x=0=t$. However, this implies that $y^2=2$, a contradiction. On the other hand if $t=0$, then $x^2=s^2(s^2-1)$ and the fact that $s\neq0$ imply that $s^2-1=z^2$ for some $z\in\mathbb{Z}$. In particular, we deduce that $s=\pm{1}$ by factoring $(s-z)(s+z)=1$. Therefore, we obtain the solutions $(x,y,s,t)=(0,\pm{1},\pm{1},0)$ in this case. Hence, we may assume that neither $s$ nor $t$ is zero. On the other hand, if $s^2>t^2\geq1$, then the same argument given in the proof of Lemma \ref{lem:case2.5} implies that $s^4-s^2-t^2$ cannot be an integral square, contradicting the left side of \eqref{eq:case2.7}. Likewise, the proof of Lemma \ref{lem:case2.3} implies that $t^2>s^2\geq1$ is impossible: otherwise, $t^4+t^2+2-s^2$ cannot be an integral square, contradicting the right side of \eqref{eq:case2.7}. Hence, we may assume that $s^2=t^2$ and that $s\neq0$. But then $x^2=s^2(s^2-2)$, which implies that $s^2-2=z^2$ for some $z\in\mathbb{Z}$. However, this equation has no solutions modulo $4$, a contradiction. Therefore, we have found all of the integral solutions to \eqref{eq:case2.7}. 
\end{proof} 

\begin{lemma}\label{lem:case2.8} Let $x,y,s,t\in\mathbb{Z}$ be such that  
\begin{equation}\label{eq:case2.8}
x^2+s^2-s^4=-t^2\qquad\text{and}\qquad y^2-1-t^2-t^4=-(1-s^2).
\end{equation} 
Then $(x,y,s,t)=(0,0,0,0)$ or $(0,\pm{1},\pm{1},0)$.   
\end{lemma}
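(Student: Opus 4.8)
The plan is to follow the same template used for the neighbouring cases: first rewrite the system, then dispose of the degenerate cases $s=0$ and $t=0$ by elementary factoring, next force $s^2=t^2$ for nonzero $s,t$ via a pair of ``trapped between consecutive squares'' estimates, and finally kill the diagonal $s^2=t^2$ with a congruence obstruction.

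First I would rewrite \eqref{eq:case2.8} as
\[x^2=s^4-s^2-t^2 \qquad\text{and}\qquad y^2=t^4+t^2+s^2.\]
If $s=0$, the left equation gives $x^2=-t^2$, forcing $x=t=0$, and then the right equation gives $y=0$, yielding $(0,0,0,0)$. If $t=0$ and $s\neq 0$, the left equation reads $x^2=s^2(s^2-1)$, so $s^2-1$ is a perfect square; factoring $(s-z)(s+z)=1$ forces $s=\pm 1$ and $x=0$, after which $y^2=s^2=1$ gives the points $(0,\pm 1,\pm 1,0)$. These are precisely the claimed solutions, so it remains to rule out solutions with both $s$ and $t$ nonzero.

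Next I would run the usual squeeze, noting that both equations coincide with ones already analyzed. The left-hand equation $x^2=s^4-s^2-t^2$ is identical to the one in Lemma \ref{lem:case2.5}, whose proof traps $s^4-s^2-t^2$ strictly between $(s^2-1)^2$ and $(s^2)^2$ when $s^2>t^2\geq 1$; invoking that argument excludes $s^2>t^2$. Symmetrically, the right-hand equation $y^2=t^4+t^2+s^2$ matches the one in Lemma \ref{lem:case2.4}, whose proof traps $t^4+t^2+s^2$ strictly between $(t^2)^2$ and $(t^2+1)^2$ when $t^2>s^2\geq 1$; invoking that excludes $t^2>s^2$. Hence $s^2=t^2$.

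Finally, substituting $s^2=t^2$ into the left equation gives $x^2=s^4-2s^2=s^2(s^2-2)$, so (using $s\neq 0$) the integer $s^2-2$ must itself be a perfect square; but $s^2-2$ is congruent to $2$ or $3$ modulo $4$, so this is impossible. This contradiction closes the case of nonzero $s,t$ and leaves only the solutions found above. The only mildly delicate point is ensuring the squeeze inequalities are strict --- in particular that the boundary case $s^2-t^2=1$ (which would force $t=0$) is handled before applying the estimate of Lemma \ref{lem:case2.5} --- but this is dealt with exactly as in the earlier lemmas, so I expect no genuine obstacle here.
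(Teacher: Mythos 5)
Your proposal is correct and follows essentially the same route as the paper: dispose of $s=0$ and $t=0$ by factoring, squeeze $s^4-s^2-t^2$ and $t^4+t^2+s^2$ between consecutive squares (citing the arguments of Lemmas \ref{lem:case2.5} and \ref{lem:case2.4}, exactly as the paper does) to force $s^2=t^2$, and then eliminate the diagonal via $x^2=s^2(s^2-2)$ and a mod $4$ obstruction. Your explicit attention to the boundary case $s^2-t^2=1$ is handled in the paper inside the cited Lemma \ref{lem:case2.5}, so there is no discrepancy.
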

\begin{proof} If $s=0$, then $x^2=-t^2$ and thus $x=0=t$ and $y^2=0$. In particular, we find the solution $(x,y,s,t)=(0,0,0,0)$ as claimed. On the other hand, when $t=0$ and $s\neq0$ we see that $x^2=s^2(s^2-1)$. Hence, $s^2-1=z^2$ for some $z\in\mathbb{Z}$. However, by factoring $(s-z)(s+z)=1$, we deduce that $(s,z)=(\pm{1},0)$ and obtain the solutions  $(x,y,s,t)=(0,\pm{1},\pm{1},0)$ listed above. Therefore, we may assume that neither $s$ nor $t$ is zero. On the other hand, if $s^2>t^2\geq1$, then the same argument given in the proof of Lemma \ref{lem:case2.5} implies that $s^4-s^2-t^2$ cannot be an integral square, contradicting the left side of \eqref{eq:case2.8}. Likewise, the proof of Lemma \ref{lem:case2.4} implies that $t^2>s^2\geq1$ is impossible: otherwise, $t^4+t^2+s^2$ cannot be an integral square, contradicting the right side of \eqref{eq:case2.8}. Hence, we may assume that $s^2=t^2$ and that $s\neq0$. But then $x^2=s^2(s^2-2)$, which implies that $s^2-2=z^2$ for some $z\in\mathbb{Z}$. However, this equation has no solutions mod $4$, a contradiction. Thus, we have found all integral solutions to \eqref{eq:case2.8}.   
\end{proof}

\begin{lemma}\label{lem:case2.9} Let $x,y,s,t\in\mathbb{Z}$ be such that  
\begin{equation}\label{eq:case2.9}
x^2+s^2-s^4=1+t^2\qquad\text{and}\qquad y^2-1-t^2-t^4=s^2.
\end{equation} 
Then $(x,y,s,t)=(\pm{1},\pm{1},0,0)$. 
\end{lemma}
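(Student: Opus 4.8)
The plan is to follow the same template used throughout this subsection: first dispose of the degenerate cases $s=0$ and $t=0$ by hand, then assume $s$ and $t$ are both nonzero and force $s^2=t^2$ by squeezing one side or the other strictly between consecutive integer squares, and finally eliminate the surviving diagonal case $s^2=t^2$.

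First I would set $s=0$. The left equation of \eqref{eq:case2.9} becomes $x^2=t^2+1$, so $(x-t)(x+t)=1$; this forces $t=0$ and $x=\pm1$, after which the right equation gives $y^2=1$ and hence $y=\pm1$. Turning to $t=0$, the right equation becomes $y^2=s^2+1$, and the identical factorization $(y-s)(y+s)=1$ forces $s=0$, recovering exactly the points $(\pm1,\pm1,0,0)$. So from here on I assume $s$ and $t$ are both nonzero.

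Next comes the squeeze, using the rewritten forms $x^2=s^4-s^2+t^2+1$ and $y^2=t^4+t^2+s^2+1$. If $s^2>t^2$, I would note that $s^2-t^2=1$ is impossible for nonzero $t$ (it factors as $(s-t)(s+t)=1$ and forces $t=0$), so $s^2-t^2\geq2$; then
\[(s^2-1)^2=s^4-2s^2+1<s^4-s^2+t^2+1<s^4=(s^2)^2,\]
placing $x^2$ strictly between consecutive squares, a contradiction. If instead $t^2>s^2$, I would use the other equation exactly as in the proof of Lemma \ref{lem:case2.1}:
\[(t^2)^2=t^4<t^4+t^2+s^2+1<t^4+2t^2+1=(t^2+1)^2,\]
again trapping $y^2$ between consecutive squares. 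Hence $s^2=t^2$.

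Finally, with $s^2=t^2$ and $s\neq0$, the left equation collapses to $x^2=s^4+1$, so $(x-s^2)(x+s^2)=1$ forces $s=0$, contradicting $s\neq0$. Therefore the only integral solutions are $(x,y,s,t)=(\pm1,\pm1,0,0)$. I expect the only point requiring care to be the bookkeeping in the squeeze step, namely pairing $s^2>t^2$ with the left equation and $t^2>s^2$ with the right equation, and disposing of the borderline $s^2-t^2=1$ so that the inequalities are strict; everything else is routine factoring together with the reduction to $x^2=s^4+1$.
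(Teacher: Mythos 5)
Your proof is correct and follows the paper's overall template (dispose of $s=0$ and $t=0$, squeeze to force $s^2=t^2$, then kill the diagonal case), but it is genuinely more elementary in two places. For $s=0$ the paper works with the right-hand equation $y^2=t^4+t^2+1$ and invokes the elliptic-curve computation from Lemma \ref{lem:case2.1} ($E(\mathbb{Q})\cong\mathbb{Z}/2\mathbb{Z}\times\mathbb{Z}/2\mathbb{Z}$ via Magma), whereas you instead read off $x^2-t^2=1$ from the \emph{left} equation and factor, which immediately forces $t=0$. For the diagonal case $s^2=t^2$ the paper again passes to the hyperelliptic curve $x^2=s^4+1$, maps it birationally to $Y^2=X^3-4X$, and computes the Mordell--Weil group with Magma; you simply factor $(x-s^2)(x+s^2)=1$ and conclude $s=0$ directly. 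Both of your shortcuts are valid (the factorization $(x-s^2)(x+s^2)=1$ forces $x-s^2=x+s^2=\pm1$, hence $s=0$), and they eliminate the two computer-algebra steps; the squeeze arguments, including the careful exclusion of the borderline $s^2-t^2=1$, match the paper's. What the paper's heavier machinery buys is reusability --- the curves $y^2=t^4+t^2+1$ and $x^2=s^4+1$ recur in several other lemmas of this section (e.g.\ Lemmas \ref{lem:case2.10}, \ref{lem:case2.11}, \ref{lem:case2.12}, \ref{lem:case3.2}), so the authors establish their rational points once and cite them repeatedly --- but for this lemma in isolation your elementary route is cleaner.
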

\begin{proof} If $s=0$, then $y^2=t^4+t^2+1$. However, we showed in Lemma \ref{lem:case2.1} that the hyperelliptic curve $C: y^2=t^4+t^2+1$ has only two affine integral (in fact rational) points $(t,y)=(0,\pm{1})$. In particular, we find the solutions $(x,y,s,t)=(\pm{1},\pm{1},0,0)$ to \eqref{eq:case2.9} listed above. On the other hand, if $t=0$, then $(y-s)(y+s)=1$ implies that $(s,y)=(0,\pm{1})$. Hence, we rediscover the solutions $(x,y,s,t)=(\pm{1},\pm{1},0,0)$ in this case. Therefore, we may assume that both $s$ and $t$ are non-zero. Now assume for a contradiction that $s^2>t^2\geq1$. Note that if $s^2-t^2=1$, then $(s-t)(s+t)=1$. However, this implies that $t=0$, a contradiction. Therefore, $s^2-t^2>1$ and 
\begin{equation*}
\begin{split}  
(s^2-1)^2&=s^4-s^2-s^2+1\\[5pt]
&<s^4-s^2+t^2+1\\[5pt]
&=s^4-(s^2-t^2-1) \\[5pt] 
&<s^4=(s^2)^2.
\end{split} 
\end{equation*}
Hence, $s^4-s^2+t^2+1$ cannot be an integral square, contradicting the left side of \eqref{eq:case2.9}. On the other hand, if $t^2>s^2\geq1$, then the same argument in the proof of Lemma \ref{lem:case2.5} implies that $t^4+t^2+1+s^2$ cannot be an integral square, contradicting the right side of \eqref{eq:case2.9}. Therefore, we may assume that $s^2=t^2$. But in this case $x^2=s^4+1$. However, the hyperelliptic curve B with affine model $x^2=s^4+1$ is birational over $\mathbb{Q}$ to $E: Y^2 = X^3 - 4X$. Moreover, we compute with Magma that $E(\mathbb{Q})\cong\mathbb{Z}/2\mathbb{Z}\times\mathbb{Z}/2\mathbb{Z}$. In particular $\#C(\mathbb{Q})=4$, and hence (excluding the two rational points at infinity) we deduce that $(s,x)=(0,\pm{1})$. Therefore, we again obtain the solutions $(x,y,s,t)=(\pm{1},\pm{1},0,0)$. Moreover, we may conclude that these are the only integral solutions to \eqref{eq:case2.9} as claimed.              
\end{proof} 

\begin{lemma}\label{lem:case2.10} Let $x,y,s,t\in\mathbb{Z}$ be such that  
\begin{equation}\label{eq:case2.10}
x^2+s^2-s^4=1+t^2\qquad\text{and}\qquad y^2-1-t^2-t^4=-s^2.
\end{equation} 
Then $(x,y,s,t)=(\pm{1},\pm{1},0,0)$ or $(\pm{1},0,\pm{1},0)$. 
\end{lemma}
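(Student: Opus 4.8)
The plan is to follow the template established in the preceding lemmas of this subsection: first dispose of the degenerate cases $s=0$ and $t=0$ by elementary factorization, then show that for $s,t$ both nonzero the two equations force $s^2=t^2$ via a ``squeezing between consecutive squares'' argument, and finally reduce the diagonal case $s^2=t^2$ to a single hyperelliptic equation whose rational points were already determined in Lemma \ref{lem:case2.9}. Rewriting the system \eqref{eq:case2.10} as
\[
x^2 = s^4 - s^2 + t^2 + 1 \qquad\text{and}\qquad y^2 = t^4 + t^2 - s^2 + 1
\]
makes both the squeezing estimates and the substitution at the end transparent.

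First I would treat $s=0$: the left equation becomes $x^2 - t^2 = 1$, so $(x-t)(x+t)=1$ forces $t=0$ and $x=\pm1$, and then the right equation gives $y^2=1$, producing $(x,y,s,t)=(\pm1,\pm1,0,0)$. Next, for $t=0$ the right equation reads $y^2+s^2=1$, so $(y,s)\in\{(\pm1,0),(0,\pm1)\}$; the first possibility recovers $(\pm1,\pm1,0,0)$, while the second gives $s^2=1$ and hence $x^2=s^4-s^2+1=1$, yielding the new family $(x,y,s,t)=(\pm1,0,\pm1,0)$. Thus both listed families arise exactly from these boundary cases.

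From here I would assume $s$ and $t$ are both nonzero and aim to show $s^2=t^2$. Suppose $s^2>t^2$; since $s^2-t^2=1$ would force $t=0$ by factoring $(s-t)(s+t)=1$, we in fact have $s^2\geq t^2+2$, whence
\[
(s^2-1)^2 = s^4 - 2s^2 + 1 < s^4 - s^2 + t^2 + 1 < s^4 = (s^2)^2,
\]
so the left-hand value lies strictly between consecutive squares, contradicting the left equation. Symmetrically, if $t^2>s^2$ then $t^2-s^2\geq1$ gives
\[
(t^2)^2 = t^4 < t^4 + t^2 - s^2 + 1 < t^4 + 2t^2 + 1 = (t^2+1)^2,
\]
contradicting the right equation. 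Hence $s^2=t^2$.

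The final step is the main obstacle, since it is the only point where elementary congruence or interval arguments do not suffice. Setting $t^2=s^2$ in either equation collapses the system to $x^2=s^4+1$ (and likewise $y^2=s^4+1$). This is precisely the hyperelliptic quartic analyzed in Lemma \ref{lem:case2.9}, which is birational over $\mathbb{Q}$ to $E:Y^2=X^3-4X$ with $E(\mathbb{Q})\cong\mathbb{Z}/2\mathbb{Z}\times\mathbb{Z}/2\mathbb{Z}$, so that its only affine integral points are $(s,x)=(0,\pm1)$. As we are in the case $s\neq0$, this yields no further solutions, and I would conclude that the complete solution set is $(x,y,s,t)=(\pm1,\pm1,0,0)$ and $(\pm1,0,\pm1,0)$, as claimed.
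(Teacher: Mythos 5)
Your proposal is correct and follows essentially the same route as the paper's own proof: dispose of $s=0$ and $t=0$ by factoring, squeeze $s^4-s^2+t^2+1$ (resp. $t^4+t^2-s^2+1$) between consecutive squares to force $s^2=t^2$, and then invoke the determination of the integral points on $x^2=s^4+1$ from Lemma \ref{lem:case2.9}. The only cosmetic difference is that you write out the squeezing inequalities explicitly where the paper refers back to the arguments of Lemmas \ref{lem:case2.9} and \ref{lem:case2.2}.
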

\begin{proof}
If $s=0$, then $(x-t)(x+t)=1$ and thus $(x,t)=(\pm{1},0)$. In particular, we obtain the solutions $(x,y,s,t)=(\pm{1},\pm{1},0,0)$ listed above. On the other hand, if $t=0$ and $s\neq0$, then $y^2+s^2=1$ implies that $(y,s)=(0,\pm{1})$. In particular, we obtain the solutions $(x,y,s,t)=(\pm{1},0,\pm{1},0)$ listed above. Therefore, we may assume that neither $s$ nor $t$ is zero. On the other hand, if $s^2>t^2\geq1$, then the same argument given in the proof of Lemma \ref{lem:case2.9} implies that $s^4-s^2+t^2+1$ cannot be an integral square, contradicting the left side of \eqref{eq:case2.10}. Likewise, the proof of Lemma \ref{lem:case2.2} implies that $t^2>s^2\geq1$ is impossible: otherwise, $t^4+t^2+1-s^2$ cannot be an integral square, contradicting the right side of \eqref{eq:case2.10}. Hence, we may assume that $s^2=t^2$ and $s$ is not zero. But in this case $x^2=s^4+1$, and we determined in the proof of Lemma \ref{lem:case2.9} that $(s,x)=(0,\pm{1})$ are the only affine integral (in fact rational) points on this hyperelliptic curve. Therefore, we rediscover the solutions $(x,y,s,t)=(\pm{1},\pm{1},0,0)$. Moreover, we deduce that our list above contains all integral solutions to \eqref{eq:case2.10} as claimed.           
\end{proof} 

\begin{lemma}\label{lem:case2.11} Let $x,y,s,t\in\mathbb{Z}$ be such that  
\begin{equation}\label{eq:case2.11}
x^2+s^2-s^4=1+t^2\qquad\text{and}\qquad y^2-1-t^2-t^4=1-s^2.
\end{equation} 
Then $(x,y,s,t)=(\pm{1},\pm{1},\pm{1},0)$.  
\end{lemma}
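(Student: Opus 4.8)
The plan is to follow the template established throughout this subsection: dispose of the degenerate cases $s=0$ and $t=0$ by hand, then assume $s,t\neq0$ and use Runge-type squeezing between consecutive integer squares to force $s^2=t^2$, and finally eliminate the diagonal by a congruence. To set up, I would rewrite \eqref{eq:case2.11} as $x^2=s^4-s^2+1+t^2$ and $y^2=t^4+t^2+2-s^2$. If $s=0$, the first equation gives $x^2-t^2=1$, hence $(x,t)=(\pm1,0)$; substituting $t=0$ into the second equation yields $y^2=2$, which is impossible, so there are no solutions with $s=0$. If instead $t=0$ (the subcase $s=t=0$ being covered above), the second equation becomes $y^2=2-s^2$, forcing $s=\pm1$; then $y=\pm1$ and the first equation gives $x^2=1$, producing exactly the claimed solutions $(x,y,s,t)=(\pm1,\pm1,\pm1,0)$.

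With $s,t\neq0$, I would argue by comparing $s^2$ and $t^2$. Suppose $s^2>t^2\geq1$. The boundary $s^2-t^2=1$ cannot occur, since $(s-t)(s+t)=1$ forces $t=0$; hence $s^2\geq t^2+2$, and
\[
(s^2-1)^2=s^4-2s^2+1<s^4-s^2+1+t^2<s^4=(s^2)^2,
\]
the left inequality being $s^2+t^2>0$ and the right being $s^2-1-t^2\geq1>0$. Thus $x^2$ is trapped strictly between consecutive squares, contradicting the first equation. Symmetrically, if $t^2>s^2\geq1$, I would squeeze the right-hand side:
\[
(t^2)^2=t^4<t^4+t^2+2-s^2<t^4+2t^2+1=(t^2+1)^2,
\]
using $t^2+2-s^2\geq3>0$ and $t^2-1+s^2\geq2>0$; this traps $y^2$ between consecutive squares and contradicts the second equation.

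It remains to treat $s^2=t^2$ with $s\neq0$. Substituting $t^2=s^2$ into the second equation gives $y^2=t^4+2$, which has no solutions modulo $4$ since $t^4\equiv0,1$ forces $y^2\equiv2,3\pmod4$; alternatively the first equation becomes $x^2=s^4+1$, whose only integral points are $(s,x)=(0,\pm1)$ by Lemma \ref{lem:case2.9}, again contradicting $s\neq0$. Hence the diagonal contributes nothing, and the solutions found in the $t=0$ case are complete. The only place requiring input beyond elementary algebra is this diagonal, but here it collapses to a one-line congruence rather than the elliptic-curve computations needed in the harder lemmas of this section, so I expect no serious obstacle.
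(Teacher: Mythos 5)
Your proof is correct and follows essentially the same route as the paper: handle $s=0$ and $t=0$ by hand, squeeze $x^2$ between $(s^2-1)^2$ and $(s^2)^2$ when $s^2>t^2$ and $y^2$ between $(t^2)^2$ and $(t^2+1)^2$ when $t^2>s^2$, then kill the diagonal $s^2=t^2$. The one small difference is that on the diagonal you use the congruence $y^2=t^4+2\not\equiv\square\pmod 4$ rather than the paper's appeal to the rational points on $x^2=s^4+1$; this is slightly more elementary and equally valid (and you note the curve argument as a backup), so there is nothing to fix.
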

\begin{proof} If $s=0$, then $(x-t)(x+t)=1$ and thus $(x,t)=(\pm{1},0)$. However, this implies that $y^2=2$, a contradiction. Therefore, there are no integral solutions with $s=0$. On the other hand, if $t=0$, then $y^2+s^2=2$. Hence, $(s,y)=(\pm{1},\pm{1})$ and we obtain the total solutions $(x,y,s,t)=(\pm{1},\pm{1},\pm{1},0)$ listed above. In particular, we may assume that neither $s$ nor $t$ is zero. On the other hand, if $s^2>t^2\geq1$, then the same argument given in the proof of Lemma \ref{lem:case2.9} implies that $s^4-s^2+t^2+1$ cannot be an integral square, contradicting the left side of \eqref{eq:case2.11}. Likewise, the proof of Lemma \ref{lem:case2.3} implies that $t^2>s^2\geq1$ is impossible: otherwise, $t^4+t^2+2-s^2$ cannot be an integral square, contradicting the right side of \eqref{eq:case2.11}. Hence, we may assume that $s^2=t^2$ and $s\neq0$. But in this case $x^2=s^4+1$, and we determined in the proof of Lemma \ref{lem:case2.9} that $(s,x)=(0,\pm{1})$ are the only affine integral (in fact rational) points on this hyperelliptic curve. Therefore, there are no such integral solutions $(x,y,s,t)$ in this case, and we deduce that our list above is exhaustive as claimed.     
\end{proof} 

\begin{lemma}\label{lem:case2.12} Let $x,y,s,t\in\mathbb{Z}$ be such that  
\begin{equation}\label{eq:case2.12}
x^2+s^2-s^4=1+t^2\qquad\text{and}\qquad y^2-1-t^2-t^4=-(1-s^2).
\end{equation} 
Then $(x,y,s,t)=(\pm{1},0,0,0)$ or $(\pm{1},\pm{1},\pm{1},0)$.  
\end{lemma}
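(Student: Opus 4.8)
The plan is to follow the template of the preceding lemmas in this subsection, rewriting the system as $x^2 = s^4 - s^2 + 1 + t^2$ and $y^2 = t^4 + t^2 + s^2$, and to begin by disposing of the two degenerate cases. If $s=0$, the left equation becomes $x^2 - t^2 = 1$, so $(x-t)(x+t)=1$ forces $(x,t)=(\pm1,0)$; substituting $t=0$ into the right equation gives $y^2=0$, so we recover $(x,y,s,t)=(\pm1,0,0,0)$. If $t=0$, the right equation gives $y=\pm s$ while the left becomes $x^2 = s^4 - s^2 + 1$; for $|s|\geq2$ one checks that $(s^2-1)^2 < s^4-s^2+1 < (s^2)^2$, so no square arises, leaving $|s|\leq1$ and hence the solutions $(\pm1,\pm1,\pm1,0)$ (alongside the previously found $(\pm1,0,0,0)$).

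Next I would assume $s,t\neq0$ and split into three subcases according to the sign of $s^2-t^2$. When $s^2>t^2\geq1$, the argument of Lemma \ref{lem:case2.9} transfers directly: ruling out $s^2-t^2=1$ (which by factoring $(s-t)(s+t)=1$ forces $t=0$) gives $s^2-t^2\geq2$, whence $(s^2-1)^2 < s^4-s^2+t^2+1 < (s^2)^2$ contradicts the left equation. When $t^2>s^2\geq1$, the sandwich from Lemma \ref{lem:case2.4} applies to the right equation, namely $(t^2)^2 < t^4+t^2+s^2 < (t^2+1)^2$ using $s^2\leq t^2$, which is again impossible. This leaves only the subcase $s^2=t^2$ with $s\neq0$, in which case the left equation collapses to $x^2 = s^4+1$.

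The single genuinely arithmetic input, and the only real (and mild) obstacle, is to show that $x^2=s^4+1$ has no integral solution with $s\neq0$. But this is exactly the hyperelliptic quartic already resolved in the proof of Lemma \ref{lem:case2.9}, where it is shown to be birational over $\mathbb{Q}$ to $E:Y^2=X^3-4X$ with $E(\mathbb{Q})\cong\mathbb{Z}/2\mathbb{Z}\times\mathbb{Z}/2\mathbb{Z}$, forcing $(s,x)=(0,\pm1)$. Since $s\neq0$ here, the subcase $s^2=t^2$ is empty, and hence no solution survives with both $s,t$ nonzero. Assembling the degenerate cases then produces precisely $(\pm1,0,0,0)$ and $(\pm1,\pm1,\pm1,0)$, as claimed. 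I expect essentially all of the work to be a reassembly of the three bounding arguments and the one elliptic-curve fact already established, so the proof should be short and largely self-contained within the subsection.
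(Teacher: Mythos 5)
Your proof is correct and follows essentially the same route as the paper's: the same decomposition into the subcases $s=0$, $t=0$, $s^2>t^2$, $t^2>s^2$, $s^2=t^2$, the same sandwich bounds imported from Lemmas \ref{lem:case2.9} and \ref{lem:case2.4}, and the same reduction of the $s^2=t^2$ subcase to the curve $x^2=s^4+1$ already resolved in Lemma \ref{lem:case2.9}. The only divergence is in the $t=0$ subcase, where you replace the paper's elliptic-curve computation for $x^2=s^4-s^2+1$ (birational to $Y^2=X^3+2X^2-3X$ with Mordell--Weil group $\mathbb{Z}/2\mathbb{Z}\times\mathbb{Z}/4\mathbb{Z}$) by the elementary observation that $(s^2-1)^2<s^4-s^2+1<(s^2)^2$ for $|s|\geq2$; this is a valid and slightly more self-contained alternative.
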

\begin{proof}  If $s=0$, then $(x-t)(x+t)=1$ and thus $(x,t)=(\pm{1},0)$. However, this implies that $y=0$ as well, so we obtain the solutions $(x,y,s,t)=(\pm{1},0,0,0)$ listed above. On the other hand, if $t=0$, then $x^2=s^4-s^2+1$. However, the hyperlliptic curve $C$ with affine model $x^2=s^4-s^2+1$ is birational over $\mathbb{Q}$ to the elliptic curve $E: Y^2 = X^3 + 2X^2 - 3X$. Moreover, we compute with Magma that $E(\mathbb{Q})\cong\mathbb{Z}/2\mathbb{Z}\times\mathbb{Z}/4\mathbb{Z}$. In particular, $\#C(\mathbb{Q})=8$ and (after excluding the two rational points at infinity) we deduce that $(s,x)\in\{(0,\pm{1}),(\pm{1},\pm{1})\}$. Now then, since we assume $s\neq 0$ as well, we obtain the solutions $(x,y,s,t)=(\pm{1},\pm{1},\pm{1},0)$ listed above. From here, we may assume that neither $s$ nor $t$ is zero. On the other hand, if $s^2>t^2\geq1$, then the same argument given in the proof of Lemma \ref{lem:case2.9} implies that $s^4-s^2+t^2+1$ cannot be an integral square, contradicting the left side of \eqref{eq:case2.12}. Likewise, the proof of Lemma \ref{lem:case2.4} implies that $t^2>s^2\geq1$ is impossible: otherwise, $t^4+t^2+s^2$ cannot be an integral square, contradicting the right side of \eqref{eq:case2.12}. Hence, we may assume that $s^2=t^2$ and that $s\neq0$.  But in this case $x^2=s^4+1$, and we determined in the proof of Lemma \ref{lem:case2.9} that $(s,x)=(0,\pm{1})$ are the only affine integral (in fact rational) points on this hyperelliptic curve. Therefore, there are no such integral solutions $(x,y,s,t)$ in this case, and we deduce that our list above is exhaustive as claimed.     
\end{proof} 

\begin{lemma}\label{lem:case2.13} There are no simultaneous integral solutions $x,y,s,t\in\mathbb{Z}$ to the equations
\begin{equation}\label{eq:case2.13}
x^2+s^2-s^4=-1-t^2\qquad\text{and}\qquad y^2-1-t^2-t^4=s^2.
\end{equation}
\end{lemma}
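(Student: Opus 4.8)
The plan is to show that the \emph{first} equation in \eqref{eq:case2.13} already has no integral solutions modulo $4$, so that the second equation is never needed. First I would rearrange the left-hand equation as $x^2 + t^2 = s^4 - s^2 - 1$. The key arithmetic input is that $s^4 - s^2 = s^2(s^2-1)$ is always divisible by $4$ (indeed by $8$): when $s$ is even this is immediate from the factor $s^2$, and when $s$ is odd the factor $(s-1)(s+1)$ is a product of two consecutive even integers. Hence $s^4 - s^2 \equiv 0 \pmod 4$, and the $s$-dependence drops out of the congruence entirely.

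Reducing modulo $4$, the equation $x^2 + t^2 = s^4 - s^2 - 1$ becomes $x^2 + t^2 \equiv -1 \equiv 3 \pmod 4$. Since every square is congruent to $0$ or $1$ modulo $4$, a sum of two squares can only be $0$, $1$, or $2$ modulo $4$, and never $3$. This contradiction shows that $x^2 + s^2 - s^4 = -1 - t^2$ is insoluble in integers, and therefore no $(x,y,s,t)\in\mathbb{Z}^4$ can satisfy the full system \eqref{eq:case2.13}, as claimed.

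There is no real obstacle in this case: unlike several of the mixed cases above, one does not need Runge's ``trapped between consecutive squares'' estimates or any elliptic-curve computation, because a single congruence obstruction in the first equation eliminates all solutions outright. The only point worth verifying carefully is the divisibility $4 \mid s^4 - s^2$, which is precisely what makes the congruence independent of $s$ and forces the impossible value $3 \pmod 4$.
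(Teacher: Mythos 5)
Your proof is correct and takes the same route as the paper, whose entire proof is the observation that the system has no solutions modulo $4$; you simply make the congruence obstruction explicit and note that the first equation alone suffices, since $x^2+t^2\equiv s^4-s^2-1\equiv 3\pmod 4$ is impossible for a sum of two squares. One small slip: $s^4-s^2$ need not be divisible by $8$ when $s$ is even (e.g.\ $s=2$ gives $12$), but divisibility by $4$ is all your argument uses, so the proof stands.
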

\begin{proof} There are no solutions $x,y,s,t\in\mathbb{Z}/4\mathbb{Z}$. 
\end{proof}  

\begin{lemma}\label{lem:case2.14} There are no simultaneous integral solutions $x,y,s,t\in\mathbb{Z}$ to the equations
\begin{equation}\label{eq:case2.14}
x^2+s^2-s^4=-1-t^2\qquad\text{and}\qquad y^2-1-t^2-t^4=-s^2.
\end{equation}
\end{lemma}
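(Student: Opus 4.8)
The plan is to imitate the proof of Lemma \ref{lem:case2.13} verbatim, since the first equation in \eqref{eq:case2.14} is \emph{identical} to the first equation in \eqref{eq:case2.13}. Accordingly, I expect this lemma to reduce to a single congruence obstruction modulo $4$, with no appeal to the second equation required at all.

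First I would reduce the equation $x^2+s^2-s^4=-1-t^2$ modulo $4$. The key elementary fact is that $s^2-s^4=s^2(1-s^2)\equiv 0\pmod 4$ for every integer $s$: if $s$ is even then $4\mid s^2$, while if $s$ is odd then $1-s^2=(1-s)(1+s)$ is a product of two consecutive even integers and hence divisible by $4$. Substituting this into the first equation, it collapses to $x^2+t^2\equiv -1\equiv 3\pmod 4$.

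Since every integer square is congruent to $0$ or $1$ modulo $4$, we have $x^2+t^2\in\{0,1,2\}\pmod 4$, so the congruence $x^2+t^2\equiv 3\pmod 4$ is impossible. This already shows that the first equation of \eqref{eq:case2.14} admits no integral solutions, whence the full system has none. I would simply record that there are no solutions in $(\mathbb{Z}/4\mathbb{Z})^4$, matching the terse treatment in the companion Lemma \ref{lem:case2.13}.

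There is essentially no obstacle here; the only thing to verify is the congruence $s^2-s^4\equiv 0\pmod 4$, after which the contradiction is immediate. The one point worth emphasizing is that the second equation plays no role whatsoever, so this case is genuinely settled by the first defining equation alone, exactly as in Lemma \ref{lem:case2.13}.
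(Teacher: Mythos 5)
Your proof is correct and takes the same route as the paper, which simply asserts that the system has no solutions modulo $4$; your reduction $s^2-s^4\equiv 0\pmod 4$ and the impossibility of $x^2+t^2\equiv 3\pmod 4$ is exactly the verification behind that assertion. Your added observation that the first equation alone already has no solutions modulo $4$ (so the second equation is never needed) is accurate.
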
 
\begin{proof} There are no solutions $x,y,s,t\in\mathbb{Z}/4\mathbb{Z}$. 
\end{proof}  

\begin{lemma}\label{lem:case2.15} There are no simultaneous integral solutions $x,y,s,t\in\mathbb{Z}$ to the equations
\begin{equation}\label{eq:case2.15}
x^2+s^2-s^4=-1-t^2\qquad\text{and}\qquad y^2-1-t^2-t^4=1-s^2.
\end{equation}
\end{lemma}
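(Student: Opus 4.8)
The plan is to derive a contradiction modulo $4$ using only the first equation; the second equation will play no role. This mirrors the strategy already used in Lemmas \ref{lem:case2.13} and \ref{lem:case2.14}, which share exactly this same first equation $x^2+s^2-s^4=-1-t^2$, and so the obstruction should be identical.

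First I would rearrange the left-hand equation into the form
\[
x^2 + t^2 = s^4 - s^2 - 1.
\]
The key observation is that $s^2 \equiv s^4 \pmod 4$ for every integer $s$: indeed $s^2 \equiv 0$ or $1 \pmod 4$, and squaring fixes each of these residues, so $s^4 = (s^2)^2 \equiv s^2$. Hence $s^4 - s^2 \equiv 0 \pmod 4$, and the right-hand side satisfies $s^4 - s^2 - 1 \equiv 3 \pmod 4$, independently of $s$.

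On the other hand, the left-hand side $x^2 + t^2$ is a sum of two squares. Since each square is congruent to $0$ or $1$ modulo $4$, such a sum lies in $\{0,1,2\} \pmod 4$ and can never be congruent to $3$. This contradiction shows the first equation alone has no solutions in $\mathbb{Z}/4\mathbb{Z}$, hence none in $\mathbb{Z}$; consequently the full system \eqref{eq:case2.15} has no integral solutions.

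I do not expect a genuine obstacle here, since the entire difficulty in lemmas of this type is selecting the right modulus, and $4$ suffices precisely because the offending side is a sum of two squares while the opposite side collapses to the fixed residue $3$. The only point requiring care is verifying the congruence $s^2 \equiv s^4 \pmod 4$, which is what makes the $s$-dependence of the right-hand side disappear.
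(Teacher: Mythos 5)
Your proof is correct and matches the paper's approach exactly: the paper's proof of this lemma is simply the assertion that there are no solutions in $\mathbb{Z}/4\mathbb{Z}$, and your argument makes that reduction explicit, correctly observing that $x^2+t^2=s^4-s^2-1\equiv 3\pmod 4$ is impossible for a sum of two squares. The only added value in your write-up is noting that the first equation alone suffices, which the paper leaves implicit.
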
 
\begin{proof} There are no solutions $x,y,s,t\in\mathbb{Z}/4\mathbb{Z}$. 
\end{proof}  

\begin{lemma}\label{lem:case2.16} There are no simultaneous integral solutions $x,y,s,t\in\mathbb{Z}$ to the equations
\begin{equation}\label{eq:case2.16}
x^2+s^2-s^4=-1-t^2\qquad\text{and}\qquad y^2-1-t^2-t^4=-(1-s^2).
\end{equation}
\end{lemma}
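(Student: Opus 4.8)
The plan is to reduce the system modulo $4$ and extract a contradiction from the first equation alone; the second equation will turn out to be irrelevant, which explains why this lemma---like the three preceding it---admits such a short proof. First I would record the elementary fact that $s^2 - s^4 \equiv 0 \pmod 4$ for every $s \in \mathbb{Z}$: since every square is congruent to $0$ or $1$ modulo $4$, we have $s^4 = (s^2)^2 \equiv s^2 \pmod 4$, so the difference vanishes. Consequently the left-hand side of the first equation satisfies $x^2 + s^2 - s^4 \equiv x^2 \pmod 4$.

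Next I would reduce the first equation $x^2 + s^2 - s^4 = -1 - t^2$ modulo $4$, obtaining $x^2 \equiv -1 - t^2 \pmod 4$, or equivalently $x^2 + t^2 \equiv 3 \pmod 4$. Because each of $x^2$ and $t^2$ is congruent to $0$ or $1$ modulo $4$, their sum lies in $\{0,1,2\}$ modulo $4$ and can never be $3$. This already contradicts the existence of any integral solution, so no $(x,y,s,t) \in \mathbb{Z}^4$ satisfying \eqref{eq:case2.16} can exist.

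There is no genuine obstacle in this argument. The only point requiring verification is the congruence $s^2 - s^4 \equiv 0 \pmod 4$, which one confirms either by the one-line case analysis above or, exactly as the authors do for Lemmas~\ref{lem:case2.13}--\ref{lem:case2.15}, by checking directly that the first equation has no solutions among the residues in $\mathbb{Z}/4\mathbb{Z}$. In particular, the variable $y$ and the second equation play no role whatsoever, so a brute-force search over $(x,s,t)\in(\mathbb{Z}/4\mathbb{Z})^3$ suffices to close the case.
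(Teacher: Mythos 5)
Your proof is correct and takes the same route as the paper, which simply asserts that the first equation has no solutions modulo $4$; you have made the residue check explicit by observing $s^2-s^4\equiv 0\pmod 4$ and $x^2+t^2\equiv 3\pmod 4$ is impossible. Nothing further is needed.
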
 
\begin{proof} There are no solutions $x,y,s,t\in\mathbb{Z}/4\mathbb{Z}$. 
\end{proof}  

\subsection{Two square 2-cycles} In this subsection, we assume that both $\phi_1$ and $\phi_2$ have $2$-cycles that contain a square. Hence, $c_1=-1-s^2-s^4$ and $c_2=-1-t^2-t^4$ for some $s,t\in\mathbb{Z}$. Moreover, we assume that both $\phi_1(\mathbb{Z})\cap\PrePer(\phi_2,\mathbb{Q})$ and $\phi_2(\mathbb{Z})\cap\PrePer(\phi_1,\mathbb{Q})$ are non-empty sets. Therefore, Proposition \ref{prop:portraits} implies that 
\[x^2-1-s^2-s^4=\pm{t^2},\; \pm{(t^2+1)} \;\;\;\text{and}\;\;\; y^2-1-t^2-t^4=\pm{s^2},\; \pm{(s^2+1)}\]
for some $x,y\in\mathbb{Z}$. From here, we handle each of these $16$ cases (corresponding to a pair of preperiodic points from $\phi_1$ and $\phi_2$) in subsequent lemmas.

\begin{lemma}\label{lem:case3.1} Let $x,y,s,t\in\mathbb{Z}$ be such that  
\begin{equation}\label{eq:case3.1}
x^2-1-s^2-s^4=t^2\qquad\text{and}\qquad y^2-1-t^2-t^4=s^2.
\end{equation} 
Then $(x,y,s,t)=(\pm({u}^2+1),\pm{(u^2+1)},\pm{u},\pm{u})$ for some $u\in\mathbb{Z}$. 
\end{lemma}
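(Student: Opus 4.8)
The plan is to exploit the almost-perfect-square shape of the two right-hand sides together with the symmetry of the system. Rewriting the equations as
\[ x^2 = s^4 + s^2 + t^2 + 1 \qquad\text{and}\qquad y^2 = t^4 + t^2 + s^2 + 1, \]
I note that the system is invariant under the substitution $(x,y,s,t)\mapsto(y,x,t,s)$, so any conclusion drawn from the first equation about the pair $(s,t)$ has a mirror version obtained from the second. This symmetry is exactly what will let me play $s^2$ and $t^2$ off against each other, and I expect no curve-solving, Runge's method, or Mordell--Weil computation to be necessary here, in contrast to several of the neighboring cases.

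First I would pin down $s^2$ and $t^2$ by a sandwiching argument applied to each equation separately. From the first equation, $x^2 = s^4 + (s^2 + t^2 + 1) > (s^2)^2$, since the parenthetical term is at least $1$. Because $x$ and $s^2$ are integers, this strict inequality forces $|x| \geq s^2 + 1$, hence $x^2 \geq (s^2+1)^2 = s^4 + 2s^2 + 1$. Comparing with the equation then yields $s^2 + t^2 + 1 \geq 2s^2 + 1$, that is, $t^2 \geq s^2$. Applying the identical reasoning to the second equation (this is precisely the mirror statement under the symmetry above) gives the reverse inequality $s^2 \geq t^2$. Combining the two forces $s^2 = t^2$.

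With $s^2 = t^2$ in hand, I set $u := \lvert s\rvert = \lvert t\rvert$, so that $s = \pm u$ and $t = \pm u$ with independent signs. Substituting back, the first equation collapses to $x^2 = u^4 + u^2 + u^2 + 1 = (u^2+1)^2$, whence $x = \pm(u^2+1)$, and the second likewise gives $y = \pm(u^2+1)$. Collecting signs produces exactly the claimed solution set $(x,y,s,t) = (\pm(u^2+1),\pm(u^2+1),\pm u,\pm u)$ for $u\in\mathbb{Z}$; the degenerate case $s=t=0$ is simply $u=0$, giving $(\pm1,\pm1,0,0)$.

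The main obstacle is really just the initial recognition that a lower bound suffices: because each right-hand side exceeds the perfect square $(s^2)^2$ (respectively $(t^2)^2$) by the strictly positive amount $s^2+t^2+1\geq 1$, integrality alone pushes $x^2$ up to the next square $(s^2+1)^2$, and this single inequality applied to both equations already closes the problem. I would be careful about one bookkeeping point: I never need to assume $s$ or $t$ is nonzero, since the strict inequality $x^2 > (s^2)^2$ holds unconditionally thanks to the constant term $1$. Thus the argument runs uniformly, and the cases $s=0$ or $t=0$ require no separate treatment but simply reappear as the $u=0$ instance.
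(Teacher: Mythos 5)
Your proof is correct and takes essentially the same approach as the paper: both arguments squeeze $s^4+s^2+t^2+1$ between the consecutive squares $(s^2)^2$ and $(s^2+1)^2$ and invoke the $s\leftrightarrow t$ symmetry to conclude $s^2=t^2$, after which $x^2=(s^2+1)^2$ finishes the case. Your packaging (deriving $t^2\geq s^2$ directly from $|x|\geq s^2+1$ rather than assuming $s^2>t^2$ for contradiction) is a minor, equivalent rephrasing.
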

\begin{proof} 
Suppose that $s^2>t^2$. Then 
\begin{equation*}
\begin{split}  
(s^2+1)^2&=s^4+s^2+s^2+1\\[5pt]
&>s^4+s^2+1+t^2\\[5pt]
&>s^4=(s^2)^2.
\end{split} 
\end{equation*}
Therefore, $s^4+s^2+1+t^2$ is strictly between two consecutive integer squares and so cannot be a square itself. But this contradicts the left side of \eqref{eq:case3.1}. On the other hand, this problem is symmetric in $s$ and $t$, so that $t^2>s^2$ is also impossible. Therefore, $s^2=t^2$. However, in this case $x^2=(s^2+1)^2$ and $y^2=(t^2+1)^2$ follows from \eqref{eq:case3.1}. Therefore, $(x,y,s,t)=(\pm({u}^2+1),\pm{(u^2+1)},\pm{u},\pm{u})$ for some $u\in\mathbb{Z}$ as claimed.        
\end{proof} 

\begin{lemma}\label{lem:case3.2} Let $x,y,s,t\in\mathbb{Z}$ be such that  
\begin{equation}\label{eq:case3.2}
x^2-1-s^2-s^4=t^2\qquad\text{and}\qquad y^2-1-t^2-t^4=-s^2.
\end{equation} 
Then $(x,y,s,t)=(\pm{1},\pm{1},0,0)$.  
\end{lemma}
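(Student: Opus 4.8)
The plan is to follow the template of the preceding lemmas in this subsection: dispose of the degenerate cases $s=0$ and $t=0$ by hand, and then, when both $s$ and $t$ are nonzero, trap one side of \eqref{eq:case3.2} strictly between two consecutive squares.

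First I would set $s=0$. The left equation of \eqref{eq:case3.2} becomes $x^2=t^2+1$, so $(x-t)(x+t)=1$ forces $(x,t)=(\pm 1,0)$; substituting $t=0$ into the right equation gives $y^2=1$, hence $y=\pm 1$. This already produces the claimed solutions $(x,y,s,t)=(\pm 1,\pm 1,0,0)$.

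Next I would take $t=0$ with $s\neq 0$. Here the right equation reads $y^2=1-s^2$, which is nonpositive for $s\neq 0$, forcing $s^2=1$ and $y=0$; but then the left equation yields $x^2=s^4+s^2+1=3$, which is not a perfect square. Thus this case contributes nothing.

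Finally, assume $s,t\neq 0$ and split on the comparison of $s^2$ and $t^2$. If $s^2>t^2$, I would squeeze the left equation: the bound $(s^2)^2<s^4+s^2+1+t^2$ holds because $s^2+1+t^2>0$, while $s^2>t^2$ gives $s^4+s^2+1+t^2<(s^2+1)^2$, so $x^2$ lies strictly between consecutive squares, which is impossible. If instead $s^2\leq t^2$, I would squeeze the right equation: $t^4+t^2+1-s^2<(t^2+1)^2$ holds automatically (as $-s^2<t^2$ for $t\neq 0$), and $s^2\leq t^2$ yields $(t^2)^2<t^4+t^2+1-s^2$, so $y^2$ is trapped strictly between consecutive squares, again impossible. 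Since any nonzero pair $(s,t)$ satisfies exactly one of $s^2>t^2$ or $s^2\leq t^2$, these two squeezes leave no gap, so no solutions arise beyond those already found. The only point I would verify with care --- and the closest thing here to an obstacle --- is precisely this matching of the dichotomy $s^2>t^2$ versus $s^2\leq t^2$ to ``the left equation fails'' versus ``the right equation fails''; pleasantly, unlike several neighboring lemmas, this case requires no Magma computation or elliptic-curve analysis at all.
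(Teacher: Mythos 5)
Your proof is correct, and the main engine --- trapping one side of \eqref{eq:case3.2} strictly between consecutive squares --- is the same as the paper's. The genuine difference is in the endgame. The paper splits into $s^2>t^2$, $t^2>s^2$, and $s^2=t^2$, and in the last case it is left with $y^2=t^4+1$, which it resolves by passing to the elliptic curve $Y^2=X^3-4X$ and a Magma computation of its Mordell--Weil group. You instead observe that the squeeze on the right-hand equation, $(t^2)^2<t^4+t^2+1-s^2<(t^2+1)^2$, is valid under the weaker hypothesis $s^2\leq t^2$ (the lower bound needs only $s^2<t^2+1$, the upper only $t\neq 0$), so the diagonal case $s^2=t^2$ is absorbed and no curve computation is needed at all. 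This is a real simplification for the lemma at hand, and your separate hand-checks of $s=0$ and $t=0$ are correct (indeed the paper's squeezes already cover them implicitly). The one thing the paper's longer route buys is the determination of \emph{all rational} points on $y^2=t^4+1$, a fact it then reuses in Lemmas \ref{lem:case3.6}, \ref{lem:case3.7}, \ref{lem:case3.8}, and \ref{lem:case3.14}; your elementary argument only rules out nonzero \emph{integral} $t$, which suffices here but would not be citable in those later proofs without a similar squeeze being redone there (which, as it happens, also works).
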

\begin{proof} 
Suppose for a contradiction that $s^2>t^2$. Then the same argument given in the proof of Lemma \ref{lem:case3.1} implies that $s^4+s^2+1+t^2$ cannot be an integral square, contradicting the left side of \eqref{eq:case3.2}. Likewise, if $t^2>s^2$ then 
\begin{equation*}
\begin{split}  
(t^2+1)^2&=t^4+t^2+t^2+1\\[5pt]
&>t^4+t^2-s^2+1\\[5pt]
&=t^4+(t^2-s^2)+1\\[5pt] 
&>t^4=(t^2)^2.
\end{split} 
\end{equation*}
Hence, $t^4+t^2-s^2+1$ cannot be an integral square, contradicting the right side of \eqref{eq:case3.2}. In particular, we may assume that $s^2=t^2$. But in this case, $y^2=t^4+1$. However, the hyperelliptic curve B with affine model $y^2=t^4+1$ is birational over $\mathbb{Q}$ to $E: Y^2 = X^3 - 4X$. Moreover, we compute with Magma that $E(\mathbb{Q})\cong\mathbb{Z}/2\mathbb{Z}\times\mathbb{Z}/2\mathbb{Z}$. In particular $\#C(\mathbb{Q})=4$, and hence (excluding the two rational points at infinity) we deduce that $(t,y)=(0,\pm{1})$. Therefore, $t=0=s$ and $x=\pm{1}=y$. In particular, the solutions $(x,y,s,t)=(\pm{1},\pm{1},0,0)$ are the only integral solutions to \eqref{eq:case3.2} as claimed.        
\end{proof} 

\begin{lemma}\label{lem:case3.3} There are no simultaneous integral solutions $x,y,s,t\in\mathbb{Z}$ to the equations
\begin{equation}\label{eq:case3.3}
x^2-1-s^2-s^4=t^2\qquad\text{and}\qquad y^2-1-t^2-t^4=1+s^2.
\end{equation} 
\end{lemma}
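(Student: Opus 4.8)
The plan is to rewrite the system in the more transparent form
$x^2 = s^4 + s^2 + t^2 + 1$ and $y^2 = t^4 + t^2 + s^2 + 2$, and then trap one of these two quantities strictly between consecutive integer squares, exactly as in the proofs of Lemmas \ref{lem:case3.1} and \ref{lem:case3.2}. It is worth noting at the outset that, unlike the earlier ``no solutions'' lemmas, reduction modulo $4$ does not settle this case by itself: the congruences rule out every parity combination except $s$ and $t$ both odd. One clean way to dispatch that residual case is to pass to modulus $8$, where the second equation forces $y^2 \equiv 5 \pmod 8$, which is impossible; below I instead give a uniform squeezing argument consistent with the rest of this subsection.

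First I would split on the comparison of $s^2$ and $t^2$. If $s^2 > t^2$, then $s \neq 0$ and the chain $s^4 < s^4 + s^2 + t^2 + 1 < s^4 + 2s^2 + 1 = (s^2+1)^2$ (the right inequality being $s^2 + t^2 + 1 < 2s^2 + 1$) shows that $x^2$ lies strictly between the consecutive squares $(s^2)^2$ and $(s^2+1)^2$, contradicting the first equation. If instead $t^2 \geq s^2 + 2$, then $t \neq 0$ and $s^2 + 2 < t^2 + 1$, so $t^4 < t^4 + t^2 + s^2 + 2 < t^4 + 2t^2 + 1 = (t^2+1)^2$ traps $y^2$ strictly between $(t^2)^2$ and $(t^2+1)^2$, again impossible.

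Two transitional cases remain, in which neither squeezing inequality is immediately available. If $t^2 = s^2 + 1$, then $(t-s)(t+s) = 1$ forces $t - s = t + s = \pm 1$, hence $s = 0$ and $t = \pm 1$; substituting into the first equation yields $x^2 = 2$, which has no integral solution. Finally, if $s^2 = t^2$, then the second equation becomes $y^2 = t^4 + 2t^2 + 2 = (t^2+1)^2 + 1$, so that $(t^2+1)^2 < y^2 < (t^2+2)^2$ and $y^2$ is once more caught strictly between consecutive squares. Since every case ends in a contradiction, the system admits no integral solutions.

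The only mildly delicate point is the transitional case $t^2 = s^2 + 1$, where one cannot squeeze and must instead factor the difference of squares; everything else is a routine trichotomy on $s^2$ versus $t^2$, and I anticipate no serious obstacle. The argument is robust at the boundary values $s = 0$ or $t = 0$, since the strictly-between-consecutive-squares conclusion holds for all integers involved.
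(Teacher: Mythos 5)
Your proof is correct, but it takes a genuinely different route from the paper. The paper's entire proof is the one-line observation that the system has no solutions modulo $8$ (a finite check: working mod $8$, the first equation already forces $s^2\equiv t^2\pmod 8$ with both in $\{0,1,4\}$ appropriately paired, and in every surviving case the second equation gives $y^2\equiv 2$ or $5\pmod 8$, neither of which is a square). You correctly identify this modulus-$8$ obstruction as an available shortcut, but instead carry out a Runge-type squeezing argument: the trichotomy on $s^2$ versus $t^2$, with the two transitional cases $t^2=s^2+1$ (dispatched by factoring, giving $x^2=2$) and $s^2=t^2$ (giving $y^2=(t^2+1)^2+1$, trapped between consecutive squares) handled separately. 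I checked each inequality and each boundary case ($s=0$, $t=0$) and found no gaps. What the paper's approach buys is brevity; what yours buys is uniformity with the neighboring lemmas in this subsection (which all squeeze a quartic between consecutive squares) and independence from any computer-assisted or tabulated congruence check. Either proof is acceptable.
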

\begin{proof} There are no solutions $x,y,s,t\in\mathbb{Z}/8\mathbb{Z}$. 
\end{proof}  

\begin{lemma}\label{lem:case3.4} Let $x,y,s,t\in\mathbb{Z}$ be such that  
\begin{equation}\label{eq:case3.4}
x^2-1-s^2-s^4=t^2\qquad\text{and}\qquad y^2-1-t^2-t^4=-(1+s^2).
\end{equation} 
Then $(x,y,s,t)=(\pm({u}^2+1),\pm{u^2},\pm{u},\pm{u})$ for some $u\in\mathbb{Z}$. 
\end{lemma}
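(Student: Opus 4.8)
The plan is to rewrite the two defining equations in the cleaner form
\[
x^2 = s^4 + s^2 + t^2 + 1 \qquad\text{and}\qquad y^2 = t^4 + t^2 - s^2,
\]
and then to rule out both $s^2 > t^2$ and $t^2 > s^2$ using the ``strictly between consecutive squares'' technique that appears throughout this section. Once $s^2 \ne t^2$ is excluded, I am forced into $s^2 = t^2$, and at that point both right-hand sides collapse to perfect squares, which immediately produces the claimed family.

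First I would dispose of the case $s^2 > t^2$ using the first equation. Here $x^2 - (s^2)^2 = s^2 + t^2 + 1 > 0$ while $(s^2+1)^2 - x^2 = s^2 - t^2 > 0$, so $x^2$ lies strictly between the consecutive squares $(s^2)^2$ and $(s^2+1)^2$, a contradiction; this is verbatim the bound already used in Lemma \ref{lem:case3.1} and Lemma \ref{lem:case3.2}. Next I would dispose of the case $t^2 > s^2$ using the second equation: now $y^2 - (t^2)^2 = t^2 - s^2 > 0$ while $(t^2+1)^2 - y^2 = t^2 + s^2 + 1 > 0$, so $y^2$ lies strictly between $(t^2)^2$ and $(t^2+1)^2$, again impossible. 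Note that neither of these steps needs the usual preliminary split into $s=0$ or $t=0$: the strict inequalities $s^2 > t^2$ and $t^2 > s^2$ are each handled uniformly, and when one of $s,t$ vanishes the corresponding inequality is simply vacuous.

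Having ruled out $s^2 \ne t^2$, I am left with $s^2 = t^2$. Writing $u = s$ (so that $t = \pm u$), substitution into the two equations gives $x^2 = s^4 + 2s^2 + 1 = (s^2+1)^2$ and $y^2 = t^4 + t^2 - s^2 = s^4 = (s^2)^2$, whence $x = \pm(u^2+1)$ and $y = \pm u^2$. Combined with $s = \pm u$ and $t = \pm u$, this is exactly the claimed solution set $(x,y,s,t) = (\pm(u^2+1), \pm u^2, \pm u, \pm u)$; the degenerate case $u=0$ recovers $(\pm 1, 0, 0, 0)$.

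I do not expect a genuine obstacle here. Unlike several neighbouring lemmas in this subsection, the defect terms $s^2 - t^2$ and $t^2 + s^2 + 1$ already carry the correct signs to trap each side between consecutive squares, so no auxiliary hyperelliptic-to-elliptic descent and no reduction modulo $4$ or $8$ are required. The only point that warrants care is verifying that the substitution $s^2 = t^2$ turns \emph{both} right-hand sides into exact squares (as it does, yielding $(s^2+1)^2$ and $(s^2)^2$ respectively), so that no spurious solutions survive on the diagonal $s^2 = t^2$.
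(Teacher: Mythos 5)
Your proposal is correct and follows essentially the same route as the paper: trap $x^2=s^4+s^2+t^2+1$ strictly between $(s^2)^2$ and $(s^2+1)^2$ when $s^2>t^2$, trap $y^2=t^4+t^2-s^2$ strictly between $(t^2)^2$ and $(t^2+1)^2$ when $t^2>s^2$, then observe that $s^2=t^2$ makes both right-hand sides perfect squares. Your explicit computation of the differences is a slightly cleaner presentation of the same inequalities, and your observation that no modular or elliptic-curve input is needed matches the paper.
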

\begin{proof} 
Suppose for a contradiction that $s^2>t^2$. Then the same argument given in the proof of Lemma \ref{lem:case3.1} implies that $s^4+s^2+1+t^2$ cannot be an integral square, contradicting the left side of \eqref{eq:case3.4}. Likewise, if $t^2>s^2$ then 
\begin{equation*}
\begin{split}  
(t^2+1)^2&=t^4+t^2+t^2+1\\[5pt]
&>t^4+(t^2-s^2)\\[5pt]
&>t^4=(t^2)^2.
\end{split} 
\end{equation*}
Hence, $t^4+t^2-s^2$ cannot be an integral square, contradicting the right side of \eqref{eq:case3.4}. In particular, we may assume that $s^2=t^2$, from which it easily follows that $(x,y,s,t)=(\pm({u}^2+1),\pm{u^2},\pm{u},\pm{u})$ for some $u\in\mathbb{Z}$ s claimed.   
\end{proof}

\begin{lemma}\label{lem:case3.5} Let $x,y,s,t\in\mathbb{Z}$ be such that  
\begin{equation}\label{eq:case3.5}
x^2-1-s^2-s^4=-t^2\qquad\text{and}\qquad y^2-1-t^2-t^4=s^2.
\end{equation} 
Then $(x,y,s,t)=(\pm{1},\pm{1},0,0)$. 
\end{lemma}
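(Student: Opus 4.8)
The plan is to notice that \eqref{eq:case3.5} is the system of Lemma \ref{lem:case3.2} with the two maps interchanged, so the lemma should follow immediately by a change of variables. Concretely, I would apply the substitution $(x,y,s,t)\mapsto(y,x,t,s)$: under this swap the first equation $x^2-1-s^2-s^4=-t^2$ of \eqref{eq:case3.5} becomes $y^2-1-t^2-t^4=-s^2$, which is the \emph{second} equation of \eqref{eq:case3.2}, while the second equation $y^2-1-t^2-t^4=s^2$ becomes $x^2-1-s^2-s^4=t^2$, which is the \emph{first} equation of \eqref{eq:case3.2}. Hence every integral solution $(x,y,s,t)$ of \eqref{eq:case3.5} produces the integral solution $(y,x,t,s)$ of \eqref{eq:case3.2}.

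Applying Lemma \ref{lem:case3.2}, the only integral solutions of that system are $(\pm1,\pm1,0,0)$, so $(y,x,t,s)=(\pm1,\pm1,0,0)$ and therefore $(x,y,s,t)=(\pm1,\pm1,0,0)$, as claimed. Note that the listed solution set is itself fixed (as a set) by the involution $(x,y,s,t)\mapsto(y,x,t,s)$, since a point $(\varepsilon_1,\varepsilon_2,0,0)$ maps to $(\varepsilon_2,\varepsilon_1,0,0)$; this is why the answer for \eqref{eq:case3.5} coincides exactly with that of \eqref{eq:case3.2}.

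Should a self-contained argument be preferred, I would instead reuse the squeezing estimates from Lemma \ref{lem:case3.1}: from $y^2=t^4+t^2+s^2+1$ and $(t^2+1)^2=t^4+2t^2+1$ one sees that $s^2<t^2$ would trap $(t^2)^2<y^2<(t^2+1)^2$, forcing $s^2\ge t^2$; then from $x^2=s^4+s^2+1-t^2$ and $(s^2+1)^2=s^4+2s^2+1$, together with $s^2+1-t^2\ge1>0$ and $s^2+t^2>0$, one traps $(s^2)^2<x^2<(s^2+1)^2$ whenever $s\neq0$, a contradiction; hence $s=0$, whence $t=0$ and $x^2=y^2=1$. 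Either way the only genuine work is bookkeeping---checking that the swap lands precisely on Lemma \ref{lem:case3.2} and that no solution is gained or lost in the identification---so I expect this to be among the quickest cases in the section, with no substantive obstacle.
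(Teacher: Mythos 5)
Your primary argument is exactly the paper's proof: the paper disposes of this case with the single line ``Substitute $(x,y,s,t)\rightarrow(y,x,t,s)$ and apply Lemma \ref{lem:case3.2},'' and your verification that the swap carries \eqref{eq:case3.5} onto \eqref{eq:case3.2} and that the solution set is stable under the involution is correct. The self-contained squeezing argument you sketch as a backup is also sound, but it is not needed.
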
 
\begin{proof} Substitute $(x,y,s,t)\rightarrow(y,x,t,s)$ and apply Lemma \ref{lem:case3.2} 
\end{proof} 

\begin{lemma}\label{lem:case3.6} Let $x,y,s,t\in\mathbb{Z}$ be such that  
\begin{equation}\label{eq:case3.6}
x^2-1-s^2-s^4=-t^2\qquad\text{and}\qquad y^2-1-t^2-t^4=-s^2.
\end{equation} 
Then $(x,y,s,t)=(\pm{1},\pm{1},0,0)$. 
\end{lemma}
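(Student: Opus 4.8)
The plan is to exploit the symmetry of the system under the substitution $(x,y,s,t)\mapsto(y,x,t,s)$, which interchanges the two equations $x^2-1-s^2-s^4=-t^2$ and $y^2-1-t^2-t^4=-s^2$, and then to follow the now-standard template of this subsection. First I would dispose of the degenerate cases. Setting $s=0$ reduces the first equation to $x^2+t^2=1$, forcing $(x,t)\in\{(0,\pm1),(\pm1,0)\}$; substituting each possibility into the second equation, which becomes $y^2=1+t^2+t^4$, eliminates $(x,t)=(0,\pm1)$ (it would require $y^2=3$) and leaves only $(x,t)=(\pm1,0)$ with $y^2=1$, yielding the claimed points $(\pm1,\pm1,0,0)$. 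By the symmetry above, the case $t=0$ produces exactly the same solutions, so it remains to treat $s,t$ both nonzero.

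For the generic case I would run the familiar \emph{trapped between consecutive squares} argument, using the symmetry to assume without loss of generality that $s^2>t^2\geq1$. Rewriting the first equation as $x^2=s^4+s^2-t^2+1$, I would check the two inequalities $x^2-(s^2)^2=s^2-t^2+1\geq2>0$ and $(s^2+1)^2-x^2=s^2+t^2>0$; together these show that $x^2$ lies strictly between the consecutive squares $(s^2)^2$ and $(s^2+1)^2$ and so cannot itself be a square, contradicting the first equation. The symmetry of the system then rules out $t^2>s^2$ in the identical way, forcing $s^2=t^2$.

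Finally, in the diagonal case $s^2=t^2$ with $s\neq0$, the first equation collapses to $x^2=s^4+1$ (and the second likewise to $y^2=t^4+1=s^4+1$). This is precisely the hyperelliptic quartic whose integral points were already determined in the proof of Lemma \ref{lem:case2.9}, namely $(s,x)=(0,\pm1)$; since we have assumed $s\neq0$, there are no solutions here. Collecting the cases, the only integral solutions are $(x,y,s,t)=(\pm1,\pm1,0,0)$, as claimed.

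I expect no serious obstacle. The off-diagonal cases succumb to the elementary squeeze, and the single diagonal curve $x^2=s^4+1$ has already been resolved earlier in the section, so the only point requiring care is verifying the two inequalities that trap $x^2$ between consecutive squares. In fact one could bypass the elliptic-curve input entirely, since for $s\neq0$ the value $s^4+1$ is itself strictly between $(s^2)^2$ and $(s^2+1)^2$, making the diagonal case elementary as well; citing Lemma \ref{lem:case2.9} merely keeps the exposition uniform with the surrounding lemmas.
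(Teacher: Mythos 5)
Your proof is correct and follows essentially the same route as the paper: a squeeze between consecutive squares rules out $s^2\neq t^2$ (the paper squeezes the second equation via Lemma \ref{lem:case3.2}, you squeeze the first — equivalent by the symmetry), and the diagonal case reduces to the quartic $x^2=s^4+1$ already handled earlier in the section. Your closing observation that $s^4+1$ lies strictly between $(s^2)^2$ and $(s^2+1)^2$ for $s\neq 0$, making the elliptic-curve input unnecessary for integral points, is a valid and slightly more elementary finish, but the overall argument is the same.
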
 
\begin{proof} Suppose for a contradiction that $t^2>s^2$. Then the same argument given in the proof of Lemma \ref{lem:case3.2} implies that $t^4+t^2-s^2+1$ cannot be an integral square, contradicting the right side of \eqref{eq:case3.6}. On the other hand, this problem is symmetric in $s$ and $t$, so that $s^2>t^2$ is also impossible. Hence, we may assume that $s^2=t^2$. However, in this case $y^2=t^4+1$, and we already showed in the proof of Lemma \ref{lem:case3.2} that the only integral (in fact rational) solutions to this equation are $(t,y)=(0,\pm{1})$. Therefore, $t=0=s$ and $x=\pm{1}=y$. In particular, the solutions $(x,y,s,t)=(\pm{1},\pm{1},0,0)$ are the only integral solutions to \eqref{eq:case3.6} as claimed.  
\end{proof} 

\begin{lemma}\label{lem:case3.7} Let $x,y,s,t\in\mathbb{Z}$ be such that  
\begin{equation}\label{eq:case3.7}
x^2-1-s^2-s^4=-t^2\qquad\text{and}\qquad y^2-1-t^2-t^4=1+s^2.
\end{equation} 
Then $(x,y,s,t)=(0,\pm{2},0,\pm{1})$. 
\end{lemma}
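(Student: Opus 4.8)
The plan is to rewrite the system in \eqref{eq:case3.7} as
\[x^2 = s^4+s^2+1-t^2 \qquad\text{and}\qquad y^2 = t^4+t^2+s^2+2,\]
and then, following the pattern of the preceding lemmas, to dispose of the degenerate cases $s=0$ and $t=0$ before squeezing these two quantities between consecutive integer squares. First I would set $s=0$: the left equation becomes $x^2=1-t^2$, which forces $t^2\le 1$, so $t\in\{0,\pm1\}$; the value $t=0$ gives $y^2=2$ (impossible), while $t=\pm1$ gives $x=0$ and $y^2=4$, producing exactly the claimed points $(0,\pm2,0,\pm1)$. Next I would set $t=0$ with $s\neq0$: the left equation becomes $x^2=s^4+s^2+1$, and by the determination of the integral points on this curve in the proof of Lemma~\ref{lem:case2.1}, the only possibility is $(s,x)=(0,\pm1)$, which contradicts $s\neq0$.

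With both $s$ and $t$ nonzero I would run the usual trichotomy on $s^2$ versus $t^2$. If $s^2>t^2$, then $s^2+1-t^2\ge 2>0$ and $s^2+t^2>0$, so $(s^2)^2 < x^2 < (s^2+1)^2$ and the left equation has no integral solution. The reverse inequality is slightly more delicate and carries the real content: if $t^2>s^2$, I would first observe that $t^2-s^2=1$ is impossible for $s\neq0$ (since $(t-s)(t+s)=1$ forces $s=0$), so in fact $t^2\ge s^2+2$; this strict gap is precisely what is needed to deduce $(t^2)^2 < y^2 < (t^2+1)^2$ from the right equation, again a contradiction. Finally, if $s^2=t^2$, the left equation collapses to $x^2=s^4+1$, whose only integral points $(s,x)=(0,\pm1)$ (established in the proof of Lemma~\ref{lem:case2.9}) once more contradict $s\neq0$.

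The one genuine subtlety, and the step I expect to be the crux, is the boundary case $t^2=s^2+1$ of the right-hand squeeze: there $y^2=t^4+t^2+s^2+2$ equals $(t^2+1)^2$ \emph{exactly} rather than lying strictly between consecutive squares, so the squeeze cannot by itself exclude it. This is no accident, since it is precisely where the solution $(0,\pm2,0,\pm1)$ lives ($s=0$, $t=\pm1$). Isolating the case $s=0$ at the very start cleanly captures this boundary, after which the strict inequality $t^2\ge s^2+2$ is available for all remaining $(s,t)$ with $s\neq0$, and the trichotomy closes without further difficulty.
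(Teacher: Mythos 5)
Your proposal is correct and follows essentially the same strategy as the paper: dispose of $s=0$ and $t=0$ first, then squeeze $x^2=s^4+s^2+1-t^2$ (resp.\ $y^2=t^4+t^2+s^2+2$) strictly between $(s^2)^2$ and $(s^2+1)^2$ (resp.\ $(t^2)^2$ and $(t^2+1)^2$) in the cases $s^2>t^2$ and $t^2>s^2$, using $t^2-s^2>1$ for the latter, and finish the case $s^2=t^2$ via the curve $x^2=s^4+1$. The only cosmetic difference is in the subcase $t=0$, where the paper notes that $y^2-s^2=2$ has no solutions modulo $4$ while you invoke the curve $x^2=s^4+s^2+1$ from Lemma~\ref{lem:case2.1}; both are valid, and your closing observation that the boundary $t^2=s^2+1$ of the squeeze is exactly where the solution $(0,\pm2,0,\pm1)$ lives is a nice sanity check.
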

\begin{proof} If $s=0$, then $x^2+t^2=1$ so that $(x,t)\in\{(0,\pm{1}),(\pm{1},0)\}$. In the the first case, we obtain the total solution $(x,y,s,t)=(0,\pm{2},0,\pm{1})$ as claimed. However, in the other case (when $s=0=0$), we see that $y^2=2$, a contradiction. Likewise, if $t=0$, then $y^2-s^2=2$. However, this equation has no solitons modulo $4$. Therefore, we may assume that neither $s$ nor $t$ is zero. Suppose for a contradiction that $s^2>t^2\geq1$. Then 
\begin{equation*}
\begin{split}  
(s^2+1)^2&=s^4+s^2+s^2+1\\[5pt]
&>s^4+s^2-t^2+1\\[5pt]
&=t^4+(s^2-t^2)+1\\[5pt] 
&>t^4=(t^2)^2.
\end{split} 
\end{equation*}
Hence, $s^4+s^2-t^2+1$ cannot be an integral square, contradicting the left side of \eqref{eq:case3.7}. On the other hand, suppose that $t^2>s^2\geq1$. Note that if $t^2-s^2=1$, then $(t-s)(t+s)=1$. However, this immediately implies that $s=0$, a contradiction. Therefore $t^2-s^2>1$ and 
\begin{equation*}
\begin{split}  
(t^2+1)^2&=t^4+t^2+t^2+1\\[5pt]
&=t^4+t^2+(t^2-s^2)+s^2+1\\[5pt]
&>t^4+t^2+1+s^2+1\\[5pt] 
&>t^4=(t^2)^2.
\end{split} 
\end{equation*}
Hence, $t^4+t^2+1+s^2+1$ cannot be an integral square, contradicting the right side of \eqref{eq:case3.7}. In particular, we may assume that $s^2=t^2$ and that $s\neq0$. But then $x^2=s^4+1$, and we have already shown (in the proof of Lemma \ref{lem:case3.2} with the curve $y^2=t^4+1$) that the only rational solutions to this equation correspond to $s=0$, a contradiction. Therefore, we have found all integral solutions to \eqref{eq:case3.7} as claimed.        
\end{proof} 

\begin{lemma}\label{lem:case3.8} Let $x,y,s,t\in\mathbb{Z}$ be such that  
\begin{equation}\label{eq:case3.8}
x^2-1-s^2-s^4=-t^2\qquad\text{and}\qquad y^2-1-t^2-t^4=-(1+s^2).
\end{equation} 
Then $(x,y,s,t)=(\pm{1},0,0,0)$. 
\end{lemma}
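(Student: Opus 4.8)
The plan is to first rewrite the two defining equations in the convenient form
$x^2 = s^4+s^2+1-t^2$ and $y^2 = t^4+t^2-s^2$, then dispose of the degenerate cases $s=0$ and $t=0$ by hand before running a squeezing argument between consecutive integer squares in the remaining range. The system is \emph{not} symmetric in $s$ and $t$, so the main design choice is which of the two equations to squeeze in each regime.

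First I would treat $s=0$: the left equation becomes $x^2=1-t^2$, which forces $t^2\le 1$. If $t=0$ this gives $x=\pm 1$, and then the right equation yields $y^2=0$, producing exactly the claimed solution $(\pm 1,0,0,0)$; if $t=\pm 1$ then $x=0$ and the right equation yields $y^2=2$, which is impossible. Next, for $t=0$ with $s\neq 0$, the right equation reads $y^2=-s^2$, forcing $s=0$ and hence a contradiction. Thus I may assume $s\neq 0$ and $t\neq 0$ from this point on.

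The heart of the argument is the asymmetric squeeze. When $t^2\le s^2$ (which includes the boundary case $s^2=t^2$), I would bound the left-hand quantity: since $s\neq 0$, one has $s^4 < s^4+s^2+1-t^2 < s^4+2s^2+1=(s^2+1)^2$, the lower bound because $s^2+1-t^2\ge 1>0$ and the upper because $s^2+t^2>0$. Hence $x^2$ lies strictly between the consecutive squares $(s^2)^2$ and $(s^2+1)^2$, which is impossible. When instead $t^2>s^2$, I would bound the right-hand quantity: $t^4 < t^4+t^2-s^2 < t^4+2t^2+1=(t^2+1)^2$, so $y^2$ is trapped between $(t^2)^2$ and $(t^2+1)^2$, again impossible. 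These two ranges exhaust all possibilities with $s,t\neq 0$, so the only surviving solutions are those with $s=0$ found above, namely $(\pm 1,0,0,0)$.

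I expect no serious obstacle; the only subtlety is matching each size regime with the correct equation, since squeezing the wrong one leaves a gap. In the boundary case $s^2=t^2$ the left equation reduces to $x^2=s^4+1$, and one could alternatively invoke the determination of the integral (indeed rational) points on this curve from the proof of Lemma~\ref{lem:case3.2}; but the elementary squeeze already covers it, so no further curve computation is needed.
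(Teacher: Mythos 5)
Your proof is correct, and while the core of it is the same inequality squeeze the paper uses, you handle two subcases in a genuinely more elementary way. For $t=0$ the paper works with the left equation $x^2=s^4+s^2+1$ and resolves it by computing the Mordell--Weil group of an associated elliptic curve with Magma; you instead read off $y^2=-s^2$ from the right equation, which kills the case instantly. For the boundary case $s^2=t^2$ the paper again falls back on a curve computation ($x^2=s^4+1$, handled via the rank-zero curve $Y^2=X^3-4X$ from Lemma~\ref{lem:case3.2}), whereas you fold this case into the regime $t^2\le s^2$ and observe that $s^4 < s^4+s^2+1-t^2 < (s^2+1)^2$ already holds there (the lower bound needs only $t^2\le s^2$, the upper only $s^2+t^2>0$), so no separate treatment is needed. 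Your asymmetric assignment of which equation to squeeze in which regime is exactly right: the left equation traps $x^2$ between $(s^2)^2$ and $(s^2+1)^2$ when $t^2\le s^2$, and the right equation traps $y^2$ between $(t^2)^2$ and $(t^2+1)^2$ when $t^2>s^2$, and these exhaust all cases with $s,t\neq 0$. The net effect is a complete proof of the lemma that requires no computer algebra at all, which is a real improvement over the paper's argument for this case.
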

\begin{proof} If $s=0$, then $x^2+t^2=1$ so that $(x,t)\in\{(0,\pm{1}),(\pm{1},0)\}$. However, in the $t=\pm{1}$ case the left side of \eqref{eq:case3.8} becomes $y^2=2$, a contradiction. Therefore $s=0=t$, and we obtain the solutions $(x,y,s,t)=(\pm{1},0,0,0)$ as claimed. On the other hand, when $t=0$ then $x^2=s^4+s^2+1$. However, the hyperelliptic curve $C$ with affine model $x^2=s^4+s^2+1$ is birational over $\mathbb{Q}$ to $E: Y^2=X^3 - 2X^2 - 3X$. Moreover, we compute with Magma that the Mordell-Weil group of $E$ is $E(\mathbb{Q})\cong\mathbb{Z}/2\mathbb{Z}\times \mathbb{Z}/2\mathbb{Z}$. However, the projective model of $C$ has $4$ known rational points: two at infinity and $(0,\pm{1})$. Therefore, we rediscover the solution $(x,y,s,t)=(\pm{1},0,0,0)$ when $t=0$. In particular, we may assume that neither $s$ nor $t$ is zero. On the other hand, if $s^2>t^2$, then the same argument given in the proof of Lemma \ref{lem:case3.7} implies that $s^4+s^2-t^2+1$ cannot be an integral square, contradicting the left side of \eqref{eq:case3.8}. Likewise the same argument given in the proof of Lemma \ref{lem:case3.4} implies that $t^4+t^2-s^2$ cannot be an integral square, contradicting the right side of \eqref{eq:case3.8}. In particular, we may assume that $s^2=t^2$ and that $s\neq0$. But then $x^2=s^4+1$, and we have already shown (in the proof of Lemma \ref{lem:case3.2} with the curve $y^2=t^4+1$) that the only rational solutions to this equation correspond to $s=0$, a contradiction. Therefore, we have found all integral solutions to \eqref{eq:case3.8} as claimed.       
\end{proof} 

\begin{lemma}\label{lem:case3.9} There are no simultaneous integral solutions $x,y,s,t\in\mathbb{Z}$ to the equations 
\begin{equation}\label{eq:case3.9}
x^2-1-s^2-s^4=1+t^2\qquad\text{and}\qquad y^2-1-t^2-t^4=s^2.
\end{equation} 
\end{lemma}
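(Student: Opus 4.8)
The plan is to first rewrite the two defining equations in the cleaner form
\[ x^2 = s^4+s^2+t^2+2 \qquad\text{and}\qquad y^2 = t^4+t^2+s^2+1, \]
so that the system is visibly a twisted version of the one in Lemma~\ref{lem:case3.3} (the two systems are in fact interchanged by the substitution $(x,s)\leftrightarrow(y,t)$). Since Lemma~\ref{lem:case3.3} was dispatched by a congruence obstruction modulo $8$, and since the asymmetric constants $+2$ and $+1$ break the $s\leftrightarrow t$ symmetry that powers the consecutive-square arguments elsewhere in this subsection, I would first attempt to rule out all solutions by reduction modulo $8$.

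Concretely, I would use that the squares modulo $8$ are exactly $\{0,1,4\}$, and that for an integer $u$ the pair $(u^2,u^4)\bmod 8$ equals $(1,1)$ when $u$ is odd, $(0,0)$ when $u\equiv0\pmod 4$, and $(4,0)$ when $u\equiv2\pmod 4$; in particular $u^4+u^2\equiv 2,0,4\pmod 8$ in these three cases respectively. Feeding this into the first equation, $x^2\equiv (s^4+s^2)+t^2+2\pmod 8$, I expect to find that the right-hand side lands in $\{0,1,4\}$ only when $s$ is odd and $t$ is even. The key step is then to substitute this parity information into the second equation: with $s$ odd (so $s^2\equiv1$) and $t$ even (so $t^4+t^2\equiv0$ or $4$), the right-hand side $t^4+t^2+s^2+1$ becomes $\equiv 2$ or $6\pmod 8$, neither of which is a square modulo $8$. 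This contradiction shows the system is already unsolvable in $\mathbb{Z}/8\mathbb{Z}$, which immediately gives the lemma.

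The main obstacle I anticipate is bookkeeping rather than conceptual: verifying that no residue class for $(s,t)$ simultaneously makes both right-hand sides squares modulo $8$ requires running cleanly through the handful of parity cases, and one must be careful that the constraint extracted from the first equation is imposed on the \emph{same} $s,t$ appearing in the second. Should the modulus $8$ unexpectedly fail to close every case, the fallback is the consecutive-squares technique of Lemma~\ref{lem:case3.1}: from the first equation one traps $x^2$ strictly between $(s^2)^2$ and $(s^2+1)^2$ whenever $s^2>t^2+1$, and from the second one traps $y^2$ strictly between $(t^2)^2$ and $(t^2+1)^2$ whenever $t^2>s^2$, forcing $t^2\leq s^2\leq t^2+1$. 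The edge case $s^2=t^2+1$ collapses via $(s-t)(s+t)=1$ to $t=0$, whence $y^2=2$ is impossible, while the case $s^2=t^2$ yields $x^2=(s^2+1)^2+1$, a square only if $s^2+1=0$, again impossible. Either route terminates, but I expect the modulo-$8$ argument to be the shortest and to match the style of Lemma~\ref{lem:case3.3}.
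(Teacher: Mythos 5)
Your proposal is correct and matches the paper's proof, which disposes of this case by exactly the same observation: the system has no solutions in $\mathbb{Z}/8\mathbb{Z}$. Your mod-$8$ case analysis (first equation forces $s$ odd and $t$ even, after which $t^4+t^2+s^2+1\equiv 2$ or $6\pmod 8$ is not a square) is a correct and complete verification of what the paper states without detail.
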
 
\begin{proof} There are no solutions $x,y,s,t\in\mathbb{Z}/8\mathbb{Z}$.  
\end{proof}

\begin{lemma}\label{lem:case3.10} Let $x,y,s,t\in\mathbb{Z}$ be such that  
\begin{equation}\label{eq:case3.10}
x^2-1-s^2-s^4=1+t^2\qquad\text{and}\qquad y^2-1-t^2-t^4=-s^2.
\end{equation} 
Then $(x,y,s,t)=(\pm{2},0,\pm{1},0)$. 
\end{lemma}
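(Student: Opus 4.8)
The quickest route is to exploit the built-in symmetry of the system, exactly as in the reduction used for Lemma \ref{lem:case3.5}. I would first observe that the substitution $(x,y,s,t)\rightarrow(y,x,t,s)$ carries \eqref{eq:case3.10} onto the system \eqref{eq:case3.7} of Lemma \ref{lem:case3.7}: under this relabeling the equation $y^2-1-t^2-t^4=-s^2$ becomes the first equation $x^2-1-s^2-s^4=-t^2$ of \eqref{eq:case3.7}, and $x^2-1-s^2-s^4=1+t^2$ becomes its second equation $y^2-1-t^2-t^4=1+s^2$. Since Lemma \ref{lem:case3.7} is already established with solution set $(0,\pm2,0,\pm1)$, transporting these back through the substitution gives precisely $(x,y,s,t)=(\pm2,0,\pm1,0)$, making the lemma a one-line consequence of the earlier one.

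If instead one prefers a self-contained argument paralleling the other lemmas of this subsection, I would rewrite \eqref{eq:case3.10} as
\[x^2=s^4+s^2+t^2+2\qquad\text{and}\qquad y^2=t^4+t^2-s^2+1,\]
and dispose of the degenerate cases first. When $s=0$ the first equation becomes $x^2-t^2=2$, which is impossible since $x-t$ and $x+t$ share a parity. When $t=0$ the second equation reads $y^2+s^2=1$, forcing $(s,y)=(\pm1,0)$ or $(0,\pm1)$; the former gives $x^2=4$ and the listed solutions $(\pm2,0,\pm1,0)$, while the latter forces $x^2=2$ and is discarded.

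With $s,t$ both nonzero, I would trap the relevant quantity strictly between consecutive squares, exactly as in Lemmas \ref{lem:case3.1}--\ref{lem:case3.4}. If $t^2>s^2$, then $t^4<t^4+t^2-s^2+1<(t^2+1)^2$ shows the right-hand equation is unsolvable; if $s^2>t^2$, then (noting $s^2-t^2>1$, since $s^2-t^2=1$ would force $t=0$) one has $s^4<s^4+s^2+t^2+2<(s^2+1)^2$, ruling out the left-hand equation. Hence $s^2=t^2$, and substituting into the second equation collapses it to $y^2=t^4+1$.

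The only real obstacle in the direct route is this final genus-one curve $y^2=t^4+1$, which resists elementary squeezing or congruence arguments. Fortunately it is precisely the curve resolved in the proof of Lemma \ref{lem:case3.2}, where its only integral (indeed rational) points are shown to be $(t,y)=(0,\pm1)$ via the rank-zero elliptic curve $Y^2=X^3-4X$. Since we are in the subcase $t\neq0$, this yields a contradiction, leaving only the solutions found when $t=0$. Either approach thus produces the claimed set; I would present the substitution to Lemma \ref{lem:case3.7} as the primary proof, with the direct computation serving as a cross-check.
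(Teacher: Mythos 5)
Your primary argument is exactly the paper's proof: the substitution $(x,y,s,t)\rightarrow(y,x,t,s)$ transports \eqref{eq:case3.10} onto \eqref{eq:case3.7} and the solution set $(0,\pm{2},0,\pm{1})$ back to $(\pm{2},0,\pm{1},0)$. Your supplementary direct argument (degenerate cases, squeezing between consecutive squares, and reducing to $y^2=t^4+1$ from Lemma \ref{lem:case3.2}) is also sound, but the one-line reduction is all that is needed and is what the paper does.
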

\begin{proof} Substitute $(x,y,s,t)\rightarrow(y,x,t,s)$ and apply Lemma \ref{lem:case3.7}. 
\end{proof} 

\begin{lemma}\label{lem:case3.11} There are no simultaneous integral solutions $x,y,s,t\in\mathbb{Z}$ to the equations  
\begin{equation}\label{eq:case3.11}
x^2-1-s^2-s^4=1+t^2\qquad\text{and}\qquad y^2-1-t^2-t^4=1+s^2.
\end{equation}
\end{lemma}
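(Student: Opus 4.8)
The plan is to follow the template of the other ``no solution'' lemmas in this subsection (Lemmas \ref{lem:case3.3} and \ref{lem:case3.9}) and produce a purely local obstruction modulo $8$; I do not expect to need any global machinery (elliptic curves, Runge's method) here. First I would clear the constant and quadratic terms to rewrite the system \eqref{eq:case3.11} in the cleaner form
\[x^2=s^4+s^2+t^2+2\qquad\text{and}\qquad y^2=t^4+t^2+s^2+2,\]
noting in passing that the system is symmetric under $(x,y,s,t)\mapsto(y,x,t,s)$.

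The key observation is that, modulo $8$, both right-hand sides depend only on $s$ and $t$ modulo $4$: according as $a\equiv 0,\ \text{odd},\ 2\pmod 4$ one has $a^2\equiv 0,1,4\pmod 8$ and $a^4\equiv 0,1,0\pmod 8$, so that $a^4+a^2\equiv 0,2,4\pmod 8$ respectively. Thus I would reduce to the nine residue classes of $(s,t)$ modulo $4$ and, for each, record the residues of the two right-hand sides modulo $8$.

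Since the squares modulo $8$ are exactly $\{0,1,4\}$, the last step is the finite check that in each of the nine classes at least one of the two right-hand sides lies in $\{2,3,5,6,7\}\pmod 8$, and so is not a square. For instance, when $s$ is odd and $t\equiv 0\pmod 4$ the first equation gives the harmless $x^2\equiv 4$, but the second forces $y^2\equiv t^4+t^2+s^2+2\equiv 0+0+1+2\equiv 3\pmod 8$, which is impossible. Running through all nine classes this way shows the system has no solutions in $\mathbb{Z}/8\mathbb{Z}$, and hence none in $\mathbb{Z}$.

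There is essentially no real obstacle; the only subtlety is the choice of modulus. A reduction modulo $4$ alone handles every class except the one in which $s$ and $t$ are \emph{both} odd: there both right-hand sides are $\equiv 1\pmod 4$, and no contradiction appears. Passing to modulus $8$ resolves exactly this case, since for $s,t$ both odd the first right-hand side becomes $s^4+s^2+t^2+2\equiv 1+1+1+2\equiv 5\pmod 8$, a non-square. This is why, as in Lemmas \ref{lem:case3.3} and \ref{lem:case3.9}, the correct modulus here is $8$ rather than $4$.
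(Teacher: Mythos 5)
Your proof is correct and is exactly the paper's argument: the paper's entire proof of Lemma \ref{lem:case3.11} is the one-line observation that the system has no solutions in $\mathbb{Z}/8\mathbb{Z}$, which is precisely the finite check you carry out (and your residue computations, including the identification of the doubly-odd class as the one forcing the modulus up from $4$ to $8$, all check out).
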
 
\begin{proof} There are no solutions $x,y,s,t\in\mathbb{Z}/8\mathbb{Z}$.  
\end{proof} 

\begin{lemma}\label{lem:case3.12} There are no simultaneous integral solutions $x,y,s,t\in\mathbb{Z}$ to the equations  
\begin{equation}\label{eq:case3.12}
x^2-1-s^2-s^4=1+t^2\qquad\text{and}\qquad y^2-1-t^2-t^4=-(1+s^2).
\end{equation}
\end{lemma}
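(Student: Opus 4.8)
The plan is to show that the system \eqref{eq:case3.12} is already insoluble modulo $8$, exactly as in Lemmas \ref{lem:case3.3}, \ref{lem:case3.9}, and \ref{lem:case3.11}. First I would isolate the squares to rewrite the two equations as
\[
x^2 = s^4+s^2+t^2+2 \qquad\text{and}\qquad y^2 = t^4+t^2-s^2.
\]
The relevant arithmetic is that the squares modulo $8$ are exactly $\{0,1,4\}$, that $n^2\equiv 1\pmod 8$ for odd $n$ while $n^2\equiv 0$ or $4\pmod 8$ for even $n$, and that $n^4\equiv 1\pmod 8$ for odd $n$ while $n^4\equiv 0\pmod 8$ for even $n$. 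I would then split into four cases according to the parities of $s$ and $t$ and show that in each case one of the two right-hand sides is a quadratic non-residue modulo $8$.

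Three of the four cases are dispatched by the first equation alone. If $s$ and $t$ are both even, then its right-hand side is $\equiv s^2+t^2+2\pmod 8$ with $s^2+t^2\equiv 0$ or $4$, forcing $x^2\equiv 2$ or $6\pmod 8$; if $s$ is even and $t$ is odd, it is $\equiv s^2+3\pmod 8$, hence $x^2\equiv 3$ or $7\pmod 8$; and if both are odd, it is $\equiv 5\pmod 8$. In every one of these subcases $x^2$ is forced to be a non-residue modulo $8$, a contradiction.

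The remaining case, $s$ odd and $t$ even, is the one I expect to be the sole obstacle, since here the first equation reduces to $x^2\equiv t^2+4\pmod 8$, which lies in $\{0,4\}$ and is therefore a legitimate square modulo $8$. To eliminate it I would instead turn to the second equation: with $s$ odd and $t$ even its right-hand side reduces to $t^4+t^2-s^2\equiv t^2-1\pmod 8$, which lies in $\{3,7\}$ and is a non-residue. Thus no residue class survives in any case, the system has no solution over $\mathbb{Z}/8\mathbb{Z}$, and hence none over $\mathbb{Z}$, completing the proof.
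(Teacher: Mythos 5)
Your proposal is correct and takes exactly the same route as the paper, which simply asserts that the system has no solutions over $\mathbb{Z}/8\mathbb{Z}$; your case analysis by the parities of $s$ and $t$ (with the residues $2,6$; $3,7$; $5$; and finally $3,7$ from the second equation in the $s$ odd, $t$ even case) is a correct and complete verification of that assertion.
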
 
\begin{proof} There are no solutions $x,y,s,t\in\mathbb{Z}/8\mathbb{Z}$.  
\end{proof} 

\begin{lemma}\label{lem:case3.13} Let $x,y,s,t\in\mathbb{Z}$ be such that  
\begin{equation}\label{eq:case3.13}
x^2-1-s^2-s^4=-(t^2+1)\qquad\text{and}\qquad y^2-1-t^2-t^4=s^2.
\end{equation} 
Then $(x,y,s,t)=(\pm{u}^2,\pm{(u^2+1)},\pm{u},\pm{u})$ for some $u\in\mathbb{Z}$. 
\end{lemma}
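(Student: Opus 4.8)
The plan is to avoid redoing the casework and instead exploit the symmetry of this system with one already treated in this subsection, exactly as in the proofs of Lemmas~\ref{lem:case3.5} and \ref{lem:case3.10}. I would apply the substitution $(x,y,s,t)\mapsto(y,x,t,s)$ to the pair of equations in Lemma~\ref{lem:case3.4}, namely $x^2-1-s^2-s^4=t^2$ and $y^2-1-t^2-t^4=-(1+s^2)$; under this swap these become $y^2-1-t^2-t^4=s^2$ and $x^2-1-s^2-s^4=-(1+t^2)$, which is precisely the system \eqref{eq:case3.13} (with the two equations listed in the opposite order). The solution set therefore transforms accordingly, and since Lemma~\ref{lem:case3.4} gives $(x,y,s,t)=(\pm(u^2+1),\pm u^2,\pm u,\pm u)$, swapping the first coordinate with the second and the third with the fourth yields $(x,y,s,t)=(\pm u^2,\pm(u^2+1),\pm u,\pm u)$, exactly the claim. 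So in the end I expect the proof to read simply: ``Substitute $(x,y,s,t)\to(y,x,t,s)$ and apply Lemma~\ref{lem:case3.4}.''

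Should a self-contained argument be preferred, I would instead run the same squeezing-between-consecutive-squares method used repeatedly above. Rewriting \eqref{eq:case3.13} as $x^2=s^4+s^2-t^2$ and $y^2=t^4+t^2+s^2+1$, the first step is to force $s^2=t^2$. If $t^2>s^2$, then from the second equation $(t^2)^2<y^2<(t^2+1)^2$, since $t^2+s^2+1>0$ gives the lower bound and $s^2<t^2$ gives the upper; symmetrically, if $s^2>t^2$, then from the first equation $(s^2)^2<x^2<(s^2+1)^2$, since $s^2-t^2>0$ gives the lower bound and $s^2-t^2<2s^2+1$ the upper. Either possibility contradicts $x^2$ or $y^2$ being a perfect square, so $s^2=t^2$.

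With $s^2=t^2$ in hand, the two equations collapse cleanly: the first becomes $x^2=s^4$, so $x=\pm s^2$, and the second becomes $y^2=t^4+2t^2+1=(t^2+1)^2$, so $y=\pm(t^2+1)$. Writing $s=\pm u$, $t=\pm u$ then recovers the full family $(x,y,s,t)=(\pm u^2,\pm(u^2+1),\pm u,\pm u)$. I anticipate no genuine obstacle in either route; the only detail needing a moment's care is checking that the substitution into Lemma~\ref{lem:case3.4} lands exactly on \eqref{eq:case3.13} (equivalently, that the strict inequalities in the squeeze are genuine), and here the argument is even slightly cleaner than its neighbors because the right-hand side $t^4+t^2+s^2+1$ of the second equation is always positive, so no separate handling of $s=0$ or $t=0$ is required.
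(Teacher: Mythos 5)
Your primary route is exactly the paper's proof: the paper disposes of this case with the single line ``Substitute $(x,y,s,t)\rightarrow(y,x,t,s)$ and apply Lemma \ref{lem:case3.4},'' and your verification that the substitution carries \eqref{eq:case3.13} onto the system of Lemma \ref{lem:case3.4} and transforms its solution set into the claimed family is correct. The self-contained squeeze argument you sketch as a backup is also sound, but it is not needed.
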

\begin{proof} Substitute $(x,y,s,t)\rightarrow (y,x,t,s)$ and apply Lemma \ref{lem:case3.4} 
\end{proof} 

\begin{lemma}\label{lem:case3.14} Let $x,y,s,t\in\mathbb{Z}$ be such that  
\begin{equation}\label{eq:case3.14}
x^2-1-s^2-s^4=-(t^2+1)\qquad\text{and}\qquad y^2-1-t^2-t^4=-s^2.
\end{equation} 
Then $(x,y,s,t)=(0,\pm{1},0,0)$. 
\end{lemma}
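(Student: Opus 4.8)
The plan is to avoid any direct Diophantine analysis and instead exploit the symmetry between the two equations of \eqref{eq:case3.14}, reducing the lemma to a case already settled earlier in this subsection. The system \eqref{eq:case3.14} is not symmetric in $(s,t)$, but the involution $(x,y,s,t)\mapsto(y,x,t,s)$ should interchange its two defining equations up to a relabeling, and I would check that it lands exactly on the system of Lemma~\ref{lem:case3.8}.

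First I would carry out the substitution. Under $(x,y,s,t)\mapsto(y,x,t,s)$, the equation $x^2-1-s^2-s^4=-(t^2+1)$ turns into $y^2-1-t^2-t^4=-(s^2+1)$, while $y^2-1-t^2-t^4=-s^2$ turns into $x^2-1-s^2-s^4=-t^2$. Hence a solution of \eqref{eq:case3.14} corresponds, after relabeling, to a solution of
\[
x^2-1-s^2-s^4=-t^2 \qquad\text{and}\qquad y^2-1-t^2-t^4=-(1+s^2),
\]
which is precisely \eqref{eq:case3.8}. By Lemma~\ref{lem:case3.8} the only integral solutions of that system are $(x,y,s,t)=(\pm1,0,0,0)$, and undoing the involution returns $(x,y,s,t)=(0,\pm1,0,0)$, as claimed.

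The only point requiring care is the bookkeeping: one must confirm that the swap matches \eqref{eq:case3.14} to \eqref{eq:case3.8} on the nose (rather than to a neighboring case such as \eqref{eq:case3.13}) and that the solution set is transported back correctly through the involution. Because the reduction is exact, no new curves or congruences enter.

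For completeness, I note how a direct proof would run, mirroring the other cases in this subsection. Rewriting the system as $x^2=s^4+s^2-t^2$ and $y^2=t^4+t^2+1-s^2$, one would first dispose of the boundary cases $s=0$ and $t=0$ by elementary factoring, then show that $s^4+s^2-t^2$ is trapped strictly between $(s^2)^2$ and $(s^2+1)^2$ when $s^2>t^2$, and that $t^4+t^2+1-s^2$ is trapped strictly between $(t^2)^2$ and $(t^2+1)^2$ when $t^2>s^2$; in either case the relevant side cannot be a square. This forces $s^2=t^2$, whereupon the second equation collapses to $y^2=t^4+1$, and the rank-zero elliptic curve $Y^2=X^3-4X$ from Lemma~\ref{lem:case3.2} forces $t=0$, hence $s=0$. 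The main obstacle in this alternative route is that diagonal step $y^2=t^4+1$, but the symmetry reduction renders it unnecessary.
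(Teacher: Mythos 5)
Your reduction is correct, and it is genuinely different from (and shorter than) the paper's own proof of this lemma. The bookkeeping checks out: under $(x,y,s,t)\mapsto(y,x,t,s)$ the system \eqref{eq:case3.14} does land exactly on \eqref{eq:case3.8} (in the indexing of this subsection, case $(4,2)$ goes to case $(2,4)$), Lemma \ref{lem:case3.8} is proved independently so there is no circularity, and transporting $(\pm1,0,0,0)$ back through the involution gives $(0,\pm1,0,0)$ as claimed. The paper instead argues directly: for $s^2>t^2$ it traps $x^2=s^4+s^2-t^2$ strictly between $(s^2)^2$ and $(s^2+1)^2$, for $t^2>s^2$ it reuses the inequality from Lemma \ref{lem:case3.2} on $y^2=t^4+t^2+1-s^2$, and for $s^2=t^2$ it falls back on the curve $y^2=t^4+1$ — essentially the direct route you sketch as a fallback (though the paper does not need to separate out the boundary cases $s=0$, $t=0$; the trichotomy on $s^2$ versus $t^2$ already covers them). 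Your symmetry reduction is exactly the device the paper itself uses for Lemmas \ref{lem:case3.5}, \ref{lem:case3.10}, and \ref{lem:case3.13}, so it fits the architecture of the section; what it buys is that no new inequalities or curves need to be checked, at the cost of depending on Lemma \ref{lem:case3.8} rather than being self-contained.
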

\begin{proof} 
Suppose first that $s^2>t^2$. Then 
 \begin{equation*}
\begin{split}  
(s^2+1)^2&=s^4+s^2+s^2+1\\[5pt]
&>s^4+s^2\\[5pt]
&\geq s^4+(s^2-t^2)\\[5pt]
&>s^4=(s^2)^2.
\end{split} 
\end{equation*}
Therefore, $s^4+s^2-t^2$ cannot be a square, contradicting the left side of \eqref{eq:case3.14}. Likewise, the same argument given in the proof of Lemma \ref{lem:case3.2} implies that $t^2>s^2$ is impossible: $t^4+t^2-s^2+1$ cannot be an integral square, contradicting the right side of \eqref{eq:case3.14}. Thus it must be the case that $s^2=t^2$. But then $y^2=t^4+1$, and we showed in the proof of Lemma \ref{lem:case3.2} that the only rational solutions to this equation are $(t,y)=(0,\pm{1})$. In particular, we deduce that $(x,y,s,t)=(0,\pm{1},0,0)$ are the only integral solutions to \eqref{eq:case3.14} as claimed.  
\end{proof} 

\begin{lemma}\label{lem:case3.15} There are no simultaneous integral solutions $x,y,s,t\in\mathbb{Z}$ to the equations  
\begin{equation}\label{eq:case3.15}
x^2-1-s^2-s^4=-(t^2+1)\qquad\text{and}\qquad y^2-1-t^2-t^4=s^2+1.
\end{equation} 
\end{lemma}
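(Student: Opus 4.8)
The plan is to establish this by a congruence obstruction modulo $8$, exactly as in Lemmas \ref{lem:case3.3}, \ref{lem:case3.9}, \ref{lem:case3.11}, and \ref{lem:case3.12}. First I would isolate the two squares, rewriting \eqref{eq:case3.15} as
\[
x^2 = s^4 + s^2 - t^2 \qquad\text{and}\qquad y^2 = t^4 + t^2 + s^2 + 2,
\]
and reduce the problem to showing the nonexistence of solutions in $\mathbb{Z}/8\mathbb{Z}$, which suffices because any integral solution reduces to one modulo $8$.

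The computation rests on a few standard residue facts: modulo $8$ the squares are $\{0,1,4\}$, the fourth powers are $\{0,1\}$, and the quantity $n^4+n^2$ depends only on $n \bmod 4$, equaling $2$ if $n$ is odd, $4$ if $n\equiv 2\pmod 4$, and $0$ if $n\equiv 0 \pmod 4$. I would first analyze the left equation: checking each class of $(s,t)$, the requirement that $s^4+s^2-t^2$ be a square modulo $8$ forces $s$ and $t$ to have the \emph{same parity} (the mixed-parity cases produce $x^2\equiv 2,3,6,$ or $7$, none of which is a square). I would then substitute into the right-hand side of the second equation. If $s,t$ are both odd, then $t^4+t^2+s^2+2 \equiv 2+1+2 \equiv 5 \pmod 8$; if $s,t$ are both even, the same expression reduces to $2$ or $6$ modulo $8$ according to the classes of $s,t$ mod $4$. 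Since none of $2,5,6$ is a square modulo $8$, the second equation is unsolvable in every class left admissible by the first, completing the argument.

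The only subtle point --- and the reason one must work modulo $8$ rather than $4$ --- is the both-odd case: modulo $4$ the system reads $x^2\equiv 1$ and $y^2\equiv 1$, which is perfectly consistent, so the obstruction is invisible at that level and only surfaces at the second $2$-adic digit. Beyond this, the proof is a finite verification over the residues of $(s,t)$ in $(\mathbb{Z}/8\mathbb{Z})^2$ (with $x,y$ then constrained), so I do not expect any genuine obstacle; the main task is simply the careful even/odd bookkeeping needed to confirm that every admissible class left open by the first equation is eliminated by the second.
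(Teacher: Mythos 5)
Your proposal is correct and is exactly the paper's approach: the paper's proof of this lemma is the one-line assertion that there are no solutions in $\mathbb{Z}/8\mathbb{Z}$, and your residue computation (same-parity forced by the first equation, then $y^2\equiv 2,5,6\pmod 8$ in all surviving cases) is an accurate spelling-out of that check. Your observation that the both-odd case survives modulo $4$ and requires the second $2$-adic digit is also consistent with the paper's choice of modulus $8$ here versus $4$ in other lemmas.
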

\begin{proof} There are no solutions $x,y,s,t\in\mathbb{Z}/8\mathbb{Z}$.  
\end{proof} 

\begin{lemma}\label{lem:case3.16} Let $x,y,s,t\in\mathbb{Z}$ be such that  
\begin{equation}\label{eq:case3.16}
x^2-1-s^2-s^4=-t^2-1\qquad\text{and}\qquad y^2-1-t^2-t^4=-(s^2+1).
\end{equation} 
Then $(x,y,s,t)=(\pm{u}^2,\pm{u^2},\pm{u},\pm{u})$ for some $u\in\mathbb{Z}$. 
\end{lemma}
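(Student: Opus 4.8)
The plan is to first clear the constant terms and isolate the squares, rewriting the two hypotheses as $x^2 = s^4 + s^2 - t^2$ and $y^2 = t^4 + t^2 - s^2$. These two equations are interchanged by the substitution $(x,y,s,t)\mapsto(y,x,t,s)$, so the system is symmetric in the pairs $(x,s)$ and $(y,t)$; I would exploit this symmetry to reduce the case analysis to a single inequality comparison.

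The heart of the argument is a Runge-style squeeze of exactly the kind used throughout this section, and here it is especially clean because no elliptic-curve computation or congruence obstruction is needed. Suppose $s^2 > t^2$. Using the first equation I would sandwich $x^2 = s^4 + s^2 - t^2$ strictly between the consecutive squares $(s^2)^2$ and $(s^2+1)^2$: on one side $x^2 - (s^2)^2 = s^2 - t^2 > 0$, and on the other $(s^2+1)^2 - x^2 = s^2 + t^2 + 1 > 0$. Hence $x^2$ lies strictly between two consecutive integer squares and cannot itself be a square, a contradiction. Applying the same estimate to $y^2 = t^4 + t^2 - s^2$ via the symmetry noted above shows that the case $t^2 > s^2$ is likewise impossible.

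Therefore $s^2 = t^2$. Substituting back gives $x^2 = s^4$ and $y^2 = t^4$, so $x = \pm s^2$ and $y = \pm t^2$ with $|s| = |t|$; writing $u = |s| = |t|$ yields exactly $(x,y,s,t) = (\pm u^2, \pm u^2, \pm u, \pm u)$, and a direct check confirms that every such sign choice is a genuine solution, since the equations depend only on $s^2$ and $t^2$. I do not anticipate a real obstacle here: the only point requiring care is that the two squeeze inequalities hold uniformly for all $s,t$ with $s^2 \neq t^2$ (the case $s=0,\,t\neq 0$ and its mirror being absorbed into the squeeze, and the degenerate subcase $s=t=0$ being precisely the $u=0$ solution), so no separate treatment of vanishing variables is required. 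This makes Lemma \ref{lem:case3.16} among the most direct cases in the classification.
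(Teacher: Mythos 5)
Your proposal is correct and follows essentially the same route as the paper: rewrite the equations as $x^2=s^4+s^2-t^2$ and $y^2=t^4+t^2-s^2$, squeeze $x^2$ strictly between the consecutive squares $(s^2)^2$ and $(s^2+1)^2$ when $s^2>t^2$, invoke the $(x,s)\leftrightarrow(y,t)$ symmetry to rule out $t^2>s^2$, and conclude $s^2=t^2$ with $x^2=s^4$, $y^2=t^4$. Your version of the squeeze (computing both gaps $s^2-t^2$ and $s^2+t^2+1$ explicitly) is if anything slightly cleaner than the paper's chain of inequalities.
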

\begin{proof}
Suppose that $s^2>t^2$. Then 
 \begin{equation*}
\begin{split}  
(s^2+1)^2&=s^4+s^2+s^2+1\\[5pt]
&>s^4+s^2\\[5pt]
&\geq s^4+(s^2-t^2)\\[5pt] 
&>s^4=(s^2)^2.
\end{split} 
\end{equation*}
Therefore, $s^4+s^2-t^2$ cannot be a square, contradicting the left side of \eqref{eq:case3.16}. Moreover, this problem is symmetric in $s$ and $t$. Hence, $t^2<s^2$ is also impossible. Thus $s^2=t^2$, from which the description above easily follows.
\end{proof} 

\subsection{Proof of the classification of exceptional pairs} As a result of Lemmas \ref{lem:case1.1} to \ref{lem:case3.16}, if $(x^2+c_1,x^2+c_2)$ is an exceptional pair with $c_1\neq c_2$, then up to reordering, either 
\[(c_1,c_2)=(-1,-3)\;\;\;\text{or}\;\;\; (c_1,c_2)=(s^2-s^4,-1-s^2-s^4)\] 
for some $s\in\mathbb{Z}$ as claimed.  

\medskip

\end{document}